\theoremstyle{definition}
\newtheorem{proposition}{Proposition}
\newtheorem{conjecture}{Conjecture}
\newtheorem{theorem}[proposition]{Theorem}
\theoremstyle{definition}
\theoremstyle{definition}
\newtheorem{definition}{Definition}
\theoremstyle{remark}
\newtheorem*{remark}{Remark}
\DeclareMathOperator\erfc{erfc}
\begin{document}

\begin{center}
\textbf{Speed of convergence of Chernoff approximations for two model examples: \\
heat equation and transport equation \\}
Pavel S. Prudnikov \\
psprudnikov@icloud.com
\vskip2mm
National Research University Higher School of Economics\\ 
Nizhny Novgorod City, Russia
\end{center}

\textbf{Abstract.} Paul Chernoff in 1968 proposed his approach to approximations of one-parameter operator semigroups while trying to give a rigorous mathematical meaning to Feynman's path integral formulation of quantum mechanics. In early 2000's Oleg Smolyanov noticed that Chernoff's theorem may be used to obtain approximations to solutions of initial-value problems for linear partial differential equations (LPDEs) of evolution type with variable coefficients, including parabolic equations, Schr\"odinger equation, and some other. Chernoff expressions are explicit formulas containing variable coefficients of LPDE and the initial condition, hence they can be used as a numerical method for solving LPDEs. However, the speed of convergence of such approximations at the present time is understudied which makes it risky to employ this class of numerical methods. 

In the present paper we take two equations with known solutions (heat equation and transport equation) and study both analytically and numerically the speed of decay of the norm of the difference between Chernoff approximations and exact solutions. We also provide graphical illustrations of convergence and its rate. These model examples, being relatively simple, allow to demonstrate general properties of Chernoff approximations. The observations obtained build a base for the future employment of the approach based on Chernoff's theorem to the problem of construction of new numerical methods for solving initial-value problem for parabolic LPDEs with variable coefficients.

\vskip2mm
\textbf{Keywords:} parabolic partial differential equation, Cauchy problem solution,  Chernoff approximations, convergence speed, estimates of convergence rate, numerical experiment

\vskip2mm
\textbf{MSC2010:} 35K15, 47D06, 65M12 
\small
\tableofcontents
\normalsize
\section{Introduction}
The Chernoff theorem \cite{chernoff_jfa_2008} is an effective tool of functional analysis which allows to construct approximations to $C_0$-semigroups that, in turn, provide solutions to some evolution equations (e.g., heat equation, parabolic equation with variable coefficients, Schr\"odinger equation, etc.); standard textbooks on the topic are \cite{engel_nagel_1999, pazy_2012, hille_1996}. If one finds so-called operator-valued Chernoff function then Chernoff's theorem generates a sequence of functions, also referred to as Chernoff approximations, that converges to the solution of a Cauchy problem for the evolution equation. In many cases constructing Chernoff function is the only method that allows to express the solution in terms of both variable coefficients of the equation and the chosen initial condition.

At the time the subject area has a decent amount of data on different methods to construct Chernoff functions. For instance, the members of Smolyanov’s group implemented the use of integral operators to find the solutions to parabolic equations in a plethora of cases during the last 15 years (refer to pioneering papers \cite{smolyanov_eeapilse_2003, smolyanov_jmp_2002}, overview \cite{smolyanov_tsa_2009, butko_2020} and some other results \cite{smolyanov_psim_2009, dubravina_jmp_2014, remizov_vedenin_2020}). The solutions obtained in the above mentioned studies were represented in the form of Feynman formula, i.e., as a limit of multiple integral when multiplicity goes to infinity. In turn, Schr\"odinger equation also belongs to the class of evolution equations, and the use of the same technique allows to represent the solution in the form of Feynman and quasi-Feynman integral formulas as well, see \cite{remizov_jfa_2016, remizov_doklady_2017}.

When dealing with expressions provided by the Chernoff theorem, one might be interested in obtaining the estimate for the rate of decay of approximation error as $n$ tends to infinity. Some results regarding this topic were already proposed for the case of Schr\"odinger equation in \cite{OSS2012}. Similarly, the question of convergence speed arises for the Trotter product formula $e^{A+B} = \lim_{n\to\infty}(e^{A/n}e^{B/n})^n$ which we do not exploit in current paper but which is a particular exemplar of Chernoff's theorem for $G(t) = e^{tA}e^{tB}$ (see recent papers \cite{zagrebnov_1, zagrebnov_2, zagrebnov_2020}). In a less specific setting, other than within the framework of Chernoff's theorem, one may refer to the results presented in \cite{GomTom2019, Gom2019, Gom2014}. 

Nevertheless, this area is not well studied at the present time in general. Starting from \cite{remizov_arxiv_2020} members of I.D.~Remizov's group examine the problem systematically. So, this paper is dedicated to the analysis of the convergence speed for some set of initial conditions and Chernoff functions. We suggest few analytic and computational methods in order to obtain the information on the order of approximation subspaces to which the examined model examples belong to as well as demonstrate the dependence of the order of the approximation on the choice of the initial condition. It is also shown on the model example that if the initial condition does not belong to the domain of the generator of the corresponding $C_0$-semigroup then the convergence speed can be slower than the one attained on the functions from the domain. 

Conducting this study implies the use of basic methods of infinite-dimensional functional analysis and operator semigroups, as well as the use of Matlab software under student license in order to perform numerical experiments and visualise the results.

\section{Preliminaries}

\subsection{$C_0$-semigroups and Chernoff's theorem}
This section is solely dedicated to the introduction of basic concepts that will be used throughout the rest of the paper. During the paper we will work with the system of equations given by

\begin{definition}
\label{def:cauchy_gen}
Let $X$ be an infinite set, and $\mathcal{F}$ be a Banach space of all number-valued functions on $X$. Moreover, assume having $dom(L)\subset\mathcal{F}$ and a closed linear operator $L\colon dom(L)\to\mathcal{F}$. Then the system of equations
\begin{equation}
\label{eq:cauchy_gen}
\begin{cases}
u'_t(t,x)=Lu(t,x),\\
u(0,x)=u_0(x),
\end{cases}
\end{equation}
where $x\in X$, $u_0\in\mathcal{F}$, and $u(t,\cdot)\in\mathcal{F}$ for all $t\geq0$, is called a Cauchy problem for the evolution equation.
\end{definition}

\begin{remark}
In case of existence of the $C_0$-semigroup $(G(t))_{t\geq0}$ with the generator $(L, dom(L))$ the solution of \eqref{eq:cauchy_gen} exists and is given by the equality $u(t,x)=(e^{tL}u_0)(x)$ for all $t\geq0$ and $x\in X$.
\end{remark}

\begin{definition}
If $u_0\in dom(L)$ then $u(t,\cdot)\in dom(L)$ for all $t\geq0$ and $u$ is called a classical solution.
\end{definition}

\begin{remark}
For an arbitrary $u_0\in\mathcal{F}$ the solution of \eqref{eq:cauchy_gen} exists only as a solution of the corresponding integral equation $u(t,\cdot)=L\int_{0}^{t}u(s,\cdot)ds+u_0$.
\end{remark}

After defining a Cauchy problem, it appears necessary to provide a common definition of a $C_0$-semigroup \cite{engel_nagel_1999}.
\begin{definition}
\label{def:semigroup}
Let $\mathcal{F}$ be a Banach space, and $\mathscr{L}(\mathcal{F})$ be a space of all linear bounded operators in $\mathcal{F}$. Let a mapping $V\colon [0, +\infty) \to \mathscr{L}(\mathcal{F})$ also be given, i.e., for a fixed $t\geq 0$ we have $V(t)$ being a linear bounded operator. Then $V$ is called a $C_0$-semigroup, or the same as strongly continuous one-parametric semigroup of linear bounded  operators, if the following conditions hold:
\begin{enumerate}
    \item $V(0) = I$, i.e., for all $\varphi\in\mathcal{F}$ we have $V(0)\varphi = \varphi$;
    \item under the action of $V$ the addition in $[0, +\infty)$ is mapped to the composition in $\mathscr{L}(\mathcal{F})$, i.e., for all $t,s\geq0$ we have $V(t+s)=V(t)\circ V(s)$, where $\circ$ denotes the composition of operators such that $(A\circ B)(\varphi) = A(B(\varphi))$ for all $\varphi\in\mathcal{F}$;
    \item $V$ is continuous in $\mathscr{L}(\mathcal{F})$ endowed with the strong operator topology, i.e., for all $\varphi\in\mathcal{F}$ a mapping $t\mapsto V(t)\varphi$ is continuous on $[0, +\infty)$.
\end{enumerate}
\end{definition}

The concept of a $C_0$-semigroup is closely related to the notion of its generator \cite{engel_nagel_1999}. So, we propose the following
\begin{definition}
\label{def:generator}
Let $(V(t))_{t\geq0}$ be a $C_0$-semigroup on a Banach space $\mathcal{F}$. Then the linear operator $L$ defined as
\begin{equation}
\label{eq:generator}
    L\varphi = \lim_{t\to +0}\frac{V(t)\varphi - \varphi}{t}
\end{equation}
on a linear subspace
\begin{equation}
\label{eq:domain}
    dom(L) = \left\{\varphi\in\mathcal{F} \;|\: \exists \lim_{t\to +0}\frac{V(t)\varphi - \varphi}{t}\right\} \subset \mathcal{F}
\end{equation}
is called an infinitesimal generator of a $C_0$-semigroup $(V(t))_{t\geq0}$. We also say that operator $L\colon dom(L) \to \mathcal{F}$ generates a $C_0$-semigroup and denote $V(t)=e^{tL}$.
\end{definition}

\begin{definition}
\label{def:closure}
Let $dom(L) \subset \mathcal{F}$, and $L\colon dom(L) \to \mathcal{F}$ be a linear operator. Then $L$ is called
\begin{itemize}
    \item \textit{closed} if its graph $\Gamma_{L} = \{(x, Lx) \in \mathcal{F}\times\mathcal{F} \;|\; x \in dom(L)\}$ is a closed subset in space $\mathcal{F}\times\mathcal{F}$ endowed with the norm $\|\cdot\|_{\mathcal{F}\times\mathcal{F}}$ given by $\|(x, y)\|_{\mathcal{F}\times\mathcal{F}} = \|x\|_{\mathcal{F}} + \|y\|_{\mathcal{F}}$ for all $x, y\in\mathcal{F}$;
    \item \textit{closable} (or admitting closure) if the closure of its graph $\overline{\Gamma_{L}}$ in space $\mathcal{F}\times\mathcal{F}$ with the norm $\|\cdot\|_{\mathcal{F}\times\mathcal{F}}$ is a graph of some operator $(\overline{L}, dom(\overline{L}))$, meaning $\Gamma_{\overline{L}} = \overline{\Gamma_{L}}$.
\end{itemize} 
\end{definition}

\begin{remark}
If operator $\overline{L}$ exists then it is a linear closed operator extending operator $L$, i.e., $dom(L) \subset dom(\overline{L})$, and $L|_{dom(L)} = \overline{L}|_{dom(L)}$.
\end{remark}

The next set of definitions is provided in the wording of I.D.~Remizov \cite{remizov_amc_2018}.
\begin{definition}
\label{def:chernoff_tan}
Let $\mathcal{F}$ be a Banach space, and $\mathscr{L}(\mathcal{F})$ be a space of all linear bounded operators in $\mathcal{F}$. Let a mapping $G\colon [0, +\infty) \to \mathscr{L}(\mathcal{F})$, or the same as a family of linear bounded operators $(G(t))_{t\geq0}$ in $\mathcal{F}$, also be given. Moreover, assume $dom(L)$ is dense in $\mathcal{F}$ and a linear operator $L\colon dom(L) \to \mathcal{F}$ is closed. Then function $G$ is called \textit{Chernoff tangent} to operator $L$ if the following conditions are satisfied:
\begin{enumerate}[(CT1).]
    \item function $G$ is strongly continuous, or the same as continuous in $\mathscr{L}(\mathcal{F})$ endowed with the strong operator topology, i.e., the mapping $t\mapsto G(t)f$ is continuous on $[0, +\infty)$ for all $f\in\mathcal{F}$;
    \item $G(0) = I$, i.e., for all $f\in\mathcal{F}$ we have $G(0)f = f$;
    \item there exists a dense linear subspace $D\subset\mathcal{F}$ such that for all $f\in D$ we have 
    $$\lim_{t\to +0}(G(t)\varphi - \varphi)/t \overset{denote}{=} G'(0)f\in\mathcal{F};$$
    \item the closure of operator $(G'(0), D)$ exists and is equal to $(L, dom(L))$.
\end{enumerate}
\end{definition}

\begin{remark}
Conditions (CT3) and (CT4) together mean that there exists a core $D$ for $L$.
\end{remark}

Classical wording of the next theorem can be found in \cite{chernoff_jfa_2008, engel_nagel_1999}. Based on the notion of Chernoff tangency we provide
\begin{theorem}[The Chernoff theorem]
\label{th:chernoff}
Let $\mathcal{F}$ be a Banach space, and $\mathscr{L}(\mathcal{F})$ be a space of all linear bounded operators in $\mathcal{F}$. Let a mapping $G\colon [0, +\infty) \to \mathscr{L}(\mathcal{F})$, or the same as a family of linear bounded operators $(G(t))_{t\geq0}$ in $\mathcal{F}$, also be given. Moreover, assume having a closed linear operator $L\colon dom(L) \subset \mathcal{F} \to \mathcal{F}$. Suppose that the following conditions are met:
\begin{itemize}
    \item [(E).] there exists a $C_0$-semigroup $(e^{tL})_{t\geq 0}$ with generator $(L, dom(L))$;
    \item [(CT).] function $G$ is Chernoff tangent to operator $L$;
    \item [(N).] there exists number $\omega\in\mathbb{R}$ such that $\|G(t)\|\leq e^{\omega t}$ for all $t\geq 0$.
\end{itemize}
Then for each $f\in\mathcal{F}$ we have $(G(t/n))^nf\xrightarrow{n\to\infty}e^{tL}f$ uniformly in $t\in[0, T]$ for any fixed $T\geq0$, i.e., for each $f\in\mathcal{F}$ and each $T\geq 0$ we get
\begin{equation}
\label{eq:chernoff}
    \lim_{n\to\infty}\sup_{t\in[0, T]}\left\|\left(G\left(\frac{t}{n}\right)\right)^{n}f-e^{tL}f\right\|=0.    
\end{equation}
\end{theorem}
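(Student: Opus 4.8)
The plan is to reduce the statement to the standard Chernoff theorem as found in \cite{chernoff_jfa_2008, engel_nagel_1999}, so the bulk of the work is verifying that the hypotheses stated here in terms of \emph{Chernoff tangency} (Definition~\ref{def:chernoff_tan}) imply the hypotheses of the classical formulation, and then transcribing the conclusion. First I would recall the classical Chernoff product formula: if $(G(t))_{t\ge 0}$ is a strongly continuous family of bounded operators with $G(0)=I$ and $\|G(t)^k\|\le Me^{\omega t k}$ for all $t\ge 0$, $k\in\mathbb{N}$, and if the strong derivative $G'(0)$ exists on a core $D$ of the generator $A$ of a $C_0$-semigroup and agrees with $A$ there, then $(G(t/n))^n\to e^{tA}$ strongly and uniformly for $t$ in compact subsets of $[0,\infty)$. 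Here condition (N) gives $\|G(t)\|\le e^{\omega t}$, hence by submultiplicativity $\|G(t)^k\|\le e^{\omega t k}$, so the stability bound holds with $M=1$; condition (CT) unpacks, via (CT1)--(CT4), into exactly: strong continuity, $G(0)=I$, existence of the strong derivative on the dense subspace $D$, and $\overline{(G'(0),D)}=(L,dom(L))$, i.e.\ $D$ is a core for $L$ (this is the content of the \textbf{Remark} following Definition~\ref{def:chernoff_tan}); and condition (E) supplies the $C_0$-semigroup $(e^{tL})_{t\ge0}$ whose generator is $L$. Thus all hypotheses of the classical theorem are met with $A=L$, and its conclusion is precisely \eqref{eq:chernoff}.

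The key steps, in order, are: (i) state the classical Chernoff theorem in the form I will cite, being careful about the uniform-on-compacts clause; (ii) check the stability estimate, deriving $\|G(t/n)^n\|\le e^{\omega t}$ for $t\in[0,T]$ from (N), which already shows the family $\{(G(t/n))^n : n\in\mathbb{N}, t\in[0,T]\}$ is uniformly bounded by $e^{\omega T}$ (or $1$ if $\omega\le0$) --- this is what lets one pass from convergence on the dense set $D$ to convergence on all of $\mathcal{F}$; (iii) verify the Chernoff-tangency hypotheses line by line against the classical derivative/core hypothesis; (iv) invoke the classical theorem to get strong convergence $(G(t/n))^n f\to e^{tL}f$ for $f\in\mathcal{F}$, uniformly in $t\in[0,T]$; (v) conclude by rewriting this as the vanishing of the supremum in \eqref{eq:chernoff}.

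The main obstacle --- really the only nontrivial point --- is the passage from convergence on the core $D$ to convergence on the whole space $\mathcal{F}$ together with uniformity in $t$; this is the heart of Chernoff's original argument and uses the uniform bound from step (ii) plus an $\varepsilon/3$ argument: given $f\in\mathcal{F}$ pick $g\in D$ with $\|f-g\|$ small, bound $\|(G(t/n))^n f - e^{tL}f\|$ by $\|(G(t/n))^n(f-g)\| + \|(G(t/n))^n g - e^{tL}g\| + \|e^{tL}(g-f)\|$, where the first and third terms are uniformly small by the stability bounds on $(G(t/n))^n$ and on the semigroup, and the middle term tends to $0$ uniformly in $t\in[0,T]$ by the estimate on $D$. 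The estimate on $D$ itself --- comparing $(G(t/n))^n g$ with $e^{tL}g$ when $Lg$ is under control --- is the computational core of Chernoff's proof (it exploits the resolvent identity and the telescoping $A^n-B^n=\sum A^k(A-B)B^{n-1-k}$), but since we are allowed to cite the classical theorem we do not need to reproduce it. I would therefore present the proof as: \emph{the hypotheses (E), (CT), (N) are a reformulation of the hypotheses of the classical Chernoff theorem with $M=1$ and $A=L$, $D$ a core for $L$; applying it yields \eqref{eq:chernoff}}, spelling out steps (ii) and (iii) explicitly and referring to \cite{chernoff_jfa_2008, engel_nagel_1999} for the remaining analytic core.
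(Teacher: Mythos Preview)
Your proposal is correct and aligns with the paper's treatment: the paper does not give any proof of this theorem, it merely restates the classical Chernoff product formula in the language of Chernoff tangency and refers the reader to \cite{chernoff_jfa_2008, engel_nagel_1999}. Your explicit verification that (E), (CT), (N) unpack into the standard hypotheses (stability with $M=1$, core condition via (CT3)--(CT4), existence of the semigroup) is more than the paper itself provides, but is exactly the right bridge to the cited references.
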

\begin{definition}
If the above conditions hold then $G$ is called a Chernoff function for $L$.
\end{definition}

\subsection{Conjectures on convergence speed for Chernoff approximations}
During the analysis of the rate of convergence both for the transport and heat equations, our aim is to check the validity of the following conjectures for several model examples of initial conditions. 

\begin{conjecture}[I. Remizov, 2018]
\label{con:1}
Let $(e^{tL})_{t\geq 0}$ be a $C_0$-semigroup in a Banach space $\mathcal{F}$ with generator $(L, dom(L))$, and $G$ be a Chernoff function for operator $L$. Moreover, assume $t_0\geq0$ and $f\in\mathcal{F}$ are given, and suppose that for all $t\in[0, t_0]$ we have $f\in dom(G'(t))\cap dom(G''(t))$ and functions $t\to G'(t)f$ and $t\to G''(t)f$ are continuous. Then there exists a number $C\geq0$ such that for all $t\in[0, t_0)$ and all $n\in\mathbb{N}$ the estimate $\left\|\left(G\left(\frac{t}{n}\right)\right)^{n}f-e^{tL}f\right\|\leq \frac{C}{n}$ holds.
\end{conjecture}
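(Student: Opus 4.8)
The plan is to establish the estimate by the standard telescoping argument for Chernoff-type approximations, comparing $(G(t/n))^n f$ with $e^{tL}f$ through the chain of intermediate operators $(G(t/n))^k e^{(n-k)t/n\,L}f$. Writing $s = t/n$, one has the telescoping identity
\begin{equation*}
\left(G(s)\right)^{n}f - e^{tL}f = \sum_{k=0}^{n-1}\left(G(s)\right)^{k}\Bigl(G(s) - e^{sL}\Bigr)e^{(n-1-k)sL}f,
\end{equation*}
so that, using the normalization bound $\|G(s)\|\le e^{\omega s}\le e^{\omega t/n}$ from (N) and the analogous growth bound for the semigroup $\|e^{rL}\|\le Me^{\omega' r}$ on $[0,t_0]$, one gets
\begin{equation*}
\left\|\left(G(s)\right)^{n}f - e^{tL}f\right\|\le n\cdot K(t_0)\cdot \sup_{0\le r\le t}\Bigl\|\bigl(G(s) - e^{sL}\bigr)e^{rL}f\Bigr\|,
\end{equation*}
where $K(t_0)$ absorbs the uniform-in-$k$ product of the exponential factors. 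Hence it suffices to show that the one-step defect obeys $\bigl\|(G(s)-e^{sL})g\bigr\| = O(s^2)$, uniformly for $g$ ranging over the relevant orbit $\{e^{rL}f : r\in[0,t]\}$; then the factor $n$ against $s^2 = t^2/n^2$ yields the desired $C/n$.

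To control the one-step defect I would apply Taylor's formula with integral remainder to the $\mathcal{F}$-valued function $\tau\mapsto G(\tau)g$ on $[0,s]$. Under the hypotheses, $g\in dom(G'(\tau))\cap dom(G''(\tau))$ with $\tau\mapsto G'(\tau)g$ and $\tau\mapsto G''(\tau)g$ continuous, so
\begin{equation*}
G(s)g = g + s\,G'(0)g + \int_{0}^{s}(s-\tau)\,G''(\tau)g\,d\tau,
\end{equation*}
and similarly $e^{sL}g = g + sLg + \int_0^s (s-\tau)L^2 e^{\tau L}g\,d\tau$ when $g\in dom(L^2)$. Since $G'(0)$ agrees with $L$ on the core $D$ and, more to the point here, on the orbit of $f$ under the semigroup (because $f$ lies in the stated domains and the Chernoff-tangency condition (CT4) identifies $\overline{G'(0)} = L$), the first-order terms cancel and the defect is bounded by $s^2$ times a supremum of $\|G''(\tau)g\|$ and $\|L^2 e^{\tau L}g\|$ over the compact parameter region. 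The continuity assumptions guarantee these suprema are finite, giving a constant depending only on $t_0$ and $f$.

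The main obstacle I anticipate is the bookkeeping needed to make the cancellation of first-order terms rigorous along the whole orbit $\{e^{rL}f\}$ rather than just at a single point: one must verify that $e^{rL}f$ stays in the domains of $G'(\cdot)$ and $G''(\cdot)$ for all $r\in[0,t_0]$ and that the relevant maps are jointly continuous in $(r,\tau)$, which is where the hypotheses of the conjecture are doing the real work (and is presumably why they are phrased in terms of $G'(t)$, $G''(t)$ rather than only $G'(0)$, $G''(0)$). A secondary technical point is handling the endpoint $t = t_0$ versus the half-open interval $[0,t_0)$ in the statement, and ensuring the constant $K(t_0)$ is genuinely uniform in $n$ — this follows from $(1+\omega t/n)^{n}\le e^{\omega t}\le e^{|\omega| t_0}$ but should be stated carefully. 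Once these continuity-and-domain facts are in hand, the estimate $C/n$ drops out of the two displayed inequalities above with $C = t_0^2\,K(t_0)\,\sup_{r,\tau}\bigl(\|G''(\tau)e^{rL}f\| + \|L^2 e^{rL}f\|\bigr)$.
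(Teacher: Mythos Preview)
The paper does not prove this statement at all: it is stated as a \emph{conjecture} (attributed to Remizov, 2018), and the paper's contribution is only to verify it numerically and analytically on a handful of explicit model examples (the translation and heat semigroups with $u_0=\sin$ or $u_0=e^{-|x|}$). So there is no ``paper's own proof'' to compare your proposal against.

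As for the proposal itself, the telescoping reduction and the Taylor-with-integral-remainder expansion are the right framework, but the argument has a genuine gap that you yourself flag and then wave away. The hypotheses give $f\in dom(G'(t))\cap dom(G''(t))$ with continuity in $t$; they say \emph{nothing} about $e^{rL}f$. Your telescoping decomposition forces you to estimate the one-step defect $(G(s)-e^{sL})g$ not at $g=f$ but at $g=e^{rL}f$ for every $r\in[0,t]$, and for that you need $e^{rL}f\in dom(G''(\tau))$ and $e^{rL}f\in dom(L^2)$ with the appropriate continuity. Neither follows from the stated assumptions: invariance of $dom(L)$ under the semigroup is standard, but invariance of $dom(G''(\tau))$ is not, and nothing in the hypotheses even places $f$ in $dom(L^2)$. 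Your sentence ``this is where the hypotheses of the conjecture are doing the real work'' is precisely backwards---the hypotheses are phrased in terms of $G'(t),G''(t)$ acting on $f$ so that the Taylor expansion of $s\mapsto G(s)f$ is available, not so that anything transfers along the orbit. The alternative telescoping (with $e^{ksL}$ on the left and $G(s)^{n-1-k}$ on the right) has the mirror problem: now you need $G(s)^j f$ to stay in the good domains. Closing either of these loops is exactly the obstruction that keeps this a conjecture rather than a theorem, and your write-up does not overcome it.
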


\begin{conjecture}[I. Remizov, 2018]
\label{con:2}
Let $(e^{tL})_{t\geq 0}$ be a $C_0$-semigroup in a Banach space $\mathcal{F}$ with generator $(L, dom(L))$, and $G$ be a Chernoff function for operator $L$. Moreover, assume number $t_0\geq0$ and $f\in\mathcal{F}$ are given, and suppose that for all $t\in[0, t_0]$ we have function $f$ being from the intersection of the domains of $G'(t)$, $G''(t)$, $G'''(t)$, $G''''(t)$, $G'(t)G''(t)$, $G'(t)^{2}G''(t)$, $G''(t)^{2}$, having that each of these operators is continuous for all $t\in[0, t_0]$. Then there exists a number $C\geq0$ such that for all $t\in[0, t_0)$ and all $n\in\mathbb{N}$ the following inequality is valid:
$$\left\|\left(G\left(\frac{t}{n}\right)\right)^{n}f-e^{tL}f+\frac{t^2}{2n}e^{tL}(L^2-G''(0))f\right\|\leq \frac{C}{n^2}.$$
\end{conjecture}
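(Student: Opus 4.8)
The plan is to obtain a second-order Taylor expansion of the one-step operator $G(t/n)$ applied to $f$, then propagate the error through the telescoping identity that compares $(G(t/n))^n$ with $(e^{(t/n)L})^n = e^{tL}$. Concretely, for $f$ in the stated intersection of domains, Taylor's formula with integral remainder gives
\begin{equation}
\label{eq:plan_taylor}
G(s)f = f + sG'(0)f + \frac{s^2}{2}G''(0)f + R_3(s)f,
\end{equation}
where $\|R_3(s)f\| \le C_3 s^3$ on $[0,t_0]$ because $G'''(t)f$ is continuous, hence bounded, there; similarly $e^{sL}f = f + sLf + \frac{s^2}{2}L^2f + O(s^3)$ using that $f\in dom(L^2)$ (which follows from the hypotheses since $(L,dom(L))$ is the closure of $(G'(0),D)$ and $G''(0)f$, $G'(0)G''(0)f$, etc.\ exist, identifying $G'(0)$ with $L$ on the relevant subspace). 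Subtracting, the first-order terms cancel and one is left with
\begin{equation}
\label{eq:plan_diff}
\bigl(G(s) - e^{sL}\bigr)f = \frac{s^2}{2}\bigl(G''(0) - L^2\bigr)f + O(s^3).
\end{equation}

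Next I would set $s = t/n$ and use the standard semigroup telescoping identity
\begin{equation}
\label{eq:plan_telescope}
(G(s))^n f - e^{tL}f = \sum_{k=0}^{n-1} (G(s))^{k}\bigl(G(s) - e^{sL}\bigr)e^{(n-1-k)sL} f.
\end{equation}
Because $e^{(n-1-k)sL}f$ again lies in $dom(L^2)$ with uniformly (in $k,n$) controlled norms of $f$, $Lf$, $L^2f$ along the orbit (the orbit of a $dom(L^2)$ vector stays in $dom(L^2)$ with $\|L^je^{\tau L}f\|$ bounded on $[0,t_0]$), we may apply \eqref{eq:plan_diff} to each summand, obtaining a main term $\frac{s^2}{2}\sum_k (G(s))^k e^{(n-1-k)sL}(G''(0)-L^2)f$ plus $n$ remainder terms each of size $O(s^3)=O(t^3/n^3)$, which sum to $O(1/n^2)$ after using the norm bound $\|G(s)^k\|\le e^{\omega ks}\le e^{\omega t_0}$ from condition (N). It then remains to show the main term equals $-\frac{t^2}{2n}e^{tL}(L^2-G''(0))f$ up to $O(1/n^2)$: replacing each $(G(s))^k e^{(n-1-k)sL}$ by $e^{ksL}e^{(n-1-k)sL} = e^{(n-1)sL}$ costs another round of telescoping, each replacement error being $O(s^2)$ applied to the fixed vector $(G''(0)-L^2)f$ (which need not be in any special domain — we only need $\|(G(s)-e^{sL})g\|=o(s)$, in fact $O(s^2)$ would require $g\in dom$, so more carefully one uses $\|(G(s)-e^{sL})g\| = o(s)$ uniformly, or better, peels the estimate so the main term uses \eqref{eq:plan_diff} only once), giving $\frac{s^2}{2}\cdot n\cdot e^{(n-1)sL}(G''(0)-L^2)f + O(1/n^2) = \frac{t^2}{2n}e^{tL}(G''(0)-L^2)f + O(1/n^2)$, which is the claimed identity.

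The delicate bookkeeping — and the step I expect to be the main obstacle — is the double telescoping in the last stage: one must show that after extracting the explicit $O(1/n)$ correction, the residual is genuinely $O(1/n^2)$ and not merely $o(1/n)$. This requires that the vector $(G''(0)-L^2)f$, though not assumed to be in $dom(L)$, still be hit by differences $G(s)^k e^{(n-1-k)sL} - e^{(n-1)sL}$ with total error $O(1/n^2)$; the resolution is exactly where the extra hypotheses of Conjecture~\ref{con:2} enter — the continuity of $G'(t)G''(t)$, $G'(t)^2G''(t)$, $G''(t)^2$ on $f$ guarantees that the third-order Taylor remainder of $G$ and the commutator-type terms arising when one moves $G(s)$ factors past $e^{sL}$ factors all act on vectors that remain in a fixed bounded set of well-behaved states along the orbit, so each of the $O(n^2)$ pairwise replacement errors is $O(s^3)=O(t^3/n^3)$ and their sum is $O(1/n^2)$. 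I would organize this by writing $G(s)^k e^{(n-1-k)sL} - e^{(n-1)sL}$ itself as a telescoping sum over $j\le k$ of $e^{(k-j-1)sL}(G(s)-e^{sL})G(s)^{j}\cdots$ wait — more cleanly, $G(s)^k - e^{ksL} = \sum_{j=0}^{k-1} e^{jsL}(G(s)-e^{sL})G(s)^{k-1-j}$, apply \eqref{eq:plan_diff} termwise, and verify the iterated operators $G'(0)$, $G''(0)$ compositions appearing are precisely those listed in the conjecture's hypotheses, so everything stays bounded; then sum the geometric-in-$k$ and linear-in-$n$ contributions. The rest — collecting the $O(t^3/n^3)$ terms, invoking (N) for the $e^{\omega t_0}$ factors, and checking continuity in $t$ of the explicitly written correction — is routine.
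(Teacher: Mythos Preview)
The paper does not contain a proof of this statement: it is explicitly labeled a \emph{Conjecture} (attributed to I.~Remizov, 2018), and the surrounding text says only that ``our aim is to check the validity of the following conjectures for several model examples of initial conditions.'' The remainder of the paper verifies the predicted rates on concrete examples (the translation and heat semigroups with $u_0=\sin$ and $u_0=e^{-|x|}$) by direct computation and numerical experiment; no general argument is offered. So there is no paper proof to compare your proposal against.

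That said, your outline is the natural strategy one would try, and it is worth flagging where it is incomplete rather than merely ``delicate.'' First, you assert that $f\in dom(L^2)$ follows from the hypotheses because $(L,dom(L))$ is the closure of $(G'(0),D)$; but the conjecture only assumes $f$ lies in the domains of the listed $t$-derivatives of $G(t)$, not in $D$ itself, and the identification $G'(0)=L$ on the relevant vectors (and $G'(0)^2=L^2$) is not automatic from Chernoff tangency---it requires knowing $f\in D$ and $G'(0)f\in D$, which is not among the stated assumptions. Second, in the inner telescoping you need $\|(G(s)-e^{sL})g\|=O(s^2)$ for $g=e^{(n-1-k)sL}(G''(0)-L^2)f$; as you note, this demands $g\in dom(L)$ (equivalently $(G''(0)-L^2)f\in dom(L)$), and nothing in the hypothesis list forces that---the operators $G'(t)G''(t)$, $G''(t)^2$, etc.\ act on $f$, not on $(G''(0)-L^2)f$. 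Your parenthetical ``wait --- more cleanly'' signals you have not closed this gap, and indeed it is exactly the obstruction that keeps the statement a conjecture: without an assumption placing $(G''(0)-L^2)f$ back in a domain where the one-step error is $O(s^2)$, the double telescoping yields only $o(1/n)$, not $O(1/n^2)$.
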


For the further convenience of the interpretation of numerical studies we provide the definition of an approximation subspace proposed in \cite{remizov_arxiv_2020}.
\begin{definition}
Let $\tau \subset [0, +\infty) = \mathbb{R}^{+}$, and map $\psi \colon \mathbb{R}^{+}\to\mathbb{R}^{+}$ be such that $\lim_{x \to +\infty}\psi(x) = 0$. Then the set $A^{\tau}_{\psi} = \{f \in \mathcal{F}\;|\; \sup_{t\in\tau}\left\|\left(G\left(\frac{t}{n}\right)\right)^{n}f-e^{tL}f\right\| = O(\psi(n)) \;\mbox{as}\; n \to \infty\}$ is called an approximation subspace of order $\psi$.
\end{definition}

\begin{remark}
The approximation subspace $A^{\tau}_{\psi}$ is indeed a linear subspace of $\mathcal{F}$.
\end{remark}

Hence, we now state the previous Conjecture 2 in the new wording in
\begin{conjecture}
Let $(e^{tL})_{t\geq 0}$ be a $C_0$-semigroup in a Banach space $\mathcal{F}$ with generator $(L, dom(L))$, and $G$ be a Chernoff function for operator $L$. More than that, assume number $t_0\geq0$ is given, and denote the intersection of the domains of $G'(t)$, $G''(t)$, $G'''(t)$, $G''''(t)$, $G'(t)G''(t)$, $G'(t)^{2}G''(t)$, $G''(t)^{2}$ for all $t\in[0, t_0]$ as $D(t_0)$. Assume that $D(t_0)$ is dense in $\mathcal{F}$ and each of these operators are continuous in $t$ on all vectors from $D(t_0)$ for each $t\in[0, t_0]$. Then function $G$ has a dense approximation subspace $A^{[0, t_0)}_{\zeta}$ of order $\zeta(n) = \frac{1}{n^2}$, and this subspace is a subset of $D(t_0)$.
\end{conjecture}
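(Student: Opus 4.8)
The plan is to derive this reformulated statement directly from Conjecture \ref{con:2}, treating the latter as the analytic engine and this final conjecture as its repackaging into the language of approximation subspaces. First I would observe that the correction term in Conjecture \ref{con:2}, namely $\frac{t^2}{2n}e^{tL}(L^2-G''(0))f$, has norm of order $1/n$ uniformly in $t\in[0,t_0]$ (since $e^{tL}$ is bounded on $[0,t_0]$ by the semigroup property and $(L^2-G''(0))f$ is a fixed vector for $f\in D(t_0)$); hence by the triangle inequality the bound from Conjecture \ref{con:2} gives, for each fixed $f\in D(t_0)$,
\begin{equation*}
\sup_{t\in[0,t_0)}\left\|\left(G\left(\tfrac{t}{n}\right)\right)^{n}f-e^{tL}f\right\|\leq \frac{C_1}{n}+\frac{C_2}{n^2}=O\!\left(\tfrac{1}{n}\right).
\end{equation*}
This alone only yields order $1/n$, so the crucial point is that the $\frac{t^2}{2n}$-term does \emph{not} genuinely obstruct order $n^{-2}$ for the vectors we care about: the reformulated conjecture asserts a dense subspace on which the error is $O(n^{-2})$, which forces us to restrict attention to those $f$ for which the offending first-order term vanishes, i.e.\ $(L^2-G''(0))f=0$.

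Accordingly, the key steps I would carry out are as follows. First, set $E=\{f\in D(t_0): G''(0)f=L^2 f\}$ and verify that $E$ is a linear subspace of $D(t_0)$; on $E$ the correction term in Conjecture \ref{con:2} vanishes identically, so the conjecture's bound collapses to
\begin{equation*}
\sup_{t\in[0,t_0)}\left\|\left(G\left(\tfrac{t}{n}\right)\right)^{n}f-e^{tL}f\right\|\leq \frac{C}{n^2},
\end{equation*}
placing $E\subset A^{[0,t_0)}_{\zeta}$ with $\zeta(n)=1/n^2$. Second, I would try to show that $E$ is dense in $\mathcal{F}$ (or at least contains a dense set), which is the decisive move: density of $D(t_0)$ is assumed, but density of the \emph{subspace where $G''(0)=L^2$} is a strictly stronger demand. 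Third, since $A^{[0,t_0)}_{\zeta}$ is a linear subspace (by the Remark preceding the statement) containing the dense set $E$, and $E\subset D(t_0)$, I would conclude both that $A^{[0,t_0)}_{\zeta}$ is dense and that the witnessing dense set lies inside $D(t_0)$, as claimed.

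The hard part will be the density of $E$, and I suspect the statement as worded is in fact too strong unless one adds the hypothesis $G''(0)=L^2$ (which is the natural compatibility condition, since both $G''(0)$ and $L^2$ are the ``second-order'' data attached to $G$ and to the semigroup). If one does \emph{not} assume $G''(0)=L^2$, then the generic vector in $D(t_0)$ carries a nonzero first-order term of size $\Theta(1/n)$, so $A^{[0,t_0)}_{\zeta}$ with $\zeta=1/n^2$ cannot be dense — it would be contained in the kernel of the bounded-below perturbation $(L^2-G''(0))$, which need not be dense. Thus my honest expectation is that a correct proof either (i) silently invokes the standard Chernoff-tangency identity $G''(0)=L^2$ that holds for the natural Chernoff functions built from the semigroup's resolvent or heat-kernel data, reducing the problem to the density of $D(t_0)$ already assumed, or (ii) must weaken the claim to order $1/n$ exactly as the previous (more cautious) formulation did. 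I would therefore first attempt to justify $G''(0)=L^2$ from the construction of $G$ in the concrete heat- and transport-equation examples of the paper, and only then assemble the three steps above; the linearity and the ``$\subset D(t_0)$'' clause are then immediate bookkeeping, while the semigroup bound on $[0,t_0]$ and the uniformity in $t$ are routine consequences of strong continuity.
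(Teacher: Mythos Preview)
The paper does not prove this statement at all: it is explicitly labeled a \emph{conjecture}, introduced with the words ``Hence, we now state the previous Conjecture~2 in the new wording,'' and no argument of any kind is offered. The paper's aim in the surrounding sections is only to test the conjectures on concrete model examples (transport and heat equations), not to establish them in general. So there is no ``paper's own proof'' against which to compare your proposal.

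That said, your analysis is more than a rewording check --- it actually exposes a real tension the paper glosses over. You correctly observe that Conjecture~2, taken at face value, yields for a generic $f\in D(t_0)$ only
\[
\left(G\!\left(\tfrac{t}{n}\right)\right)^{n}f - e^{tL}f \;=\; -\frac{t^{2}}{2n}\,e^{tL}(L^{2}-G''(0))f + O\!\left(\tfrac{1}{n^{2}}\right),
\]
so the error is $\Theta(1/n)$ unless $(L^{2}-G''(0))f=0$. Hence a \emph{dense} subspace with $O(1/n^{2})$ convergence would require the kernel of $L^{2}-G''(0)$ to be dense, which is not a consequence of the stated hypotheses. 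Your conclusion --- that the reformulation is only faithful under an additional compatibility condition such as $G''(0)=L^{2}$ on a dense set --- is a genuine observation that the paper does not make. But since neither Conjecture~2 nor Conjecture~3 is proved in the paper, your attempt to derive one from the other is, strictly speaking, a conditional argument resting on an unproven premise; it cannot serve as a proof of the statement, and the paper never intended to supply one.
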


Despite the fact that only fast converging approximations have a practical value for the research as a whole, it is worth mentioning that Chernoff functions can be likewise constructed in a way to provide an arbitrary slow convergence rate. This result is presented in the next

\begin{proposition}[I. Remizov et al., 2019]
\label{prop:1}
The convergence speed in the Chernoff theorem can be arbitrary slow. That is, if a function $w\colon[0,+\infty]\to[0,+\infty]$ such that $\lim_{x\to+\infty}w(x)=0$ is given then there exist such $X,\mathcal{F}, L, e^{tL}, G, f$ for which the equality $w(n/t)=O(\left\|\left(G\left(\frac{t}{n}\right)\right)^{n}f-e^{tL}f\right\|)$ holds for all $t\geq0$ when $n\to\infty$.
\end{proposition}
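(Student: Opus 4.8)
The plan is to realize the slow convergence already at the level of a single scalar multiplier embedded into a Banach function space, exploiting the fact that Chernoff tangency (Definition~\ref{def:chernoff_tan}) demands only a one-sided first derivative of $G$ at $0$. I would take $X=\mathbb{N}$, $\mathcal{F}$ the Banach space of bounded number-valued functions on $X$ with the sup-norm, $L\equiv 0$ with $dom(L)=\mathcal{F}$ (so the trivial semigroup $e^{tL}=I$ exists, which gives condition (E)), $f=\mathbf{1}\in\mathcal{F}$, and $G(t)\varphi:=g(t)\varphi$ for a continuous $g\colon[0,+\infty)\to[1,+\infty)$ with $g(0)=1$ and $g'(0)=0$ to be specified. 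Then $\|G(t)\|=g(t)$, $(G(t/n))^{n}f=g(t/n)^{n}f$, $e^{tL}f=f$, and the whole problem collapses to the scalar quantity $\big\|(G(t/n))^{n}f-e^{tL}f\big\|=|g(t/n)^{n}-1|$.

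The function $g$ has to be read off from $w$ so that $g(s)-1$ is comparable to $s\,w(1/s)$ near $s=0$. Since $w\to 0$ at $+\infty$, I would first pass to a convenient majorant: put $\bar w(x)=\sup_{y\ge x}w(y)$ (non-increasing, tending to $0$) and then $\hat w(x)=\int_{x-1}^{x}\bar w(y)\,dy$, which is continuous, strictly positive, satisfies $\hat w(x)\ge\bar w(x)\ge w(x)$ for all large $x$, and still tends to $0$; replacing $w$ by $\hat w$ costs nothing, because $\hat w(n/t)=O(E_n)$ forces $w(n/t)=O(E_n)$. Define $g(s)=1+s\,\hat w(1/s)$ for $s\in(0,s_0]$ with $s_0$ small, $g(0)=1$, and extend $g$ by the constant value $g(s_0)$ for $s>s_0$; then $g$ is continuous and bounded, $1\le g\le M$. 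One checks $(g(s)-1)/s=\hat w(1/s)\to 0$ as $s\to+0$, i.e. $g'(0)=0$, and the conditions making $G$ a Chernoff function for $L=0$: (CT1) from continuity of $g$; (CT2) from $g(0)=1$; (CT3)--(CT4) with $D=\mathcal{F}$ and $G'(0)=0$, whose closure is the (closed) zero operator $L$; and (N) from boundedness of $g$ together with $g(t)=1+t\,\hat w(1/t)\le 1+t\le e^{\omega t}$ for small $t$, taking $\omega$ large enough to also dominate $M$ on $[s_0,\infty)$.

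Finally I would compute the error. Fix $t>0$ and take $n\ge t/s_0$, so $t/n\le s_0$ and $n/t$ is large; writing $a_n=(t/n)\hat w(n/t)$ one has $a_n\to 0$ and $na_n=t\,\hat w(n/t)\to 0$, hence
\[
g(t/n)^{n}-1=e^{n\log(1+a_n)}-1=e^{t\hat w(n/t)(1+o(1))}-1=t\,\hat w(n/t)\,(1+o(1)).
\]
Therefore $\big\|(G(t/n))^{n}f-e^{tL}f\big\|=t\,\hat w(n/t)\,(1+o(1))\ge\tfrac{t}{2}\hat w(n/t)\ge\tfrac{t}{2}w(n/t)$ for all large $n$, which is precisely $w(n/t)=O\big(\big\|(G(t/n))^{n}f-e^{tL}f\big\|\big)$ as $n\to\infty$; for $t=0$ both sides vanish ($w(+\infty)=\lim_{x\to\infty}w(x)=0$), so the claim holds for all $t\ge 0$. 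There is no deep difficulty here: the two points needing care are that $w$ is an arbitrary function — handled by replacing it with the continuous majorant $\hat w$ — and that (N) must be arranged with one fixed $\omega$, which is the most fiddly part. It is worth remarking that this does not conflict with the $O(1/n)$ rate underlying Conjecture~\ref{con:1}: the chosen $g$ is not twice differentiable at $0$ (for $\hat w(x)\sim 1/\log x$ one gets $g''(s)\to+\infty$ as $s\to+0$), so $f\notin dom(G''(0))$ and the hypotheses of Conjecture~\ref{con:1} are simply not met.
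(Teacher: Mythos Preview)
Your argument is correct. The paper does not supply a proof of this proposition at the place it is stated (it is merely attributed to Remizov et al.), but Theorem~\ref{th:sin_slow_tr} later effectively furnishes one: there the choice is $\mathcal{F}=UC_b(\mathbb{R})$, the translation semigroup $(e^{tL}f)(x)=f(x+t)$, the Chernoff function $(G(t)f)(x)=f(x+t+t\,w(1/t))$, and $f=\sin$, yielding the exact error $2\left|\sin\!\big(\tfrac{t}{2}w(n/t)\big)\right|\sim t\,w(n/t)$.

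Your route is genuinely different and more minimal: you collapse everything to the trivial semigroup $L=0$ and scalar Chernoff functions $G(t)=g(t)I$, so the problem reduces to elementary real analysis of $g(t/n)^n-1$. Two advantages of your version: it needs no PDE or translation-semigroup background, and your passage to the continuous non-increasing majorant $\hat w$ is more careful than the paper's construction, which tacitly requires continuity of $w$ for (CT1). The paper's construction, in turn, buys something else: it exhibits slow convergence for a \emph{nontrivial} semigroup with a genuinely operator-valued (non-scalar) Chernoff function, which is closer in spirit to the transport/heat examples the paper is about, and it gives an exact closed-form error rather than an asymptotic one. One minor remark on your write-up: the $t=0$ case hinges on reading $w(+\infty)=0$, which the hypothesis $\lim_{x\to+\infty}w(x)=0$ does not literally force for a function defined on $[0,+\infty]$; this is a harmless ambiguity already present in the proposition's wording and not a defect of your proof.
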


\section{Exact formulas and estimates for convergence speed}
During this section we will propose the results on the approximation speed for some model initial conditions and both one-dimensional transport and heat equations.

\subsection{Transport equation}
As a preliminary we will commence with the notion of the translation semigroup that gives the solution for one-dimensional transport equation. The examined system of equations is stated in
\begin{definition}
The system of equations defined as
\begin{equation}
\label{eq:cauchy_tr}
\begin{cases}
u'_t(t,x)=u'_{x}(t,x),\\
u(0,x)=u_0(x),
\end{cases}
\end{equation}
where $x\in \mathbb{R}$, $u_0\in UC_b(\mathbb{R})$ for all $t\geq0$, is called a Cauchy problem for the first order partial differential equation on the real line, also referred to as the transport equation.
\end{definition}

\begin{remark}
The solution of the system \eqref{eq:cauchy_tr} and, hence, the corresponding $C_0$-semigroup is given by the formula $u(t,x)=(e^{tL}u_0)(x)=u_0(x+t)$. This $C_0$-semigroup is called the translation semigroup.
\end{remark}

First, let us verify that the generator of the translation semigroup is indeed the differentiation operator.
\begin{theorem}
\label{th:generator}
Let $(G(t))_{t\geq 0}$ be the $C_0$-semigroup given by the formula $(G(t)f)(x)=f(x+t)$ for $f\in UC_b(\mathbb{R})$ and $t\geq0$. Then its generator is differentiation operator $L=\frac{d}{dx}$, and its domain $dom(L)$ is
\begin{equation*}
    UC^{1}_b(\mathbb{R}) \overset{denote}{=} \left\{f\in UC_b(\mathbb{R}) \;|\; f'\in UC_b(\mathbb{R})\right\}.
\end{equation*}
\end{theorem}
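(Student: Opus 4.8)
The plan is to work directly from Definition~\ref{def:generator}: the generator $L$ of $(G(t))_{t\ge0}$ acts by $Lf=\lim_{t\to+0}t^{-1}(G(t)f-f)$ with the limit taken in the sup-norm of $\mathcal F=UC_b(\mathbb R)$, and $dom(L)$ is the set of $f$ for which this limit exists. The statement then amounts to two inclusions, $UC^1_b(\mathbb R)\subseteq dom(L)$ with $Lf=f'$, and $dom(L)\subseteq UC^1_b(\mathbb R)$, which I would prove separately.

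For the first inclusion, I would start from the elementary identity
\begin{equation*}
\frac{f(x+t)-f(x)}{t}-f'(x)=\frac1t\int_0^t\bigl(f'(x+s)-f'(x)\bigr)\,ds,
\end{equation*}
valid for every $x\in\mathbb R$ and $t>0$ by the fundamental theorem of calculus applied to the $C^1$ function $f$. Taking the supremum over $x$ bounds the left-hand side by $\sup_{0\le s\le t}\|f'(\cdot+s)-f'(\cdot)\|_{\infty}$, and this quantity tends to $0$ as $t\to+0$ because $f'$ is uniformly continuous. Hence the difference quotient converges in $UC_b(\mathbb R)$ to $f'$, so $f\in dom(L)$ and $Lf=f'$; boundedness of $f'$ also guarantees $f'\in\mathcal F$.

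For the reverse inclusion, let $f\in dom(L)$ and put $g:=Lf$, which lies in $\mathcal F=UC_b(\mathbb R)$ by the definition of the generator. The standard differentiability property of $C_0$-semigroups on their domain (\cite{engel_nagel_1999}) gives $\tfrac{d}{dt}G(t)f=G(t)Lf$, and integrating from $0$ to $t$ yields the pointwise representation
\begin{equation*}
f(x+t)-f(x)=\int_0^t g(x+s)\,ds\qquad\text{for all }x\in\mathbb R,\ t\ge0,
\end{equation*}
and shifting the base point turns this into the same identity for $t<0$. Since $g$ is continuous, the right-hand side is a $C^1$ function of $t$, hence so is $t\mapsto f(x+t)$, and differentiating at $t=0$ gives $f'(x)=g(x)$. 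Thus $f$ is differentiable with $f'=g\in UC_b(\mathbb R)$, i.e. $f\in UC^1_b(\mathbb R)$.

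The only genuinely delicate point is this second inclusion: \emph{a priori} membership in $dom(L)$ only provides a one-sided difference-quotient limit in sup-norm, and one must upgrade it to honest two-sided differentiability of $f$ with a continuous derivative. I would resolve this through the integral representation above (equivalently, one could invoke the real-analysis fact that a continuous function whose right derivative exists everywhere and is continuous is automatically $C^1$). A minor preliminary, if one wants the argument self-contained, is to note that $(G(t))_{t\ge0}$ really is a $C_0$-semigroup on $UC_b(\mathbb R)$: the semigroup laws are immediate and strong continuity at $0$ is exactly the uniform continuity of elements of $UC_b(\mathbb R)$, which is precisely why this space, rather than $C_b(\mathbb R)$, is the correct ambient space.
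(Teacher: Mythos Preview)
Your proof is correct and complete. Both you and the paper split the argument into the two inclusions, but the methods differ in detail. For $UC^1_b(\mathbb R)\subseteq dom(L)$ the paper uses the mean value theorem where you use the integral form of the remainder; these are interchangeable. The real divergence is in the harder direction $dom(L)\subseteq UC^1_b(\mathbb R)$: the paper stays entirely elementary, observing that the one-sided difference quotients are uniformly continuous and converge uniformly, so the limit $Lf$ lies in $UC_b(\mathbb R)$ (boundedness by a separate $\varepsilon$-argument), and then shows by a direct estimate exploiting the uniform continuity of $Lf$ that the left-hand difference quotient has the same limit. You instead invoke the general semigroup fact $\tfrac{d}{dt}G(t)f=G(t)Lf$ from \cite{engel_nagel_1999} and integrate to obtain $f(x+t)-f(x)=\int_0^t (Lf)(x+s)\,ds$, from which two-sided differentiability and $f'=Lf\in UC_b(\mathbb R)$ drop out at once. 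Your route is shorter and is the standard semigroup-theoretic argument; the paper's route is more self-contained, never appealing to results beyond Definition~\ref{def:generator} and basic real analysis.
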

\begin{proof}
Using Definition \ref{def:generator} of the generator of a $C_0$-semigroup we get
\begin{equation}
\label{eq:gen_limit}
    dom(L)=\left\{f\in UC_b(\mathbb{R}) \;|\; \lim_{t\to +0}\left\|\frac{G(t)f-f}{t}-Lf\right\|=0\right\},
\end{equation}
where $\|\cdot\|$ denotes the standard supremum norm in $UC_b(\mathbb{R})$. Applying the definition of the $C_0$-semigroup, we obtain
\begin{equation}
\label{eq:uni_diff}
    \lim_{t\to+0}\sup_{x\in\mathbb{R}}\left|\frac{f(x+t)-f(x)}{t}-f'(x)\right| = 0.    
\end{equation}
Now, let us show that $f'$ is uniformly continuous. Take a sequence $(t_n)_{n\in\mathbb{N}}$ such that $\lim_{n\to\infty}t_n=0$ and $t_n\geq0$ for all $n\in\mathbb{N}$. Then it follows that $\frac{f(x+t_n)-f(x)}{t_n}\xrightarrow{n\to\infty}f'(x)$ uniformly in $UC_b(\mathbb{R})$ norm for some $x\in\mathbb{R}$. Since functions given by the formula $\frac{f(x+t_n)-f(x)}{t_n}$ are uniformly continuous for all $n\in\mathbb{N}$, by Weierstrass theorem $f'$ is also uniformly continuous. 

Next, we will show that $f'$ is bounded. By the definition of uniform continuity we have that 
\begin{equation*}
    \forall \varepsilon > 0 \; \exists\, N = N(\varepsilon) > 0 \:\colon \forall n \geq N \; \forall x\in\mathbb{R} \implies \sup_{x \in \mathbb{R}}\left|\frac{f(x+t_n)-f(x)}{t_n}-f'(x)\right|<\varepsilon.
\end{equation*}
So we get $\left|f'(x)\right|<\left|\frac{f(x+t_n)-f(x)}{t_n}\right|+\varepsilon$. Take $\varepsilon_0=1$ then $f'$ is bounded. 

Finally, let us show that if $f'\in UC_b(\mathbb{R})$ then $\lim_{t\to-0}\sup_{x\in\mathbb{R}}\left|\frac{f(x+t)-f(x)}{t}-f'(x)\right| = 0$. For all $t<0$ we have
\begin{equation*}
\begin{split}
    \left|\frac{f(x+t)-f(x)}{t}-f'(x)\right| & = \left|\frac{f(x-|t|)-f(x)}{-|t|}-f'(x)\right| \\
    & = \left|\frac{f(x)-f(x-|t|)}{|t|}-f'(x)\right| \\
    &\leq \left|\frac{f((x-|t|)+|t|)-f(x-|t|)}{|t|}-f'(x-|t|)\right|+\left|f'(x-|t|)-f'(x)\right|.
\end{split}    
\end{equation*}
For small $|t|$ the former term is small due to \eqref{eq:uni_diff}, and the latter one due to $f'$ being uniformly continuous. So, we get that
\begin{equation*}
    \lim_{t\to+0}\frac{f(x+t)-f(x)}{t} = f'(x) = \lim_{t\to-0}\frac{f(x+t)-f(x)}{t}.
\end{equation*}
Hence, if the limit given by \eqref{eq:gen_limit} exists, meaning $f\in dom(L)$, then $f\in UC^{1}_b(\mathbb{R})$. On the other hand, if $f\in UC^{1}_b(\mathbb{R})$ then by mean value theorem
\begin{equation*}
    \left|\frac{f(x+t_n)-f(x)}{t_n}-f'(x)\right| = \left|f'(x+\theta)-f'(x)\right| < \varepsilon \;\mbox{for}\; |t| < \delta \;\mbox{and}\; 0<\theta<t_n,
\end{equation*}
i.e., $f \in dom(L)$. So, we proved that $L$ is generator of this semigroup and $dom(L)=UC^{1}_b(\mathbb{R})$.
\end{proof}

Prior to the numerical studies we will find the composition degree of some Chernoff functions $G$ for an arbitrary initial condition $u_0\in UC_b(\mathbb{R})$.
\begin{proposition}
\label{prop:comp_degree_tr}
Let $G$ be the Chernoff function such that for all $f\in UC_b(\mathbb{R})$ we have
\begin{equation}
\label{eq:tr_apprx}
    (G(t)f)(x)=f(x+t+at^{k+1})
\end{equation}
for some fixed $a, k > 0$. Then its $n$-th composition degree is given by the formula
\begin{equation}
\label{eq:comp_degree_tr}
    \left(\left(G\left(\frac{t}{n}\right)\right)^{n}u_0\right)(x)=u_0\left(x+t+\frac{at^{k+1}}{n^k}\right).
\end{equation}
\end{proposition}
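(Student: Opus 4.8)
The plan is to notice that the operator $G(t)$ is simply a translation, and that translations compose by adding their shifts. Introduce, for $s\in\mathbb{R}$, the operator $(T_s f)(x)=f(x+s)$; then by \eqref{eq:tr_apprx} we have $G(t)=T_{\varphi(t)}$ with $\varphi(t):=t+at^{k+1}$. The elementary identity $(T_s\circ T_r f)(x)=(T_r f)(x+s)=f(x+s+r)$ shows $T_s\circ T_r=T_{s+r}$, and iterating gives $T_s^{\,n}=T_{ns}$ for every $n\in\mathbb{N}$ and every $s\in\mathbb{R}$.

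Applying this with $s=\varphi(t/n)$ yields $\big(G(t/n)\big)^n=T_{\varphi(t/n)}^{\,n}=T_{n\varphi(t/n)}$, hence $\big(\big(G(t/n)\big)^n u_0\big)(x)=u_0\big(x+n\varphi(t/n)\big)$. It then remains to simplify the total shift: $n\varphi(t/n)=n\big(\tfrac{t}{n}+a(\tfrac{t}{n})^{k+1}\big)=t+na\tfrac{t^{k+1}}{n^{k+1}}=t+\tfrac{at^{k+1}}{n^{k}}$, which is exactly \eqref{eq:comp_degree_tr}.

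Equivalently one could argue by induction on the number of factors, keeping the step $s=t/n$ fixed: the base case is \eqref{eq:tr_apprx}, and if $\big(G(s)\big)^{m}u_0(x)=u_0(x+m\varphi(s))$, then composing with one more factor $G(s)=T_{\varphi(s)}$ adds $\varphi(s)$ to the shift, giving $\big(G(s)\big)^{m+1}u_0(x)=u_0(x+(m+1)\varphi(s))$; setting $m=n$ and $s=t/n$ and simplifying the shift as above concludes. The only point requiring a little care is purely bookkeeping: inside $\big(G(t/n)\big)^n$ every factor carries the same argument $t/n$, so one should not attempt to run the recursion in $n$ with argument $t/(n-1)$ — treating $n$ as fixed and the step $t/n$ as the variable avoids this entirely. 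I do not expect any genuine obstacle here; once the translation/additivity observation is made the statement is a one-line computation.
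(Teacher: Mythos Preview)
Your proof is correct and follows essentially the same approach as the paper: recognise $G(t)$ as a translation by $\varphi(t)=t+at^{k+1}$, use additivity of translations to get a total shift of $n\varphi(t/n)$, and simplify. The paper compresses this into a single displayed line, while you spell out the translation-operator formalism and an optional induction, but there is no substantive difference.
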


\begin{proof}
By the definition of Chernoff function we get
\begin{equation*}
    \left(\left(G\left(\frac{t}{n}\right)\right)^{n}u_0\right)(x) = u_0\left(x+n\left(\frac{t}{n}+a\left(\frac{t}{n}\right)^{k+1}\right)\right) = u_0\left(x+t+\frac{at^{k+1}}{n^k}\right)
\end{equation*}
for all $x\in\mathbb{R}$ and $t \geq 0$.
\end{proof}

\subsubsection{Analysis of convergence speed for \boldmath{$u_0 = [x\to\sin(x)]$}}
Now, let us check the convergence speed for $u_0=[x\mapsto\sin(x)]$ and Chernoff function \eqref{eq:tr_apprx}, the prototype of which was proposed in \cite{remizov_arxiv_2020}. The main result of this paragraph is presented in the following
\begin{theorem}
\label{th:sin_tr}
Let $(e^{tL})_{t\geq0}$ be the translation semigroup on the real line, and $G$ be the Chernoff function \eqref{eq:tr_apprx} for some fixed $a, k > 0$. Then function $u_0 = [x\mapsto\sin(x)]$ belongs to the approximation subspace of order $\frac{1}{n^{k}}$, and the error is given by the formula \begin{equation*}
    \left\|\left(G\left(\frac{t}{n}\right)\right)^{n}u_0-e^{tL}u_0\right\| = 2\left|\sin(at^{k+1}/2n^{k})\right|.
\end{equation*}
\end{theorem}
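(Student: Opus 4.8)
The plan is to reduce the whole statement to one elementary trigonometric identity, exploiting that here both the Chernoff iterate and the exact solution are available in closed form. First I would apply Proposition~\ref{prop:comp_degree_tr} with $u_0=[x\mapsto\sin(x)]$ and the Chernoff function \eqref{eq:tr_apprx}: it gives
\[
\left(\left(G\left(\frac{t}{n}\right)\right)^{n}u_0\right)(x)=\sin\!\left(x+t+\frac{at^{k+1}}{n^{k}}\right),
\]
while the remark on the translation semigroup gives $(e^{tL}u_0)(x)=\sin(x+t)$. Abbreviating $\delta_{n}(t)=at^{k+1}/n^{k}$, the pointwise discrepancy is $\sin\!\bigl(x+t+\delta_{n}(t)\bigr)-\sin(x+t)$.

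Next I would invoke the sum-to-product formula $\sin\alpha-\sin\beta=2\cos\frac{\alpha+\beta}{2}\sin\frac{\alpha-\beta}{2}$ with $\alpha=x+t+\delta_{n}(t)$ and $\beta=x+t$, which yields
\[
\left(\left(G\left(\tfrac{t}{n}\right)\right)^{n}u_0\right)(x)-(e^{tL}u_0)(x)=2\cos\!\left(x+t+\tfrac{\delta_{n}(t)}{2}\right)\sin\!\left(\tfrac{\delta_{n}(t)}{2}\right).
\]
Taking the supremum over $x\in\mathbb{R}$ in the sup-norm of $UC_b(\mathbb{R})$ with $t$ held fixed, the factor $\sin(\delta_{n}(t)/2)$ is independent of $x$, and the argument $x+t+\delta_{n}(t)/2$ runs over all of $\mathbb{R}$, so $\sup_{x\in\mathbb{R}}\bigl|\cos(x+t+\delta_{n}(t)/2)\bigr|=1$. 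Hence the error norm equals exactly $2\bigl|\sin(at^{k+1}/(2n^{k}))\bigr|$, which is the asserted formula.

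For the statement about the approximation subspace I would restrict $t$ to a bounded set $\tau=[0,t_{0})$ and use $|\sin y|\le|y|$, so that
\[
\sup_{t\in\tau}\left\|\left(G\left(\tfrac{t}{n}\right)\right)^{n}u_0-e^{tL}u_0\right\|\le\sup_{t\in\tau}\frac{at^{k+1}}{n^{k}}=\frac{a\,t_{0}^{k+1}}{n^{k}}=O\!\left(\frac{1}{n^{k}}\right)\quad\text{as }n\to\infty,
\]
so that $u_0$ lies in the approximation subspace $A^{\tau}_{1/n^{k}}$ of order $1/n^{k}$. I do not expect a genuine obstacle; the only points needing a word of care are that the supremum over $x$ is actually attained (because $x$ ranges over all of $\mathbb{R}$, so the cosine factor really reaches $\pm1$) and that the $O(1/n^{k})$ bound is uniform only once $t$ is confined to a bounded interval — indeed, over all $t\ge0$ the quantity $2|\sin(at^{k+1}/(2n^{k}))|$ does not tend to $0$ as $n\to\infty$, so boundedness of $\tau$ is essential.
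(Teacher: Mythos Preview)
Your argument is correct and actually cleaner than the paper's. Both proofs start from the closed form $\sin(x+t+\tau)-\sin(x+t)$ with $\tau=at^{k+1}/n^{k}$, but the paper expands via angle-addition into $A\sin x+B\cos x$, differentiates in $x$ to locate the extremum, and then evaluates there; you bypass all of that by applying the sum-to-product identity once, which immediately factors out the $x$-dependence into a single cosine whose supremum is trivially $1$. The payoff of your route is brevity and transparency, while the paper's calculus approach is more roundabout (and in fact contains a slip: it writes the half-angles $t+\tau/2$ and $\tau/2$ as $t+\tau$ and $\tau$, arriving at $2|\sin(\tau)|$ rather than the $2|\sin(\tau/2)|$ stated in the theorem --- your computation matches the theorem statement). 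Your closing remark that the $O(1/n^{k})$ bound requires $t$ to lie in a bounded interval is also a useful clarification that the paper leaves implicit.
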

\begin{proof}

Denote $\xi(n) = \left\|\left(G\left(\frac{t}{n}\right)\right)^{n}u_0-e^{tL}u_0\right\|$ and $\tau = at^{k+1}/n^{k}$. Applying the definition of a supremum norm in $UC_b(\mathbb{R})$ and using \eqref{eq:chernoff}, we get
\begin{equation*}
\begin{split}
    \xi(n) & = \sup_{x\in\mathbb{R}}\left|\sin(x+t+\tau)-\sin(x+t)\right| \\
    & = \sup_{x\in\mathbb{R}}\left|(\sin(x)\cos(t+\tau)+\cos(x)\sin(t+\tau))-(\sin(x)\cos(t)+\cos(x)\sin(t))\right| \\
    & = \sup_{x\in\mathbb{R}}\left|(\cos(t+\tau)-\cos(t))\sin(x) + (\sin(t+\tau)-\sin(t))\cos(x)\right|. \\
\end{split}
\end{equation*}
The above supremum can be found based on the following assumption: as $\sin$ and $\cos$ are bounded functions then the value of the supremum is reached in an extremum of the internal function. Let us now find an extreme point. We have that
\begin{equation*}
    (\cos(t+\tau)-\cos(t))\cos(x) - (\sin(t+\tau)-\sin(t))\sin(x) = 0, 
\end{equation*}
and
\begin{equation*}
    \tan(x)=\frac{\cos(t+\tau)-\cos(t)}{\sin(t+\tau)-\sin(t)}=\frac{-2\sin(t+\tau)\sin(\tau)}{2\cos(t+\tau)\sin(\tau)}=\tan(-(t+\tau)),
\end{equation*}
thus meaning that the extremum is reached at the point $x=-(t+\tau)$ regardless of the period of $\tan$. So, we get
\begin{equation*}
\begin{split}
    \xi(n) & = \left|(\sin(t+\tau)-\sin(t))\cos(t+\tau)-(\cos(t+\tau)-\cos(t))\sin(t+\tau)\right| \\
    & = \left|2\cos(t+\tau)\cos(t+\tau)\sin(\tau)+2\sin(t+\tau)\sin(t+\tau)\sin(\tau)\right| \\
    & = 2\left|(\cos^2(t+\tau)+\sin^2(t+\tau))\sin(\tau)\right| \\
    & = 2\left|\sin(\tau)\right|. \\
\end{split}
\end{equation*}
Now, using Taylor expansion $\sin(z)=z-\frac{z^3}{3!}+O(z^5)$ when $z \to 0$, we finally get 
\begin{equation*}
    \left\|\left(G\left(\frac{t}{n}\right)\right)^{n}u_0-e^{tL}u_0\right\| = 2\sin(\tau)=\frac{at^{k+1}}{n^{k}}+O\left(\frac{1}{n^{k+1}}\right) \; \mbox{when} \; n\to \infty,
\end{equation*}
which proves that $u_0 = [x\mapsto\sin(x)]$ belongs to an approximation subspace of order $\frac{1}{n^k}$.
\end{proof}

\subsubsection{Analysis of slow converging approximations for \boldmath{$u_0 = [x\to\sin(x)]$}}
At last, we will generalize Theorem \ref{th:sin_tr} for an arbitrary function $w\colon\mathbb{R}^{+}\to\mathbb{R}^{+}$.
\begin{theorem}
\label{th:sin_slow_tr}
Let $(e^{tL})_{t\geq0}$ be the translation semigroup on the real line, and $G$ be the Chernoff function such that for all $f\in UC_b(\mathbb{R})$ we have 
\begin{equation}
\label{eq:tr_apprx_slow}
    (G(t)f)(x)=f(x+t+t\cdot w(1/t)),
\end{equation}
where $w\colon\mathbb{R}^{+}\to\mathbb{R}^{+}$ such that $\lim_{n\to\infty} w(n) = 0$. Then function $u_0 =[x \mapsto \sin(x)]$ belongs to the approximation subspace of order $w$, and the error is given by the formula 
\begin{equation*}
    \left\|\left(G\left(\frac{t}{n}\right)\right)^{n}u_0-e^{tL}u_0\right\| = 2\left|\sin(t/2\cdot w(n/t))\right|.
\end{equation*}
\end{theorem}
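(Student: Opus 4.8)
The plan is to follow, almost verbatim, the proof of Theorem~\ref{th:sin_tr}: the operator in \eqref{eq:tr_apprx_slow} is again a pure shift, and the only change from \eqref{eq:tr_apprx} is that the ``extra'' displacement $at^{k+1}/n^{k}$ is replaced by $t\cdot w(n/t)$.

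First I would compute the $n$-th composition degree of $G$, exactly as in Proposition~\ref{prop:comp_degree_tr}. Since $(G(s)f)(x)=f\big(x+s+s\,w(1/s)\big)$ is a translation, composing $n$ copies of $G(t/n)$ simply adds the shifts:
\begin{equation*}
    \left(\left(G\left(\tfrac{t}{n}\right)\right)^{n}u_0\right)(x)
    = u_0\!\left(x+n\Big(\tfrac{t}{n}+\tfrac{t}{n}\,w(n/t)\Big)\right)
    = u_0\big(x+t+t\,w(n/t)\big).
\end{equation*}
Writing $\tau=t\cdot w(n/t)$ and recalling $e^{tL}u_0=[x\mapsto\sin(x+t)]$, the error reduces to $\xi(n)=\sup_{x\in\mathbb{R}}\big|\sin(x+t+\tau)-\sin(x+t)\big|$, which is exactly the supremum appearing in the proof of Theorem~\ref{th:sin_tr}, now with this new $\tau$.

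Second I would evaluate that supremum. The cleanest way is the sum-to-product identity $\sin(x+t+\tau)-\sin(x+t)=2\cos\!\big(x+t+\tfrac{\tau}{2}\big)\sin\!\big(\tfrac{\tau}{2}\big)$, which gives $\sup_{x\in\mathbb{R}}\big|\sin(x+t+\tau)-\sin(x+t)\big|=2\big|\sin(\tau/2)\big|=2\big|\sin\!\big(\tfrac{t}{2}\,w(n/t)\big)\big|$, the asserted formula; alternatively one repeats the extremum computation of Theorem~\ref{th:sin_tr}, which locates the maximiser at $x=-(t+\tau/2)$. For the order statement, note that for fixed $t>0$ we have $n/t\to\infty$, hence $w(n/t)\to0$ and $\tau\to0$; then $2\sin(\tau/2)=\tau+O(\tau^{3})$ together with $|\sin z|\le|z|$ yields $\xi(n)=t\,w(n/t)+O\big(w(n/t)^{3}\big)$, so the error is of order $w$ (on a bounded set of $t$'s the prefactor $t$ is bounded and the shift $n\mapsto n/t$ in the argument is absorbed into the order symbol just as in Theorem~\ref{th:sin_tr}).

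I do not anticipate a real obstacle, since all the analytic substance is already contained in Theorem~\ref{th:sin_tr}. The only delicate point is the precise reading of ``approximation subspace of order $w$'' once $\sup_{t}$ is taken: one has to note that $t$ ranges over a bounded set and that passing from $w(n)$ to $w(n/t)$ does not alter the asymptotic order, which is where one implicitly assumes the $w$'s of interest are monotone (or regularly varying), and where — by comparison with Proposition~\ref{prop:1} — one sees that this family $G$ actually realises an arbitrarily slow rate. It is also worth recording once that $G$ from \eqref{eq:tr_apprx_slow} is a genuine Chernoff function for $L=\tfrac{d}{dx}$, so that the notion of approximation subspace applies, by verifying (CT1)--(CT4) and (N) exactly as for \eqref{eq:tr_apprx}.
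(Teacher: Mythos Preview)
Your proposal is correct and follows essentially the same approach as the paper: compute the $n$-fold composition as a single shift, set $\tau=t\cdot w(n/t)$, reduce to the supremum already handled in Theorem~\ref{th:sin_tr}, and then Taylor-expand. Your use of the sum-to-product identity (and the maximiser $x=-(t+\tau/2)$) is in fact a cleaner route to the value $2|\sin(\tau/2)|$ than the extremum computation the paper refers back to, but the overall strategy and the final estimate coincide.
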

\begin{proof}
Similarly to Proposition \ref{prop:comp_degree_tr} we first obtain $n$-th composition degree for Chernoff function $G$. We get
\begin{equation}
\label{eq:5}
    \left(\left(G\left(\frac{t}{n}\right)\right)^{n}u_0\right)(x)=u_0\left(x+n\left(\frac{t}{n} + \frac{t}{n}\cdot w(n/t)\right)\right)=u_0\left(x+t+w(n/t)t\right).
\end{equation}
It now suffices to set $\tau = t\cdot w(n/t)$ and use the same technique as of Theorem \ref{th:sin_tr}. Hence, since $w(n) \xrightarrow{n\to \infty} 0$, we have
\begin{equation*}
    \xi(n) = 2\left|\sin(t/2\cdot w(n/t))\right|=\left|t\cdot w(n/t)+o\left(w^{2}(n)\right)\right| \;\mbox{as}\; n \to \infty,
\end{equation*}
which proves that $u_0 =[x \mapsto \sin(x)]$ belongs to an approximation subspace of order $w$.
\end{proof}

Later in Section 4 we will check the convergence speed attained for the  slow converging Chernoff function $G$ given by \eqref{eq:tr_apprx_slow} with $w(t)=\frac{1}{t^{\gamma}}$ for different values of $0 < \gamma < 1$ applied to approximate the solution for $u_0 =[x \mapsto \sin(x)]$.

\subsection{Heat equation}

The next differential equation we are going to analyse is the heat equation. Now, we will state the corresponding Cauchy problem and give an explicit formula for the solution in a form of a $C_0$-semigroup.

\begin{definition}
\label{def:cauchy_heat}
The system of equations defined the following way
\begin{equation}
\label{eq:cauchy_heat}
\begin{cases}
u'_t(t,x)=Lu(t,x)=a^2u''_{xx}(t,x),\\
u(0,x)=u_0(x),
\end{cases}
\end{equation}
where $a>0$, $x\in \mathbb{R}$, and $u_0\in UC_b(\mathbb{R})$ for all $t\geq0$, is called a Cauchy problem for the heat equation on the real line with constant coefficient of thermal conductivity.
\end{definition}

\begin{remark}
The solution of the system \eqref{eq:cauchy_heat}, and, hence, the corresponding $C_0$-semigroup, is given by the Poisson integral 
\begin{equation}
\label{eq:poisson}
u(t,x)=(e^{tL}u_0)(x)=\int_{\mathbb{R}}\Phi(t, x - y)u_0(y)dy,
\end{equation}
where 
\begin{equation}
\label{eq:poisson_integral}
\Phi(t, x) = \frac{1}{2a\sqrt{\pi t}}\exp\left(-\frac{x^2}{4a^2t}\right).
\end{equation}
\end{remark}

\bigskip
We will analyse the performance of approximations for the following three Chernoff functions:
\begin{equation}
\label{eq:chernoff_heat_first}
    \left(G(t)u_0\right)(x) = \frac{1}{4}u_0(x+2a\sqrt{t}) + \frac{1}{4}u_0(x-2a\sqrt{t})+ \frac{1}{2}u_0(x)
\end{equation}
proposed by I.~Remizov in \cite{remizov_amc_2018},
\begin{equation}
\label{eq:chernoff_heat_second}
    (G(t)u_0)(x)=\frac{1}{6}u_0(x+a\sqrt{6t})+\frac{1}{6}u_0(x-a\sqrt{6t})+\frac{2}{3}u_0(x)
\end{equation}
proposed by A.~Vedenin in \cite{remizov_arxiv_2020}, and
\begin{equation}
\label{eq:chernoff_heat_third}
    (G(t)u_0)(x)=\frac{1}{30}u_0(x+a\sqrt{12t})+\frac{1}{30}u_0(x-a\sqrt{12t})+\frac{3}{10}u_0(x+a\sqrt{2t})+\frac{3}{10}u_0(x-a\sqrt{2t})+\frac{1}{3}u_0(x)
\end{equation}
also proposed by A.~Vedenin in oral communication to the author of the present paper in 2020. To simplify the complexity of computational algorithms one might need a general formula for the $n$-th composition degree for the above Chernoff functions. So we present the following
\begin{proposition}
\label{prop:chernoff_heat_first_binomial}
Let $G$ be the Chernoff function given by \eqref{eq:chernoff_heat_first} for all $f\in UC_b(\mathbb{R})$ and fixed $a>0$. Then its $n$-th composition degree is given by the formula
\begin{equation}
\label{eq:comp_degree_heat_first}
    \left(\left(G\left(\frac{t}{n}\right)\right)^{n}u_0\right)(x) = \frac{1}{4^n}\sum^{n}_{p = -n}\alpha_{p, n}u_{0}(x+2ap\sqrt{t/n}),
\end{equation} 
where for each $n\in\mathbb{N}$ and $p \in \{-n, \dots, n\}$ coefficients $\alpha_{p, n}$ are defined as
\begin{equation*}
    \alpha_{p, n} = \sum^{\left[\frac{n-|p|}{2}\right]}_{k = 0}C^{n}_{k}C^{k+|p|}_{n - k}2^{n-|p|-2k}.
\end{equation*}
\end{proposition}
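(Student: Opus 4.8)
The plan is to read the right-hand side of \eqref{eq:chernoff_heat_first} as the action of a translation-invariant averaging operator and to expand its $n$-th power by the multinomial theorem. Write $h_n = 2a\sqrt{t/n}$ and let $T$ be the shift operator $(Tf)(x) = f(x+h_n)$, so that $(T^{j}f)(x) = f(x+jh_n)$ for every $j\in\mathbb{Z}$; then \eqref{eq:chernoff_heat_first} reads $G(t/n) = \tfrac14\bigl(T + 2I + T^{-1}\bigr)$. Since $T$, $I$ and $T^{-1}$ commute, the multinomial theorem gives
\[\left(G\left(\tfrac{t}{n}\right)\right)^{n} = \frac{1}{4^{n}}\bigl(T + 2I + T^{-1}\bigr)^{n} = \frac{1}{4^{n}}\sum_{\substack{i,j,l\ge 0\\ i+j+l=n}}\frac{n!}{i!\,j!\,l!}\,2^{\,j}\,T^{\,i-l},\]
where $i$, $j$, $l$ count the occurrences of $T$, $I$, $T^{-1}$ respectively and one uses $i+j+l=n$ to collapse the scalar prefactor $(\tfrac14)^{i}(\tfrac12)^{j}(\tfrac14)^{l}$ to $2^{\,j}/4^{n}$ (equivalently $2i+j+2l = 2n-j$). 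Alternatively one can argue by induction on $n$: separating the step $h_n$ as above reduces matters to the identity $(T + 2I + T^{-1})^{n} = \sum_{p}\alpha_{p,n}T^{p}$, whose inductive step is convolution of the coefficient array with the mask $(1,2,1)$, i.e.\ $\alpha_{p,n+1} = \alpha_{p-1,n} + 2\alpha_{p,n} + \alpha_{p+1,n}$.

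Next I would collect the terms according to the net shift $p := i-l$, which ranges over $\{-n,\dots,n\}$. The bijection $(i,j,l)\mapsto(l,j,i)$ of the index set shows $\alpha_{p,n} = \alpha_{-p,n}$, so it suffices to handle $p\ge 0$ and then write $|p|$. For $p\ge 0$ put $k := l$; then $i = k+p$ and $j = n-2k-p$, and the constraints $i,j,l\ge 0$ amount exactly to $0\le k\le\bigl[(n-p)/2\bigr]$. Hence the coefficient of $(T^{p}u_0)(x) = u_0(x + 2ap\sqrt{t/n})$ equals
\[\alpha_{p,n} = \sum_{k=0}^{[(n-|p|)/2]}\frac{n!}{k!\,(k+|p|)!\,(n-2k-|p|)!}\,2^{\,n-2k-|p|}.\]
Factoring the multinomial coefficient as $\frac{n!}{k!\,(k+|p|)!\,(n-2k-|p|)!} = \binom{n}{k}\binom{n-k}{k+|p|} = C^{n}_{k}\,C^{k+|p|}_{n-k}$ turns this into the asserted expression for $\alpha_{p,n}$, and substituting back into the expansion above yields \eqref{eq:comp_degree_heat_first}.

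I do not expect a genuine obstacle here: once the averaging operator is written as $\tfrac14(T+2I+T^{-1})$ the argument is pure bookkeeping. The only points that need a little care are (i) the collapse of the scalar prefactor to $2^{\,j}/4^{n}$; (ii) the change of summation variable $k=\min(i,l)$, the resulting range $0\le k\le[(n-|p|)/2]$, and the appearance of $|p|$ via the $i\leftrightarrow l$ symmetry; and (iii) the elementary identity $\frac{n!}{k!(k+|p|)!(n-2k-|p|)!} = C^{n}_{k}C^{k+|p|}_{n-k}$. Remaining inside $UC_b(\mathbb{R})$ is automatic, since every $\bigl(G(t/n)\bigr)^{n}u_0$ is a finite convex combination of translates of $u_0$.
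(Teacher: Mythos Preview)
Your proposal is correct and follows essentially the same route as the paper: rewrite $G(t/n)$ as $\tfrac14(S+2I+S^{-1})$ for a shift operator, expand the $n$-th power by the multinomial theorem, and collect terms by net shift $p$. The only cosmetic difference is that the paper reuses the letter $k$ when reparametrizing (so its final $k$ is what you call $k=l=\min(i,l)$), whereas you spell out the substitution and the $i\leftrightarrow l$ symmetry that produces $|p|$ more explicitly.
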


\begin{proof}
Consider $G(t) = \frac{1}{4}(S_{2a\sqrt{t}} + S_{-2a\sqrt{t}} +2I)$ for all $t > 0$, where $I$ is an identity operator and $S_{\tau}$ is a shift operator given by $(S_{\tau}f)(x) = f(x + \tau)$ for some $\tau\in\mathbb{R}$. It then yields that 
\begin{equation*}
\begin{split}
    G(t)^{n}_{t/n} & = \frac{1}{4^n}(S_{2a\sqrt{t/n}} + S_{-2a\sqrt{t/n}} +2I)^n \\
    & = \frac{1}{4^n}\sum_{0\leq k + m\leq n}S^{k}_{2a\sqrt{t/n}}S^{m}_{-2a\sqrt{t/n}}2^{n-k-m}\frac{n!}{k!m!(n-k-m)!} \\
    & = \frac{1}{4^n}\sum_{0\leq k + m\leq n}S_{2a(k-m)\sqrt{t/n}}2^{n-k-m}\frac{n!}{k!m!(n-k-m)!}.
\end{split}
\end{equation*}
Now, set $k - m = p$ with $-n\leq p\leq n$. We obtain
\begin{equation*}
\begin{split}
    G(t)^{n}_{t/n} & = \frac{1}{4^n}\sum^{n}_{p = -n}\sum^{\left[\frac{n-|p|}{2}\right]}_{k = 0}C^{k}_{n}C^{k+|p|}_{n-k}2^{n-|p|-2k}S_{2ap\sqrt{t/n}} \\
    & = \frac{1}{4^n}\sum^{n}_{p = -n}\alpha_{p, n}S_{2ap\sqrt{t/n}},
\end{split}
\end{equation*}
where $\alpha_{p, n} = \sum^{\left[\frac{n-|p|}{2}\right]}_{k = 0}C^{n}_{k}C^{k+|p|}_{n - k}2^{n-|p|-2k}$.
\end{proof}
Similarly, one may derive the next
\begin{proposition}
\label{prop:chernoff_heat_second_binomial}
Let $G$ be the Chernoff function given by \eqref{eq:chernoff_heat_second} for all $f\in UC_b(\mathbb{R})$ and fixed $a>0$. Then its $n$-th composition degree is given by the formula
\begin{equation}
\label{eq:comp_degree_heat_second}
    \left(\left(G\left(\frac{t}{n}\right)\right)^{n}u_0\right)(x) = \frac{1}{6^n}\sum^{n}_{p = -n}\beta_{p, n}u_{0}(x+ap\sqrt{6t/n}),
\end{equation}
where for each $n\in\mathbb{N}$ and $p \in \{-n, \dots, n\}$ coefficients $\beta_{p, n}$ are defined as
\begin{equation*}
    \beta_{p, n} = \sum^{\left[\frac{n-|p|}{2}\right]}_{k = 0}C^{n}_{k}C^{k+|p|}_{n - k}4^{n-|p|-2k}.
\end{equation*}
\end{proposition}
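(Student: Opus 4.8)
The plan is to mimic exactly the proof of Proposition \ref{prop:chernoff_heat_first_binomial}, since the Chernoff function \eqref{eq:chernoff_heat_second} has the same three-term structure, only with different weights and different shift lengths. First I would write $G(t) = \frac{1}{6}\bigl(S_{a\sqrt{6t}} + S_{-a\sqrt{6t}} + 4I\bigr)$, where $S_\tau$ denotes the shift operator $(S_\tau f)(x) = f(x+\tau)$ and $I$ the identity; note that the coefficient of $I$ is now $4$ rather than $2$ because $\frac{2}{3} = \frac{4}{6}$, which is the source of the $4^{n-|p|-2k}$ factor in $\beta_{p,n}$ instead of $2^{n-|p|-2k}$.

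Next I would compute $\bigl(G(t/n)\bigr)^n = \frac{1}{6^n}\bigl(S_{a\sqrt{6t/n}} + S_{-a\sqrt{6t/n}} + 4I\bigr)^n$ and expand by the trinomial theorem:
\begin{equation*}
\bigl(G(t/n)\bigr)^n = \frac{1}{6^n}\sum_{0\le k+m\le n} S_{a(k-m)\sqrt{6t/n}}\,4^{n-k-m}\frac{n!}{k!\,m!\,(n-k-m)!},
\end{equation*}
using the fact that shift operators commute and compose additively, i.e. $S^k_\tau S^m_{-\tau} = S_{(k-m)\tau}$. Then I would substitute $p = k-m$, ranging over $-n \le p \le n$, collect all terms with the same value of $p$, and observe that for fixed $p$ the pairs $(k,m)$ with $k-m=p$ and $k+m \le n$ are parametrized by $k$ running from $0$ (when $p\ge 0$; from $|p|$ when $p<0$) up to $\lfloor (n-|p|)/2\rfloor$ after the reindexing, so that the surviving coefficient is $\beta_{p,n} = \sum_{k=0}^{[(n-|p|)/2]} C^n_k C^{k+|p|}_{n-k} 4^{n-|p|-2k}$, exactly as claimed. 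The symmetry under $p \mapsto -p$ justifies writing everything in terms of $|p|$.

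The one genuinely fiddly point — the same one hidden in the previous proposition — is verifying the combinatorial bookkeeping when collapsing the double sum over $(k,m)$ into the single sum over $k$ for fixed $p$: one must check that $\frac{n!}{k!\,m!\,(n-k-m)!}$ with $m = k-|p|$ and total $k+m = 2k-|p|$ indeed equals $C^n_k C^{k+|p|}_{n-k}$ (via $\frac{n!}{k!(n-k)!}\cdot\frac{(n-k)!}{(k+|p|-k)!\,\text{wait}}$ — more precisely $C^n_k \cdot C^{n-k}_{m}$ with the binomial identity $C^{n-k}_{m} = C^{k+|p|}_{n-k}$ when $m = n - k - (k+|p|) \dots$), and that the upper limit of the $k$-sum is correctly $\lfloor (n-|p|)/2\rfloor$ so that $n-k-m = n - (2k-|p|) \ge 0$. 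Since this is word-for-word analogous to the already-proven case with $2 \rightsquigarrow 4$, I would simply write ``arguing as in the proof of Proposition \ref{prop:chernoff_heat_first_binomial}'' and display only the trinomial expansion and the final reindexed sum, trusting the reader to transfer the identical combinatorial manipulation. The main obstacle, such as it is, is purely notational care with the binomial coefficient identity, not any conceptual difficulty.
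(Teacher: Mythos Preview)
Your proposal is correct and is essentially the same approach as the paper's: the paper gives no separate proof of this proposition at all, merely prefacing it with ``Similarly, one may derive the next'', so your plan to rerun the argument of Proposition~\ref{prop:chernoff_heat_first_binomial} with $4I$ in place of $2I$ and shift $a\sqrt{6t}$ in place of $2a\sqrt{t}$ is exactly what is intended. Your hesitation in the last paragraph about the reindexing is warranted only insofar as the new summation index $k$ in $\beta_{p,n}$ is really the smaller of the original pair $(k,m)$ (i.e.\ $\min(k,m)$, which equals $m$ when $p\ge 0$), but this is the same bookkeeping already carried out in the previous proposition and introduces nothing new.
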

\subsubsection{Analysis of convergence rate for the initial condition \boldmath{$u_0(x) = [x\to\sin(x)]$}}
Let us now find the solution of the above Cauchy problem \eqref{eq:cauchy_heat} for $u_0 =[x \mapsto \sin(x)]$.

\begin{theorem}
Suppose $u_0 =[x \mapsto \sin(x)]$. Then the solution of the system \eqref{eq:cauchy_heat} with initial condition $u_0$ is given by the formula $u(t,x)=(e^{tL}u_0)(x)=e^{-a^2t}\sin(x)$.
\end{theorem}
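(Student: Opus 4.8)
The plan is to evaluate directly the Poisson integral \eqref{eq:poisson}--\eqref{eq:poisson_integral} on $u_0 = [x\mapsto\sin(x)]$. First I would make the substitution $z = y - x$, which rewrites $u(t,x)$ as the convolution of the Gaussian kernel $\Phi(t,\cdot)$ against $z\mapsto\sin(x+z)$; then I would expand $\sin(x+z) = \sin(x)\cos(z) + \cos(x)\sin(z)$ by the angle-addition formula. The term carrying $\sin(z)$ integrates to $0$ because its integrand is odd in $z$ while the Gaussian weight $e^{-z^2/(4a^2 t)}$ is even, so only the $\cos(z)$ term survives, with $\sin(x)$ pulled out in front of the integral.

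What then remains is the classical Fourier--Gauss integral $\int_{\mathbb{R}} e^{-\alpha z^2}\cos(\beta z)\,dz = \sqrt{\pi/\alpha}\,e^{-\beta^2/(4\alpha)}$, which I would apply with $\alpha = 1/(4a^2 t)$ and $\beta = 1$. This gives $\int_{\mathbb{R}} e^{-z^2/(4a^2 t)}\cos(z)\,dz = 2a\sqrt{\pi t}\,e^{-a^2 t}$, and multiplying by the normalising prefactor $\tfrac{1}{2a\sqrt{\pi t}}$ from \eqref{eq:poisson_integral} cancels everything except $e^{-a^2 t}$, so $u(t,x) = e^{-a^2 t}\sin(x)$. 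Letting $t\to+0$ recovers $u(0,x)=\sin(x)$, and since $u_0 = \sin\in UC_b(\mathbb{R})$ the Poisson integral indeed furnishes the $C_0$-semigroup solution, so this is the solution of \eqref{eq:cauchy_heat}.

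The only genuinely non-routine ingredient is the evaluation of $\int_{\mathbb{R}} e^{-\alpha z^2}\cos(\beta z)\,dz$, which I would either cite as standard or derive quickly (differentiate $I(\beta)$ in $\beta$, integrate by parts to obtain the ODE $I'(\beta) = -\tfrac{\beta}{2\alpha}I(\beta)$, and solve with $I(0)=\sqrt{\pi/\alpha}$). A shorter alternative, which I would at least note, bypasses the integral entirely: set $v(t,x):=e^{-a^2 t}\sin(x)$, so that $v'_t = -a^2 e^{-a^2 t}\sin(x) = a^2 v''_{xx}$ with $v(0,x)=\sin(x)$; since $\sin''=-\sin\in UC_b(\mathbb{R})$ we have $\sin\in dom(L)$, hence $v$ is a classical solution, and uniqueness of the $C_0$-semigroup solution of \eqref{eq:cauchy_heat} forces $v = e^{tL}u_0$. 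I expect the main (and only minor) obstacle to be pure bookkeeping: tracking the constants $a$, $\sqrt{t}$ and $\sqrt{\pi}$ through the Gaussian integral so that the prefactor cancels cleanly.
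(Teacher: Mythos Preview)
Your proposal is correct. Both you and the paper evaluate the Poisson integral \eqref{eq:poisson}--\eqref{eq:poisson_integral} directly, but the reductions differ slightly. The paper writes $\sin(y)=\tfrac{e^{iy}-e^{-iy}}{2i}$, completes the square in each exponent to obtain $\exp(\pm ix - a^2 t)$, and then recombines into $e^{-a^2 t}\sin(x)$. You instead shift variables, expand $\sin(x+z)$ by angle addition, kill the odd $\sin(z)$ piece by symmetry, and invoke the real Fourier--Gauss integral $\int_{\mathbb{R}} e^{-\alpha z^2}\cos(\beta z)\,dz=\sqrt{\pi/\alpha}\,e^{-\beta^2/(4\alpha)}$. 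Your route is marginally more self-contained, since it stays entirely real and you sketch a one-line ODE derivation of the key integral, whereas the paper's completing-the-square step tacitly shifts a Gaussian integration contour by an imaginary amount without comment. Your alternative observation---that $v(t,x)=e^{-a^2 t}\sin(x)$ visibly satisfies the PDE and initial condition, so uniqueness for the $C_0$-semigroup finishes the job---is not in the paper and is the shortest argument of all.
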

\begin{proof}
Using \eqref{eq:poisson} and substituting $u_0 =[x \mapsto \sin(x)]$, we get 
\begin{equation*}
    u(t,x)=(e^{tL}u_0)(x)=\int_{\mathbb{R}}\frac{1}{2a\sqrt{\pi t}}\exp\left(-\frac{(x-y)^2}{4a^2t}\right)\sin(y)dy.
\end{equation*}

Now, let us apply the following identity $\sin(x)=\frac{e^{ix}-e^{-ix}}{2i}$. So, we obtain
\begin{equation*}
    u(t, x) = \frac{1}{2i}\frac{1}{2a\sqrt{\pi t}}\left(\int_{\mathbb{R}}\exp\left(-\frac{(x-y)^2}{4a^2t}\right)\exp(iy)dy-\int_{\mathbb{R}}\exp\left(-\frac{(x-y)^2}{4a^2t}\right)\exp(-iy)dy\right). \\
\end{equation*}

The former integral now takes the form 
\begin{equation*}
\begin{split}
    \int_{\mathbb{R}}\frac{1}{2a\sqrt{\pi t}}\exp\left(-\frac{(y-(x+2a^2ti))^2-(x+2a^2ti)^2+x^2}{4a^2t}\right)dy & = \exp\left(-\frac{-(x+2a^2ti)^2+x^2}{4a^2t}\right) \\
    & = \exp\left(ix-a^2t\right).
\end{split}
\end{equation*}

Similarly, the latter integral is equal to $\exp\left(-ix-a^2t\right)$, i.e., the solution can now be expressed as
\begin{equation}
\label{eq:heat_semigroup_sin}
    u(t,x)=(e^{tL}u_0)(x) = e^{-a^2t}\left(\frac{e^{ix}-e^{-ix}}{2i}\right)=e^{-a^2t}\sin(x).
\end{equation}

So, we found the solution of the Cauchy problem given by \eqref{eq:cauchy_heat} for $u_0 =[x \mapsto \sin(x)]$.
\end{proof}

Prior to analysing the convergence rate for Chernoff function \eqref{eq:chernoff_heat_first} and $u_0 = [x\to\sin(x)]$ let us compute its $n$-th composition degree.
\begin{proposition}
\label{prop:sin_first_heat_simpl}
Let $G$ be the Chernoff function given by \eqref{eq:chernoff_heat_first} for all $f\in UC_b(\mathbb{R})$ and fixed $a>0$. Then its $n$-th composition degree for $u_0 =[x \mapsto \sin(x)]$ has the form
\begin{equation}
\label{eq:comp_degree_heat_sin_first}
    \left(\left(G\left(\frac{t}{n}\right)\right)^{n}u_0\right)(x) = \left(\cos\left(a\sqrt{\frac{t}{n}}\right)\right)^{2n}\sin(x).
\end{equation}
\end{proposition}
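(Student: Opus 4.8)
The plan is to exploit the fact that $\sin$ behaves almost like an eigenvector of each operator $G(s)$: applying $G(s)$ to $\sin$ returns a scalar multiple of $\sin$, so iterating $n$ times merely multiplies those scalars together.

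First I would compute $G(s)u_0$ for $u_0=[x\mapsto\sin x]$ and an arbitrary $s>0$. Using the definition \eqref{eq:chernoff_heat_first} together with the identity $\sin(x+\theta)+\sin(x-\theta)=2\sin x\cos\theta$ with $\theta=2a\sqrt{s}$, one obtains
\[
(G(s)u_0)(x)=\tfrac14\bigl(2\sin x\cos(2a\sqrt{s})\bigr)+\tfrac12\sin x=\sin x\cdot\frac{1+\cos(2a\sqrt{s})}{2}=\cos^2(a\sqrt{s})\,\sin x,
\]
where the last step is the half-angle formula $\tfrac{1+\cos\varphi}{2}=\cos^2(\varphi/2)$. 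Thus $G(s)$ sends the function $\sin$ to $\cos^2(a\sqrt{s})$ times $\sin$, and the argument $x$ is left untouched.

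Next I would argue by induction on the number of compositions, with $s=t/n$ fixed. The base case is exactly the computation above. For the inductive step, suppose $\bigl(G(t/n)\bigr)^m u_0=\bigl(\cos(a\sqrt{t/n})\bigr)^{2m}\sin$; applying $G(t/n)$ once more and using its linearity together with $G(t/n)\sin=\cos^2(a\sqrt{t/n})\sin$, the accumulated scalar $\bigl(\cos(a\sqrt{t/n})\bigr)^{2m}$ simply picks up one further factor $\cos^2(a\sqrt{t/n})$. After $n$ steps this yields \eqref{eq:comp_degree_heat_sin_first}. Equivalently, one could start from the explicit expansion of Proposition \ref{prop:chernoff_heat_first_binomial}, rewrite each term $\sin(x+2ap\sqrt{t/n})$ via the angle-addition formula and collect, but the eigenfunction observation avoids resumming the resulting binomial series.

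There is essentially no serious obstacle here: the only points requiring care are the bookkeeping in the half-angle identity (so that the factor is $\cos^2(a\sqrt{s})$ rather than $\cos(2a\sqrt{s})$) and the remark that, since every application of $G(t/n)$ preserves the variable $x$, the iterate is genuinely a pure scalar multiple of $[x\mapsto\sin x]$ at each stage, so that the supremum norm of the difference with $e^{tL}u_0$ later reduces to a one-dimensional trigonometric estimate.
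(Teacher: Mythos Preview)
Your proposal is correct and follows essentially the same approach as the paper: compute $G(s)\sin = \cos^2(a\sqrt{s})\sin$ via the sum-to-product and half-angle identities, then observe that this scalar-multiplication (eigenfunction) structure makes the $n$-fold composition a power of the scalar. The paper phrases the second step as ``$G(t)$ is a function multiplication operator'' rather than writing out the induction, but the content is identical.
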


\begin{proof}
Substituting the initial condition into \eqref{eq:chernoff_heat_first}, we get 
\begin{equation*}
\begin{split}
    \left(G(t)u_0\right)(x) & = \frac{1}{4}\sin(x+2a\sqrt{t}) + \frac{1}{4}\sin(x-2a\sqrt{t})+ \frac{1}{2}\sin(x) \\
    & = \left(\frac{1 + \cos(2a\sqrt{t})}{2}\right)\sin(x) \\
    & = \cos^{2}(a\sqrt{t})\sin(x).
\end{split}
\end{equation*}
So, for all $t\geq0$ operator $G(t)$ is a function multiplication operator, meaning its $n$-th composition degree has the following form 
\begin{equation*}
    \left(\left(G\left(\frac{t}{n}\right)\right)^{n}u_0\right)(x) = \left(\cos\left(a\sqrt{\frac{t}{n}}\right)\right)^{2n}u_0(x) = \left(\cos\left(a\sqrt{\frac{t}{n}}\right)\right)^{2n}\sin(x)
\end{equation*}
for all $t \geq 0$ and $x\in\mathbb{R}$.
\end{proof}

Finally, let us find the convergence speed for $u_0 =[x \mapsto \sin(x)]$ and the above derived Chernoff function \eqref{eq:comp_degree_heat_sin_first}. The main result of this subsection is presented in the next 
\begin{theorem}
\label{th:sin_first_heat}
Let $(e^{tL})_{t\geq0}$ be the heat $C_0$-semigroup, and $G$ be the Chernoff function given by \eqref{eq:chernoff_heat_first} for all $f\in UC_b(\mathbb{R})$ and fixed $a>0$. Then function $u_0 =[x \mapsto \sin(x)]$ belongs to the approximation subspace of order $\frac{1}{n}$.
\end{theorem}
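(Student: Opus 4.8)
The plan is to reduce the statement to a one-variable asymptotic estimate, using the closed form for the $n$-th composition degree already obtained in Proposition~\ref{prop:sin_first_heat_simpl} together with the known exact solution \eqref{eq:heat_semigroup_sin}. Since both $\left(\left(G(t/n)\right)^n u_0\right)(x)$ and $(e^{tL}u_0)(x)$ are scalar multiples of $\sin(x)$, the supremum norm of their difference is simply
\[
\xi(n)=\left|\left(\cos\left(a\sqrt{t/n}\right)\right)^{2n}-e^{-a^2 t}\right|\cdot\sup_{x\in\mathbb{R}}|\sin(x)|
      =\left|\left(\cos\left(a\sqrt{t/n}\right)\right)^{2n}-e^{-a^2 t}\right|.
\]
So everything comes down to estimating how fast the sequence $\bigl(\cos(a\sqrt{t/n})\bigr)^{2n}$ converges to $e^{-a^2 t}$ as $n\to\infty$, uniformly for $t\in[0,t_0)$ for any fixed $t_0$.

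First I would take logarithms: write $\bigl(\cos(a\sqrt{t/n})\bigr)^{2n}=\exp\!\bigl(2n\ln\cos(a\sqrt{t/n})\bigr)$, set $s=a\sqrt{t/n}$ (so $s\to 0$), and expand $\ln\cos s=-\tfrac{s^2}{2}-\tfrac{s^4}{12}+O(s^6)$ as $s\to 0$. Substituting $s^2=a^2 t/n$ gives
\[
2n\ln\cos\!\left(a\sqrt{t/n}\right)=-a^2 t-\frac{a^4 t^2}{6n}+O\!\left(\frac{1}{n^2}\right),
\]
the error term being uniform for $t$ in a bounded interval. Exponentiating and using $e^{-a^2t-\varepsilon_n}=e^{-a^2t}(1-\varepsilon_n+O(\varepsilon_n^2))$ with $\varepsilon_n=\tfrac{a^4t^2}{6n}+O(n^{-2})$ yields
\[
\left(\cos\left(a\sqrt{t/n}\right)\right)^{2n}=e^{-a^2t}\left(1-\frac{a^4t^2}{6n}+O\!\left(\frac{1}{n^2}\right)\right),
\]
so that $\xi(n)=e^{-a^2t}\dfrac{a^4t^2}{6n}+O\!\left(\dfrac{1}{n^2}\right)$. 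Since $e^{-a^2t}t^2$ is bounded on $[0,t_0)$ (indeed on all of $[0,\infty)$), the quantity $\sup_{t\in[0,t_0)}\xi(n)$ is $O(1/n)$, which is exactly the assertion that $u_0=[x\mapsto\sin(x)]$ lies in the approximation subspace of order $\tfrac1n$. One should also note the matching lower bound: the leading coefficient $\tfrac{a^4 t^2}{6}e^{-a^2 t}$ is strictly positive for any fixed $t>0$, so the order $\tfrac1n$ is sharp and cannot be improved to any $o(1/n)$; this is worth recording even though the statement as written only claims membership.

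The only genuinely delicate point is the \emph{uniformity} of the remainder in $t$: the Taylor expansion of $\ln\cos s$ is an expansion as $s\to0$, and one must check that for $t$ ranging over the bounded set $[0,t_0)$ the corresponding $s=a\sqrt{t/n}$ ranges over $[0,a\sqrt{t_0/n})$, so that all the $O(s^6)$ error constants can be taken uniform once $n$ is large enough; near $t=0$ the difference $\xi(n)$ degenerates to $0$ and causes no trouble. Apart from this, the argument is a routine Landau-symbol computation, and I would present it compactly, remarking at the end that the behaviour is consistent with Conjecture~\ref{con:1} since for this $G$ one computes $G''(0)=2a^2\frac{d^2}{dx^2}=2L$, hence $L^2-G''(0)\neq 0$ in general, so the $\tfrac1n$-rate (rather than a faster one) is precisely what Conjecture~\ref{con:2} predicts.
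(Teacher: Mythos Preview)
Your argument is correct and follows essentially the same route as the paper: reduce to the scalar quantity $\bigl|(\cos(a\sqrt{t/n}))^{2n}-e^{-a^2t}\bigr|$ via Proposition~\ref{prop:sin_first_heat_simpl}, then take logarithms and Taylor-expand to extract the $1/n$ leading term; your version is in fact tidier, since you track the factor $e^{-a^2t}$ and discuss uniformity in $t$ and sharpness explicitly. One small correction to your closing remark: a Taylor expansion of $G(t)f$ in $t$ gives $G''(0)=\tfrac{2}{3}a^4\frac{d^4}{dx^4}=\tfrac{2}{3}L^2$, not $2L$; your conclusion that $L^2-G''(0)\neq 0$ (and hence that Conjecture~\ref{con:2} predicts the $1/n$ rate) is nevertheless correct.
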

\begin{proof}
Applying the definition of a standard norm in $UC_b(\mathbb{R})$, we get

\begin{equation*}
\begin{split}
\left\|\left(G\left(\frac{t}{n}\right)\right)^{n}u_0-e^{tL}u_0\right\| & = \sup_{x\in\mathbb{R}}\left|\left(\cos\left(a\sqrt{\frac{t}{n}}\right)\right)^{2n}\sin(x)-e^{-a^2t}\sin(x)\right|\\
& = \left|\left(\cos\left(a\sqrt{\frac{t}{n}}\right)\right)^{2n}-e^{-a^2t}\right|\sup_{x\in\mathbb{R}}\left|\sin(x)\right|.
\end{split}
\end{equation*}
Now, let us exploit the following Taylor expansions:
\begin{equation*}
    \cos(z)=1-\frac{z^2}{2!}+O(z^4),
\end{equation*} 
and 
\begin{equation*}
    \ln(1+z)^\alpha=z-\frac{z^2}{2!}+O(z^4),
\end{equation*}
and
\begin{equation*}
    \exp(z) = 1 + z + \frac{z^2}{2!}+O(z^3)
\end{equation*}
when $z \to 0$. So, we have
\begin{equation*}
\begin{split}
    \left\|\left(G\left(\frac{t}{n}\right)\right)^{n}u_0-e^{tL}u_0\right\| & = \left|e^{2n\ln\left(\cos\left(a\sqrt{\frac{t}{n}}\right)\right)}-e^{-a^2t}\right| \\
    & = \left|-\frac{a^4t^2}{6n}+O\left(\frac{1}{n}\right)\right|, \;\mbox{when}\; n \to \infty.
\end{split}
\end{equation*}
This proves that $u_0 =[x \mapsto \sin(x)]$ belongs to the approximation subspace of order $\frac{1}{n}$.
\end{proof}

Similarly, applying the same technique as of Theorem \ref{th:sin_first_heat} one may propose the next series of results containing the estimates of the convergence rate for Chernoff functions \eqref{eq:chernoff_heat_second} and \eqref{eq:chernoff_heat_third}, starting with
\begin{proposition}
\label{prop:sin_second_heat_simpl}
Let $G$ be the Chernoff function given by \eqref{eq:chernoff_heat_second} (respectively \eqref{eq:chernoff_heat_third}) for all $f\in UC_b(\mathbb{R})$ and fixed $a>0$. Then its $n$-th composition degree for $u_0 =[x \mapsto \sin(x)]$ is given by
\begin{equation}
\label{eq:comp_degree_heat_sin_second}
    \left(\left(G\left(\frac{t}{n}\right)\right)^{n}u_0\right)(x) = \frac{1}{3^{n}}\left(2 + \cos\left(a\sqrt{\frac{6t}{n}}\right)\right)^{n}\sin(x)
\end{equation}
respectively
\begin{equation}
\label{eq:comp_degree_heat_sin_third}
    \left(\left(G\left(\frac{t}{n}\right)\right)^{n}u_0\right)(x) = \frac{1}{15^{n}}\left(5 + \cos\left(a\sqrt{\frac{12t}{n}}\right) + 9\cos\left(a\sqrt{\frac{2t}{n}}\right)\right)^{n}\sin(x).
\end{equation}
\end{proposition}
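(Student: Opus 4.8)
The plan is to mimic exactly the argument used for Proposition \ref{prop:sin_first_heat_simpl}: the point is that when the initial datum is $u_0=[x\mapsto\sin x]$, each operator $G(t)$ collapses to a multiplication-by-a-scalar operator, after which passing to the $n$-th composition power is immediate.

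First I would substitute $u_0=\sin$ into \eqref{eq:chernoff_heat_second} and apply the sum-to-product identity $\sin(x+\theta)+\sin(x-\theta)=2\sin(x)\cos\theta$ with $\theta=a\sqrt{6t}$. This turns $(G(t)u_0)(x)$ into $\bigl(\tfrac13\cos(a\sqrt{6t})+\tfrac23\bigr)\sin(x)=\tfrac13\bigl(2+\cos(a\sqrt{6t})\bigr)\sin(x)$. Hence $G(t)$ is the operator of multiplication by the scalar $m(t)=\tfrac13\bigl(2+\cos(a\sqrt{6t})\bigr)$, so $(G(t/n))^n$ is multiplication by $m(t/n)^n=\tfrac1{3^n}\bigl(2+\cos(a\sqrt{6t/n})\bigr)^n$, which is precisely \eqref{eq:comp_degree_heat_sin_second}. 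The same computation applied to \eqref{eq:chernoff_heat_third} uses the identity twice, with $\theta=a\sqrt{12t}$ and $\theta=a\sqrt{2t}$, giving $(G(t)u_0)(x)=\bigl(\tfrac1{15}\cos(a\sqrt{12t})+\tfrac35\cos(a\sqrt{2t})+\tfrac13\bigr)\sin(x)=\tfrac1{15}\bigl(5+\cos(a\sqrt{12t})+9\cos(a\sqrt{2t})\bigr)\sin(x)$, and raising this scalar to the $n$-th power yields \eqref{eq:comp_degree_heat_sin_third}.

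There is essentially no genuine obstacle here; the only thing to watch is the arithmetic of recombining the coefficients ($\tfrac1{15}+\tfrac35+\tfrac13=1$, as a sanity check consistent with $G(0)=I$), and the observation that multiplication operators commute, so no ordering issues arise when forming $(G(t/n))^n$. I would also briefly note that $\cos(a\sqrt{t/n})\in(-1,1]$ and likewise the scalars above lie in $(-1,1]$ for the relevant range, so $G(t)$ is indeed a bounded operator on $UC_b(\mathbb{R})$ with the claimed composition power, although this is not strictly needed for the algebraic identity being asserted.
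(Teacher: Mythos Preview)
Your proposal is correct and follows exactly the approach the paper intends: the paper does not write out a separate proof for this proposition but simply says to apply ``the same technique'' as in Proposition~\ref{prop:sin_first_heat_simpl}, namely substitute $u_0=\sin$, use the sum-to-product identity to reduce $G(t)u_0$ to a scalar multiple of $\sin(x)$, and then raise that scalar to the $n$-th power. The only (harmless) imprecision is calling $G(t)$ itself ``the operator of multiplication by the scalar'' --- strictly speaking $\sin$ is an eigenfunction of $G(t)$, which is all that is needed --- but the paper's own wording in the earlier proof makes the same elision.
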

\begin{theorem}
\label{th:sin_second_heat}
Let $(e^{tL})_{t\geq0}$ be the heat $C_0$-semigroup, and $G$ be the Chernoff function given by \eqref{eq:chernoff_heat_second} (respectively \eqref{eq:chernoff_heat_third}) for all $f\in UC_b(\mathbb{R})$ and fixed $a>0$. Then function $u_0 =[x \mapsto \sin(x)]$ belongs to the approximation subspace of order $\frac{1}{n^2}$ and $\frac{1}{n^3}$ respectively.
\end{theorem}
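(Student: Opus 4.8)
The plan is to repeat the structure of the proof of Theorem~\ref{th:sin_first_heat} almost verbatim, exploiting that Proposition~\ref{prop:sin_second_heat_simpl} already reduces each $n$-th composition degree to a pure multiplication operator acting on $\sin$. Concretely, combining \eqref{eq:comp_degree_heat_sin_second} (respectively \eqref{eq:comp_degree_heat_sin_third}) with the exact solution \eqref{eq:heat_semigroup_sin} and the fact that $\sup_{x\in\mathbb{R}}|\sin x|=1$, one gets
\[
\left\|\left(G\left(\tfrac{t}{n}\right)\right)^{n}u_0-e^{tL}u_0\right\| = \big|\,g(t/n)^{\,n}-e^{-a^2t}\,\big|,
\]
where $g(s)=\tfrac13\big(2+\cos(a\sqrt{6s})\big)$ in the first case and $g(s)=\tfrac1{15}\big(5+\cos(a\sqrt{12s})+9\cos(a\sqrt{2s})\big)$ in the second. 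Thus everything reduces to estimating a single scalar sequence $m_n(t):=g(t/n)^{\,n}$, just as \eqref{eq:comp_degree_heat_sin_first} reduced Theorem~\ref{th:sin_first_heat} to the sequence $\cos(a\sqrt{t/n})^{2n}$.

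The heart of the matter is a Taylor expansion of $g$ at $s=0$. Since $g(0)=1$ and $g(s)>0$ near $0$, I would write $m_n(t)=\exp\!\big(n\ln g(t/n)\big)$ and substitute the Maclaurin series $\cos y=1-\tfrac{y^2}{2}+\tfrac{y^4}{24}-\tfrac{y^6}{720}+\cdots$, then collect powers of $s$. The key observation is that the weights in \eqref{eq:chernoff_heat_second} are chosen precisely so that the relevant even moments match those of the Gaussian heat kernel up to the order that forces $g(s)=e^{-a^2s}+O(s^3)$, while the extra node in \eqref{eq:chernoff_heat_third} matches one more moment and yields the stronger agreement $g(s)=e^{-a^2s}+O(s^4)$ (this is the higher-order refinement of the Chernoff-tangency conditions (CT3)--(CT4)). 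Consequently $\ln g(s)=-a^2s+O(s^3)$ respectively $\ln g(s)=-a^2s+O(s^4)$, so that $n\ln g(t/n)=-a^2t+O(t^3/n^2)$ respectively $-a^2t+O(t^4/n^3)$, and exponentiating gives $m_n(t)=e^{-a^2t}\big(1+O(n^{-2})\big)$ respectively $e^{-a^2t}\big(1+O(n^{-3})\big)$, uniformly for $t$ in any bounded interval. This is exactly the claimed membership in the approximation subspaces of order $\tfrac1{n^2}$ and $\tfrac1{n^3}$.

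The main obstacle, as in Theorem~\ref{th:sin_first_heat}, is not the formal expansion but making it rigorous and uniform in $t$: one must check that $g(t/n)$ stays strictly positive (so that $\ln g(t/n)$ is defined) for every $n\in\mathbb{N}$ and every $t$ in the interval under consideration, and that the constants hidden in the $O(\cdot)$ terms coming from the expansions of $\cos$, of $\ln(1+\cdot)$, and of $\exp$ can be chosen independently of $n$ and of $t$. Because each $g$ is real-analytic near $0$ with $g(0)=1$, Taylor's formula with an explicit remainder on a fixed interval $[0,t_0]$ supplies these uniform bounds, and the remaining bookkeeping is routine. Alternatively, one can avoid logarithms altogether by keeping $n$ finite and estimating $\big|g(t/n)^{n}-(e^{-a^2 t/n})^{n}\big|$ directly through the telescoping identity $A^n-B^n=\sum_{j=0}^{n-1}A^{j}(A-B)B^{n-1-j}$ together with the pointwise bound $g(s)-e^{-a^2s}=O(s^3)$ (resp.\ $O(s^4)$), which turns the sum of $n$ terms of size $O(n^{-3})$ (resp.\ $O(n^{-4})$) into the desired $O(n^{-2})$ (resp.\ $O(n^{-3})$).
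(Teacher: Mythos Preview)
Your proposal is correct and is exactly the approach the paper has in mind: the paper does not supply a separate proof of Theorem~\ref{th:sin_second_heat} but simply says it follows by ``applying the same technique as of Theorem~\ref{th:sin_first_heat}'', which is precisely the reduction to the scalar sequence $g(t/n)^n$ via Proposition~\ref{prop:sin_second_heat_simpl} and the Taylor expansion of $g$ that you carry out. Your observation that the coefficients in \eqref{eq:chernoff_heat_second} and \eqref{eq:chernoff_heat_third} force the Maclaurin series of $g(s)$ to agree with $e^{-a^2 s}$ through order $s^2$ respectively $s^3$ is the crux of the argument (and can be checked directly: one finds $g(s)-e^{-a^2 s}=\tfrac{a^6}{15}s^3+O(s^4)$ and $g(s)-e^{-a^2 s}=-\tfrac{a^8}{140}s^4+O(s^5)$ respectively), and your telescoping alternative is a clean way to sidestep the positivity issue for $\ln g$.
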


\subsubsection{Finding the exact solution for the initial condition \boldmath{$u_0 = [x\to\exp(-|x|)]$}}
Here we will find the solution of the Cauchy problem given by \eqref{eq:cauchy_heat} for $u_0=[x\mapsto e^{-|x|}]$.

\begin{theorem}
Suppose $u_0=[x\mapsto e^{-|x|}]$. Then the solution of the system \eqref{eq:cauchy_heat} with initial condition $u_0$ is given by the formula 
\begin{equation}
\label{eq:heat_semigroup_exp}
    u(t,x)=(e^{tL}u_0)(x)=e^{t-x}\left(1-\frac{1}{2}\erfc\left(\frac{x}{2\sqrt{t}}-\sqrt{t}\right)\right) + e^{t+x}\frac{1}{2}\erfc\left(\frac{x}{2\sqrt{t}}+\sqrt{t}\right),
\end{equation}
where
\begin{equation}
\label{eq:error_func}
    \erfc(x) = \frac{2}{\sqrt{\pi}}\int^{+\infty}_{x}e^{-y^2}dy.
\end{equation}
\end{theorem}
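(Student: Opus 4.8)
The plan is to evaluate the Poisson integral \eqref{eq:poisson}--\eqref{eq:poisson_integral} directly for $u_0(y) = e^{-|y|}$ (taking $a = 1$, matching the form of \eqref{eq:heat_semigroup_exp}). The only obstruction to an immediate Gaussian computation is the kink of $e^{-|y|}$ at the origin, so the first step is to split the integral at $y = 0$, where $e^{-|y|}$ equals $e^{-y}$ on $(0,+\infty)$ and $e^{y}$ on $(-\infty,0)$:
\begin{equation*}
u(t,x) = \frac{1}{2\sqrt{\pi t}}\int_{0}^{+\infty} e^{-\frac{(x-y)^2}{4t}} e^{-y}\,dy + \frac{1}{2\sqrt{\pi t}}\int_{-\infty}^{0} e^{-\frac{(x-y)^2}{4t}} e^{y}\,dy .
\end{equation*}
Absolute convergence of the original integrand (Gaussian decay against a bounded factor) justifies the splitting, and on each half-line the integrand is now a genuine Gaussian times a pure exponential in $y$.

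Next I would complete the square in $y$ in each exponent. For the first integral one has $-\frac{(x-y)^2}{4t} - y = -\frac{(y-(x-2t))^2}{4t} + (t - x)$, and for the second $-\frac{(x-y)^2}{4t} + y = -\frac{(y-(x+2t))^2}{4t} + (t + x)$. This pulls the constants $e^{t-x}$ and $e^{t+x}$ out of the respective integrals and leaves pure Gaussian integrals over $[0,+\infty)$ and $(-\infty,0]$ centred at $x-2t$ and $x+2t$. The substitution $s = (y-(x\mp 2t))/(2\sqrt{t})$ then turns each remaining integral into $\frac{1}{\sqrt{\pi}}\int e^{-s^2}\,ds$ over a half-line, i.e. into a value of $\frac{1}{2}\erfc$; tracking the endpoints yields $\frac{1}{2}\erfc\!\left(\sqrt{t} - \tfrac{x}{2\sqrt{t}}\right)$ from the first integral and $\frac{1}{2}\erfc\!\left(\tfrac{x}{2\sqrt{t}} + \sqrt{t}\right)$ from the second (using the evenness of $e^{-s^2}$ to flip the direction of integration in the second case).

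Finally I would apply the reflection identity $\erfc(-z) = 2 - \erfc(z)$ to rewrite $\frac{1}{2}\erfc\!\left(\sqrt{t} - \tfrac{x}{2\sqrt{t}}\right) = 1 - \frac{1}{2}\erfc\!\left(\tfrac{x}{2\sqrt{t}} - \sqrt{t}\right)$, which puts the sum into exactly the form \eqref{eq:heat_semigroup_exp}. The whole argument is routine Gaussian calculus, so there is no serious obstacle; the only places demanding care are the algebra of completing the square — keeping the $\pm$ signs straight and correctly identifying the pulled-out exponents $t \mp x$ — and the orientation of the limits of integration after the change of variables, together with the single $\erfc$ reflection identity used at the end. (Note also that $u_0 = e^{-|x|}\in UC_b(\mathbb{R})$, so the remark after \eqref{eq:poisson} guarantees this integral is indeed the semigroup solution and only its evaluation is at issue.)
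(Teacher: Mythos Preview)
Your proposal is correct and follows essentially the same route as the paper's proof: split the Poisson integral where $|y|$ changes sign, complete the square, and reduce each piece to a half-line Gaussian integral expressed via $\erfc$. The only cosmetic difference is ordering: the paper first substitutes $x-y = 2z\sqrt{t}$ to normalize the Gaussian and then splits at $z = x/(2\sqrt{t})$, whereas you split at $y=0$ and complete the square directly in $y$; correspondingly, where the paper uses $\int_{-\infty}^{c} e^{-w^2}\,dw = \sqrt{\pi} - \int_c^{+\infty} e^{-w^2}\,dw$ to reach the $1-\tfrac12\erfc$ term, you use the equivalent reflection identity $\erfc(-z)=2-\erfc(z)$.
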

\begin{proof}
First, let us plug the formula giving the initial condition into the solution defined in \eqref{eq:poisson} and \eqref{eq:poisson_integral}. We have
\begin{equation*}
\begin{split}
    u(t,x) & = \frac{1}{2\sqrt{\pi t}}\int^{+\infty}_{-\infty}e^{-\frac{(x-y)^2}{4t}}u_0(y)dy \\
    & = \frac{1}{2\sqrt{\pi t}}\int^{+\infty}_{-\infty}e^{-\frac{(x-y)^2}{4t}}e^{-|y|}dy.
\end{split}
\end{equation*}
Now, change the variables within the integral by denoting $x - y = 2z\sqrt{t}$. It then yields that $y = x - 2z\sqrt{t}$ and $dy = 2\sqrt{t}dz$. So,
\begin{equation*}
\begin{split}
    u(t,x) & = \frac{1}{2\sqrt{\pi t}}\int^{+\infty}_{-\infty}e^{-z^2}e^{-|x-2z\sqrt{t}|}2\sqrt{t}dz \\
    & = \frac{1}{\sqrt{\pi}}\int^{\frac{x}{2\sqrt{t}}}_{-\infty}e^{-(z^2-2z\sqrt{t}+x)}dz + \frac{1}{\sqrt{\pi}}\int^{+\infty}_{\frac{x}{2\sqrt{t}}}e^{-(z^2+2z\sqrt{t}-x)}dz \\
    & = \frac{1}{\sqrt{\pi}}\int^{\frac{x}{2\sqrt{t}}}_{-\infty}e^{-(z-\sqrt{t})^2 + t - x}dz + \frac{1}{\sqrt{\pi}}\int^{+\infty}_{\frac{x}{2\sqrt{t}}}e^{-(z+\sqrt{t})^2 + t + x}dz.
\end{split}
\end{equation*}
Now, set $z - t = w$ and $z + t = w$ for the former respectively latter integral. Hence, we obtain
\begin{equation*}
\begin{split}
    u(t,x) & = \frac{1}{\sqrt{\pi}}e^{t-x}\int^{\frac{x}{2\sqrt{t}}-\sqrt{t}}_{-\infty}e^{-w^2}dw + \frac{1}{\sqrt{\pi}}e^{t+x}\int^{+\infty}_{\frac{x}{2\sqrt{t}} + \sqrt{t}}e^{-w^2}dw \\
    & = e^{t-x}\left(1-\frac{1}{2}\frac{2}{\sqrt{\pi}}\int^{+\infty}_{\frac{x}{2\sqrt{t}}-\sqrt{t}}e^{-w^2}dw\right) + \frac{1}{2}\frac{2}{\sqrt{\pi}}e^{t+x}\int^{+\infty}_{\frac{x}{2\sqrt{t}} + \sqrt{t}}e^{-w^2}dw \\
    & = e^{t-x}\left(1-\frac{1}{2}\erfc\left(\frac{x}{2\sqrt{t}}-\sqrt{t}\right)\right) + e^{t+x}\frac{1}{2}\erfc\left(\frac{x}{2\sqrt{t}}+\sqrt{t}\right).
\end{split}
\end{equation*}
Besides, as $\erfc(x)\xrightarrow{x\rightarrow+\infty}0$, then for each $t>0$ one has $u(t,x) \sim e^{t-x}$ as $x\to+\infty$ and, moreover, $u(t,x) \sim e^{t-|x|}$ as $x\to\infty$.
\end{proof}

As analysing the convergence speed for the obtained solution is technically a sophisticated task, we will present only the numerical estimates of the norm decay later on. Even though it is not a rigorous proof, it gives hope that one can obtain a formal reasoning.

\section{Numerical experiments on model examples}
This chapter is devoted to the results of numerical experiments performed for the previously analysed initial conditions $u_0 = [x\to \sin(x)]$ and $u_0 = [x \to \exp(-|x|)$, and different Cauchy problems \eqref{eq:cauchy_tr} and \eqref{def:cauchy_heat}.

\subsection{Transport equation}
\label{sec:tr_num}

\subsubsection{Convergence speed for \boldmath{$u_0 = [x \to \sin(x)]$}}
\label{sec:sin_tr_conv}
First, we will analyse the convergence speed for $u_0=[x\mapsto\sin(x)]$.

\begin{figure}[H]
    \centering
    \includegraphics[width=0.7\textwidth]{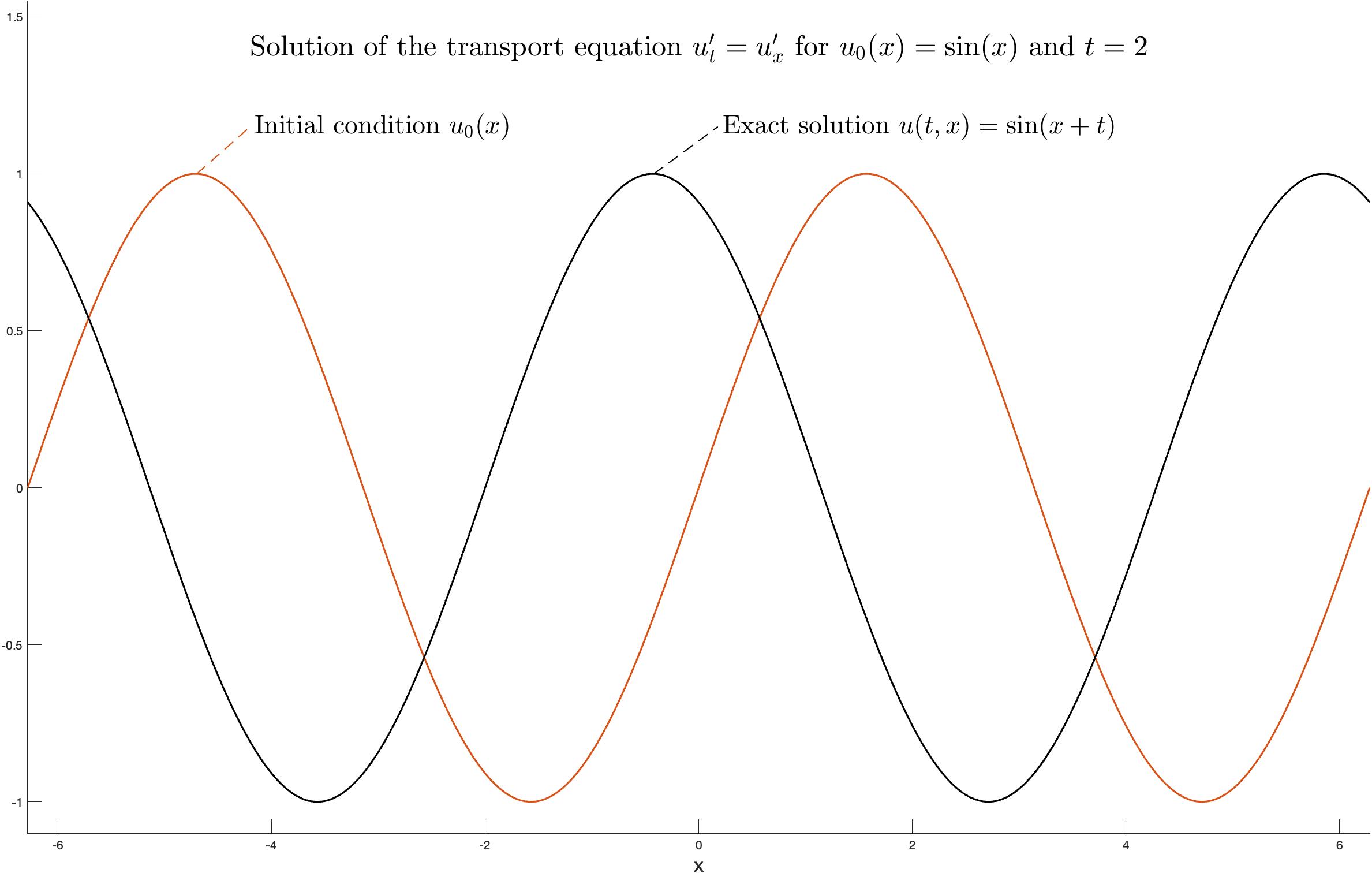}
    \caption{Graphs of initial condition and solution of the transport equation ($t = 2$)}
    \label{fig:tr_sin_solution}
\end{figure}

\begin{figure}[H]
    \centering
    \includegraphics[width=0.7\textwidth]{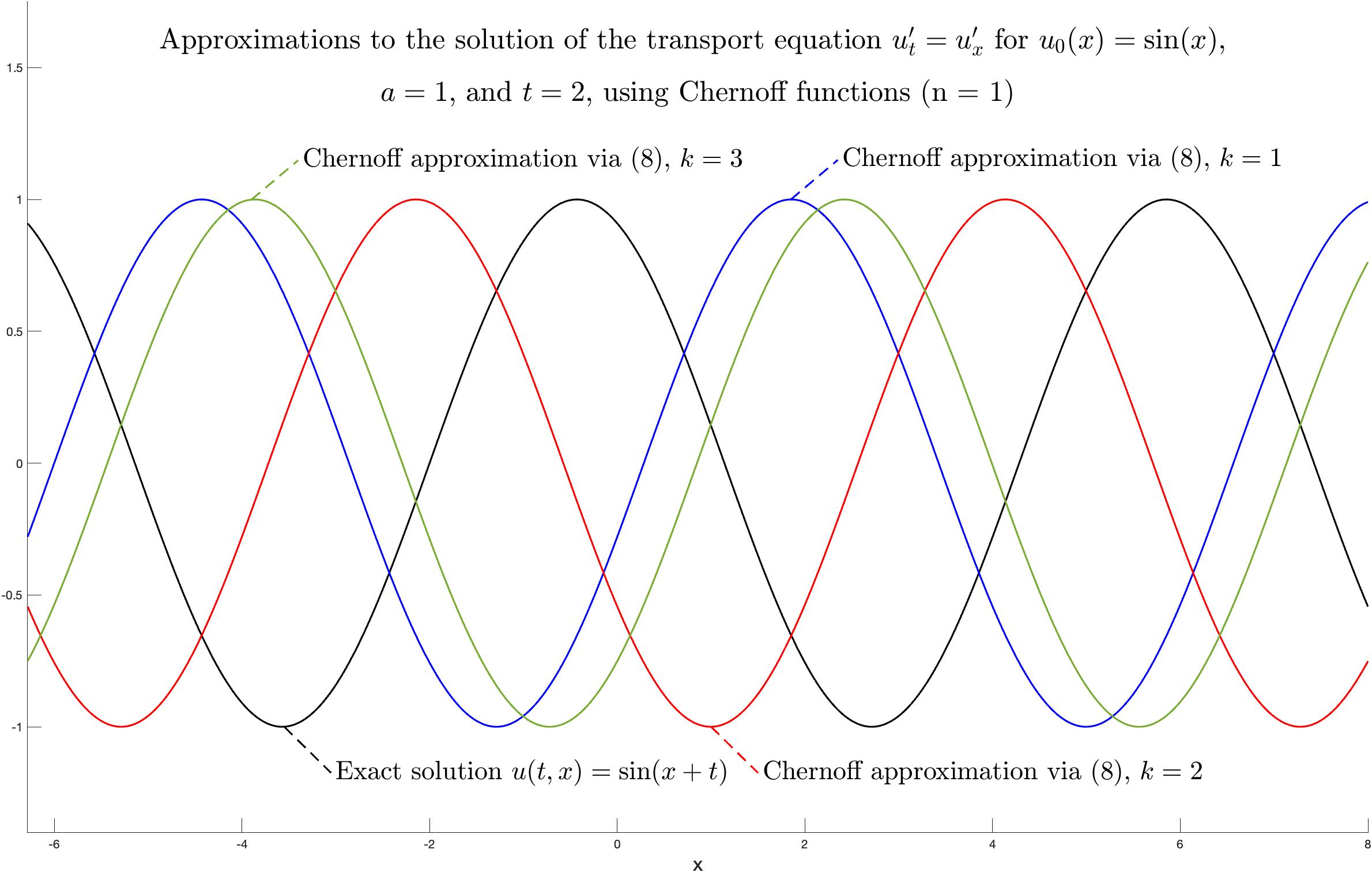}
    \caption{Graphs of approximations to the solution of the transport equation ($t = 2$, $n = 1$)}
    \label{fig:tr_sin_apprx_1}
\end{figure}

As the initial condition as well as the Chernoff approximations have period equal to $2\pi$, the uniform norm in $UC_b(\mathbb{R})$ is reached on the interval of this period. Note that the domain of the graphs presented below varies significantly and is chosen in favor of visual clarity.

So, Figure \ref{fig:tr_sin_solution} containes the graphs of the initial condition and the corresponding solution for $t = 2$. Whereas, Figures \ref{fig:tr_sin_apprx_1} and \ref{fig:tr_sin_apprx_5} represent some examples of Chernoff approximations with respect to the initial condition $u_0=[x\mapsto\sin(x)]$ and $t=2$ with composition degree $n = 1$ and $n = 5$ respectively.

\begin{figure}[H]
    \centering
    \includegraphics[width=0.75\textwidth]{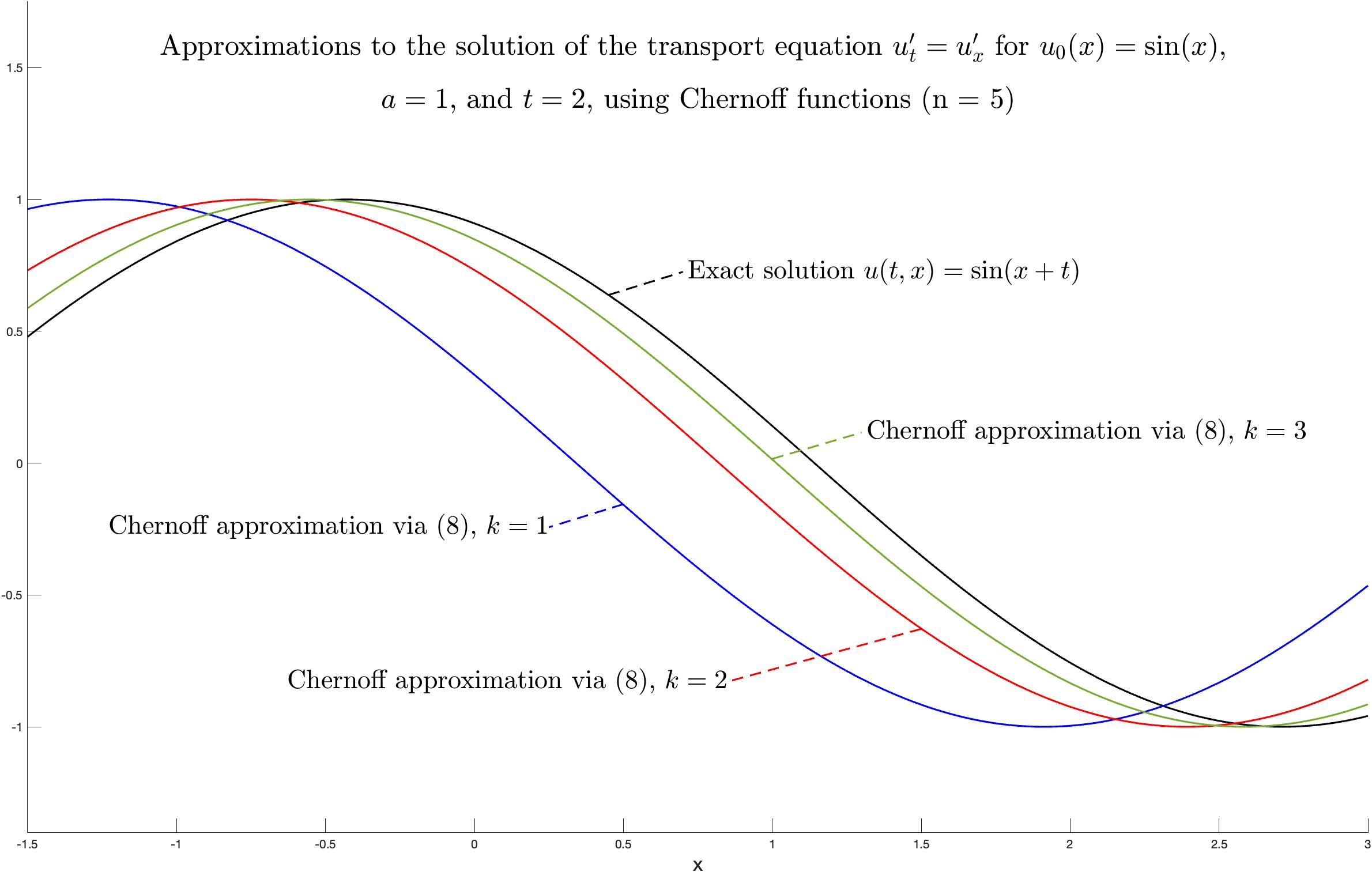}
    \caption{Graphs of approximations to the solution of the transport equation ($t = 2$, $n = 5$)}
    \label{fig:tr_sin_apprx_5}
\end{figure}

\begin{figure}[H]
    \centering
    \includegraphics[width=0.75\textwidth]{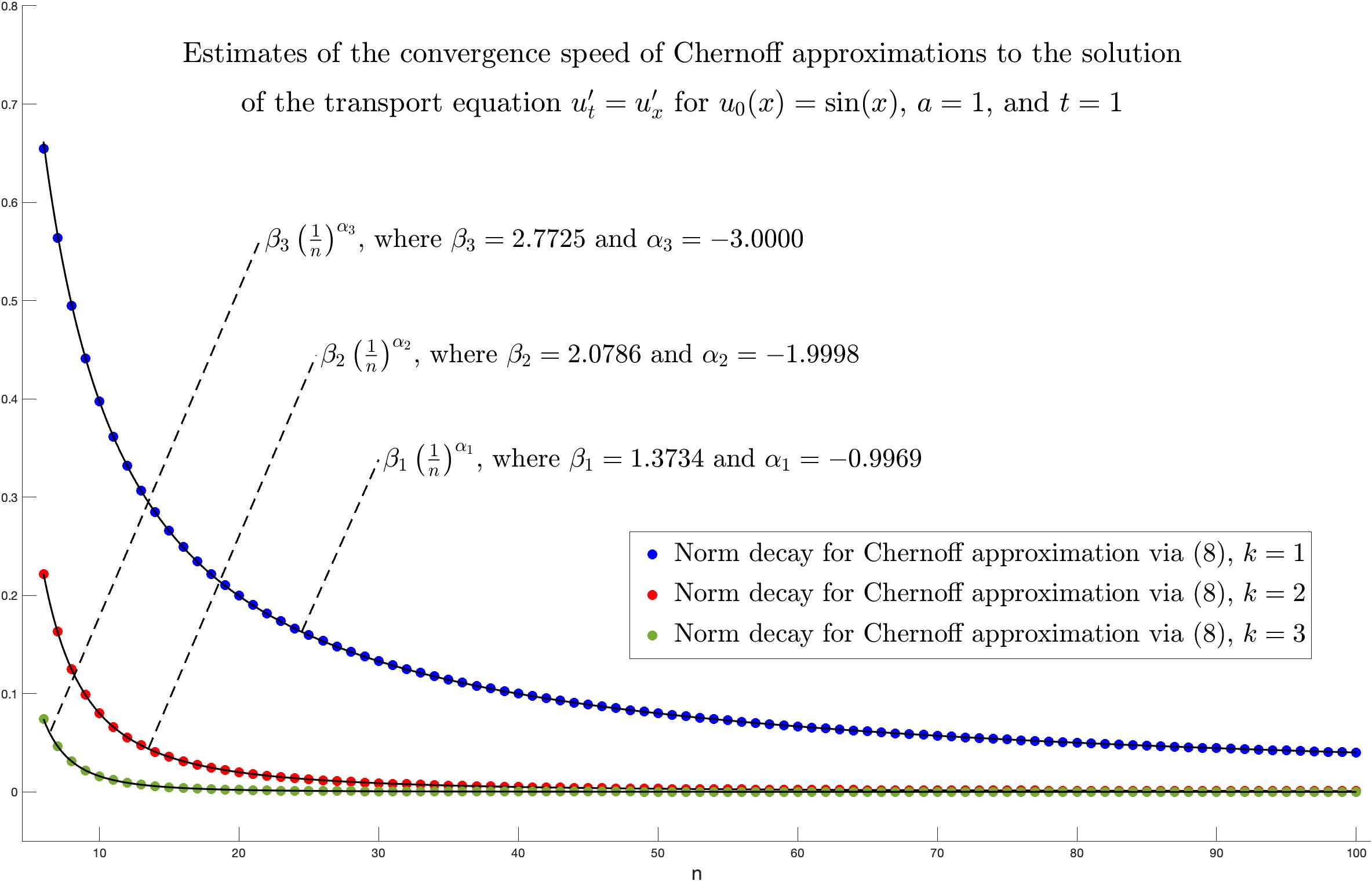}
    \caption{Graphs of estimates of the convergence speed ($t = 1$)}
    \label{fig:tr_sin_apprx_gen}
\end{figure}

Figure \ref{fig:tr_sin_apprx_gen} represents the graphs of convergence rate, i.e., the decay in the norm of the difference of the Chernoff approximations and the corresponding solution depending on the growth of composition degree.

\begin{figure}[H]
    \centering
    \includegraphics[width=0.71\textwidth]{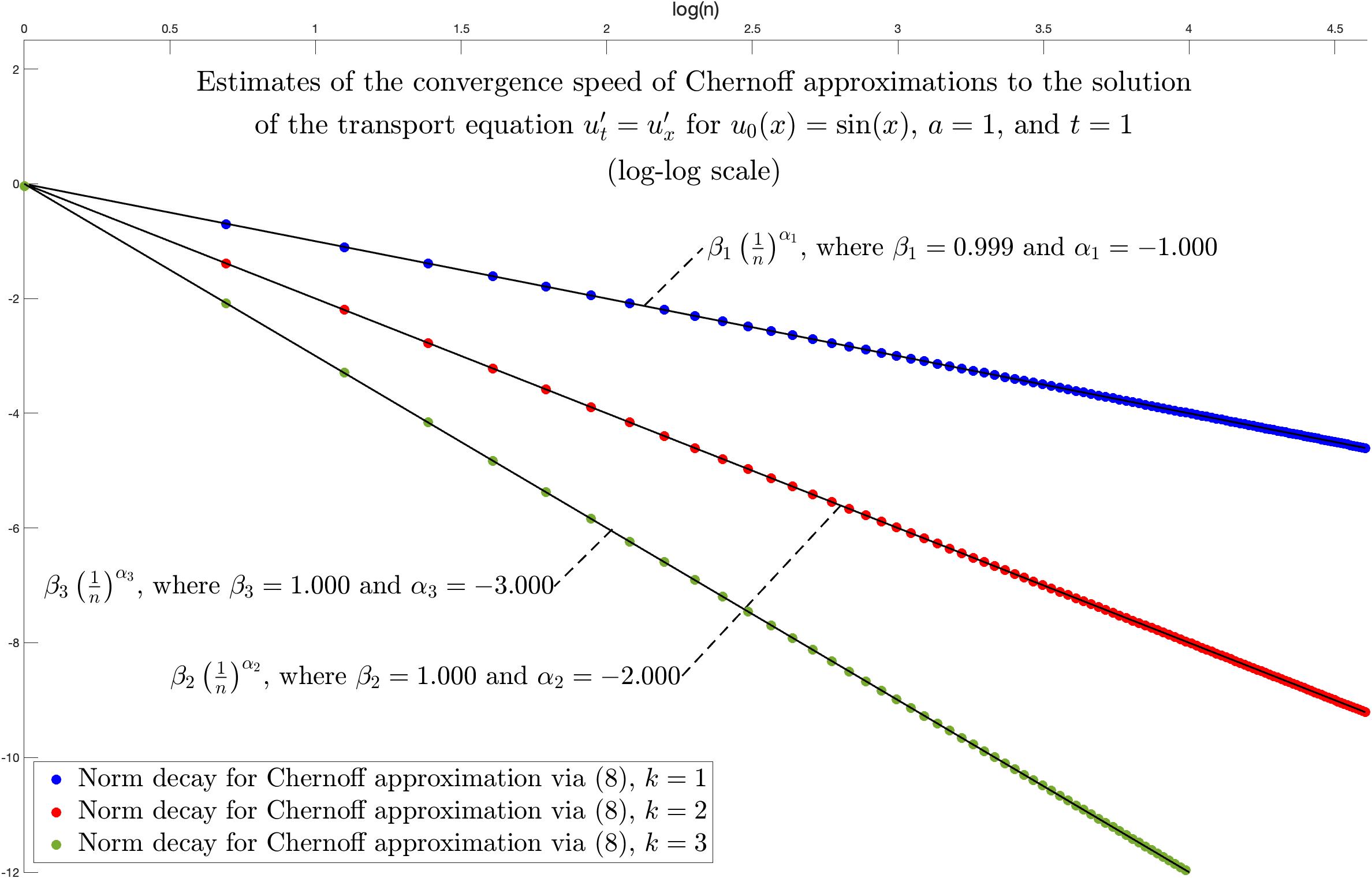}
    \caption{Graphs of estimates of the convergence speed in log-log scale ($t = 1$)}
    \label{fig:tr_sin_apprx_log}
\end{figure}

Apart from that, the above series of figures illustrates the results of a non-linear regression applied to analyse the order of approximation subspaces to which the examined initial condition belongs to. So, Figures \ref{fig:tr_sin_apprx_gen} and \ref{fig:tr_sin_apprx_log} using both standard and log-log scale represent the convergence rate and the corresponding obtained regression results for Chernoff functions from Theorem \ref{th:sin_tr}. 

\subsubsection{Convergence speed for \boldmath{$u_0 = [x \to \exp(-|x|)]$}}

\begin{figure}[H]
    \centering
    \includegraphics[width=0.71\textwidth]{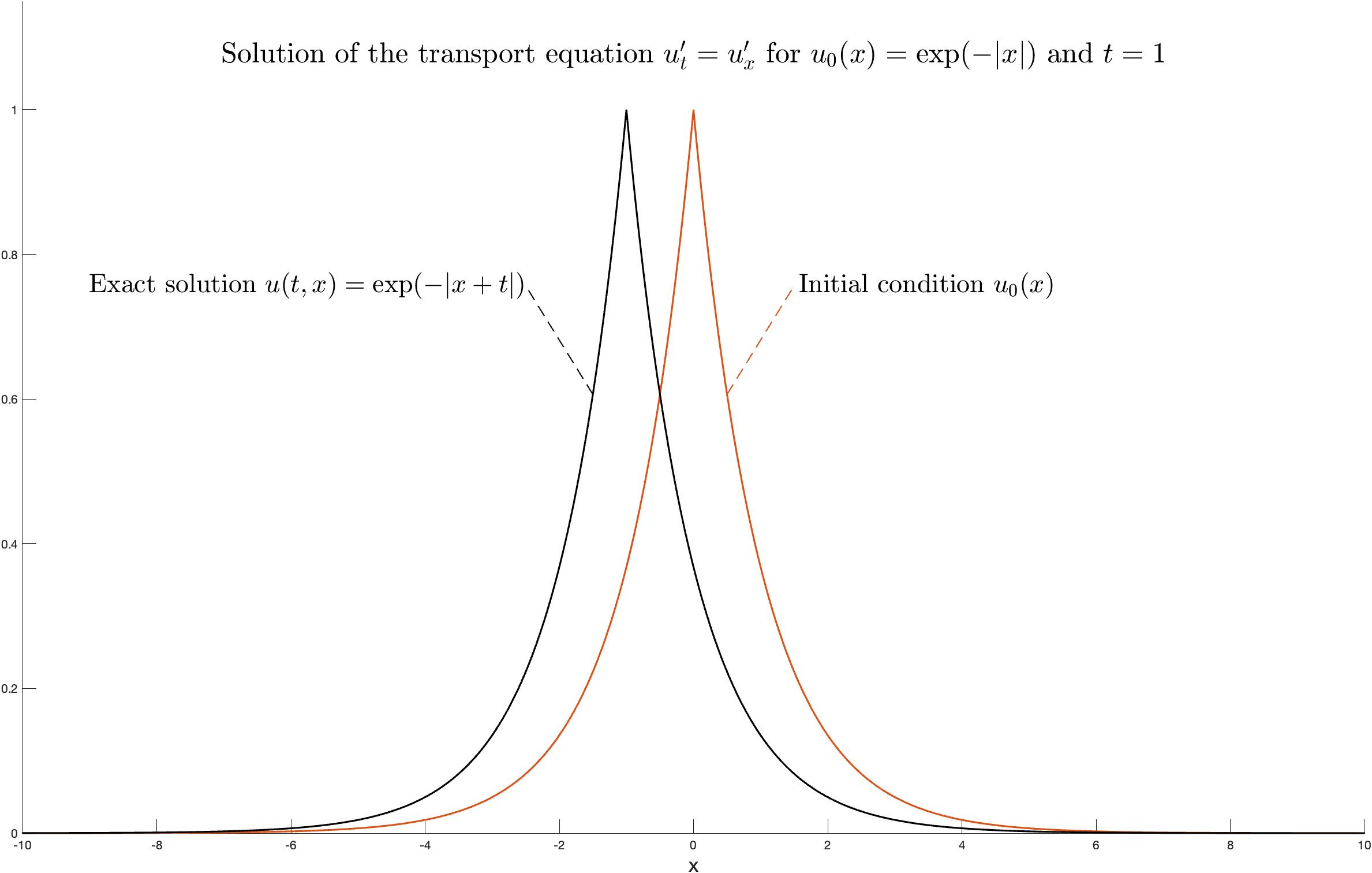}
    \caption{Graphs of initial condition and solution of the transport equation ($t = 1$)}
    \label{fig:tr_exp_solution}
\end{figure}

Second, we will analyse the convergence rate for $u_0=[x\mapsto e^{-|x|}]$. The graphs of the initial condition and examined solution for $t = 1$ are shown in Figure \ref{fig:tr_exp_solution}. 

\begin{figure}[H]
    \centering
    \includegraphics[width=0.8\textwidth]{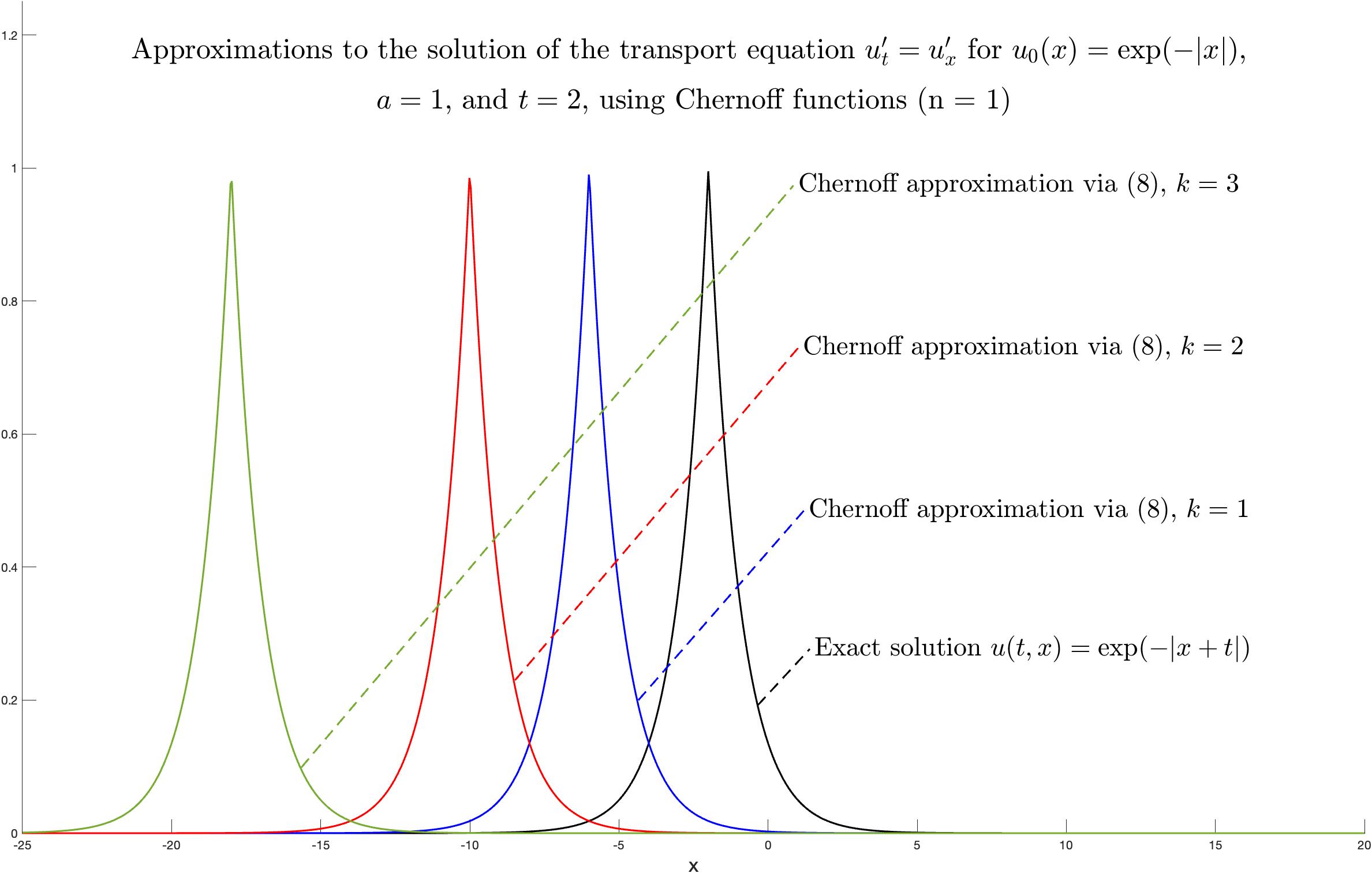}
    \caption{Graphs of approximations to the solution of the transport equation ($t = 2$, $n = 1$)}
    \label{fig:tr_exp_apprx_1}
\end{figure}

\begin{figure}[H]
    \centering
    \includegraphics[width=0.75\textwidth]{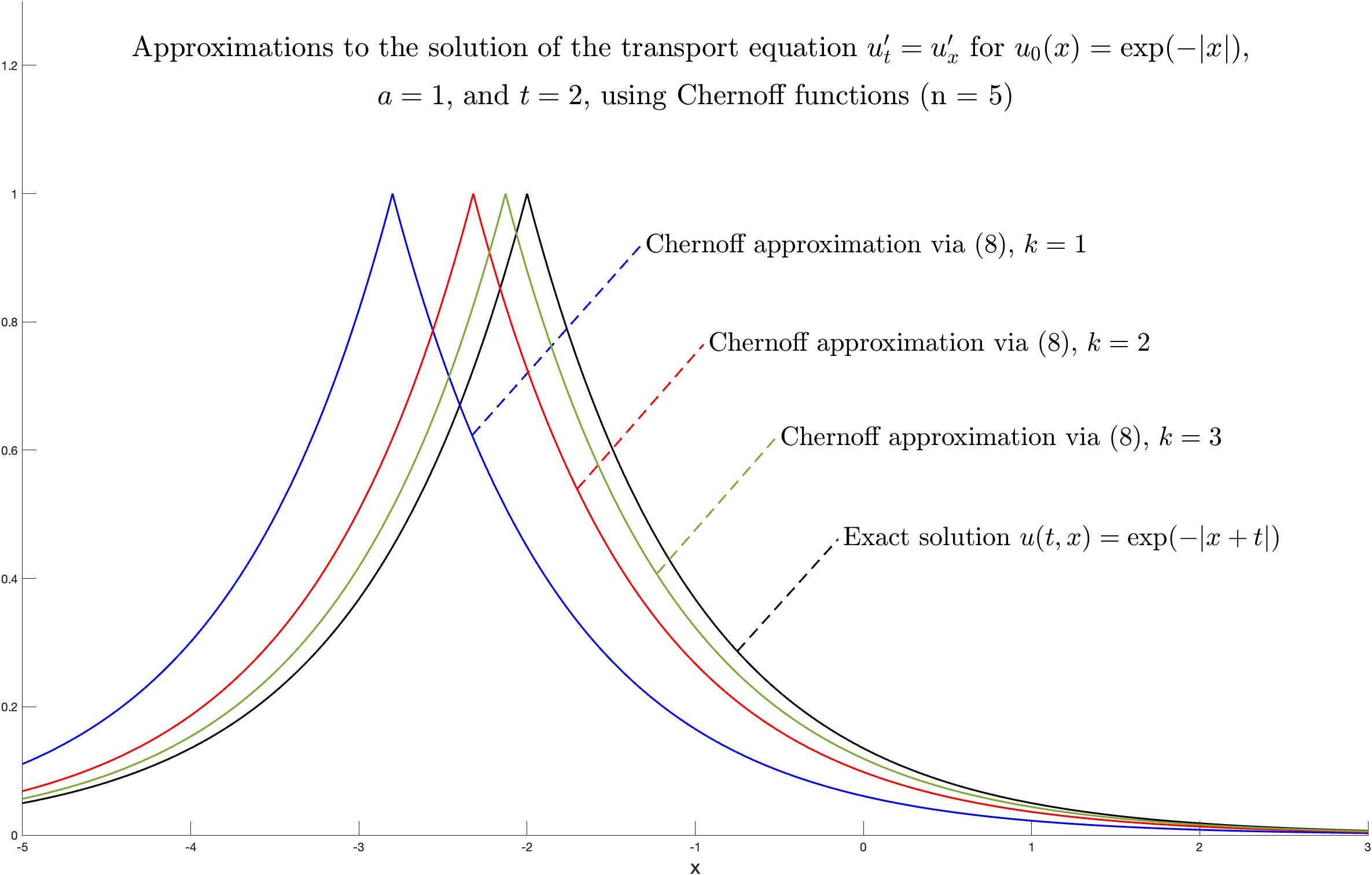}
    \caption{Graphs of approximations to the solution of the transport equation ($t = 2$, $n = 5$)}
    \label{fig:tr_exp_apprx_5}
\end{figure}

Figures \ref{fig:tr_exp_apprx_1} and  \ref{fig:tr_exp_apprx_5} represent some model examples of Chernoff approximations based on the initial condition $u_0=[x\mapsto e^{-|x|}]$ for $t = 2$ with composition degree $n = 1$ and $n = 5$ respectively.

As the initial condition and the Chernoff approximations are decaying outside of some segment, we assume that the value of the uniform norm of the difference is reached on the interval of correctly selected length (e.g., $[-5, 5]$). So, Figure \ref{fig:tr_exp_apprx_gen} provides a graph of convergence rate, i.e., the decay of the norm of the difference depending on the growth of composition degree.

\begin{figure}[H]
    \centering
    \includegraphics[width=0.75\textwidth]{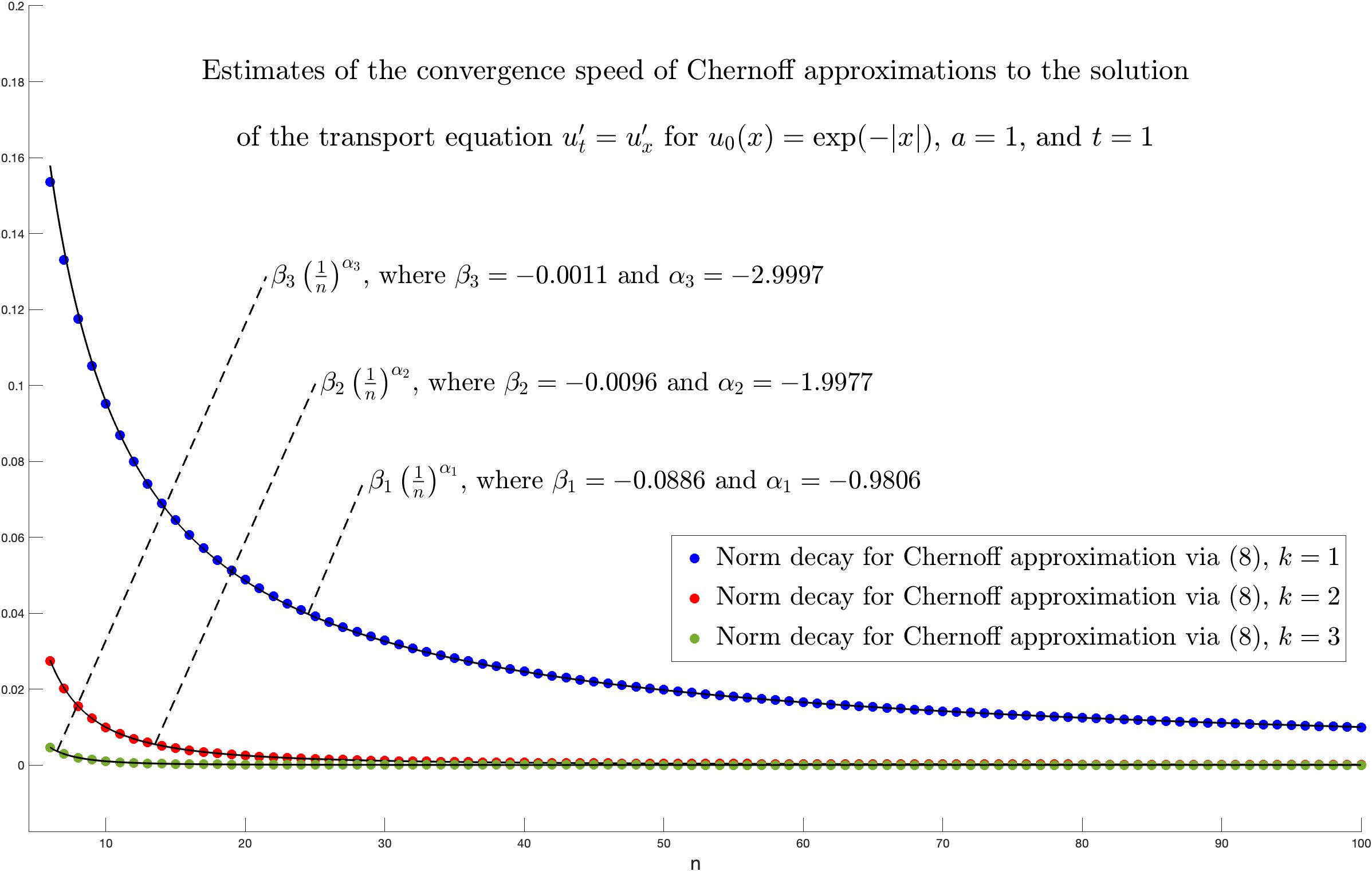}
    \caption{Graphs of estimates of the convergence speed ($t = 1$)}
    \label{fig:tr_exp_apprx_gen}
\end{figure}

\begin{figure}[H]
    \centering
    \includegraphics[width=0.75\textwidth]{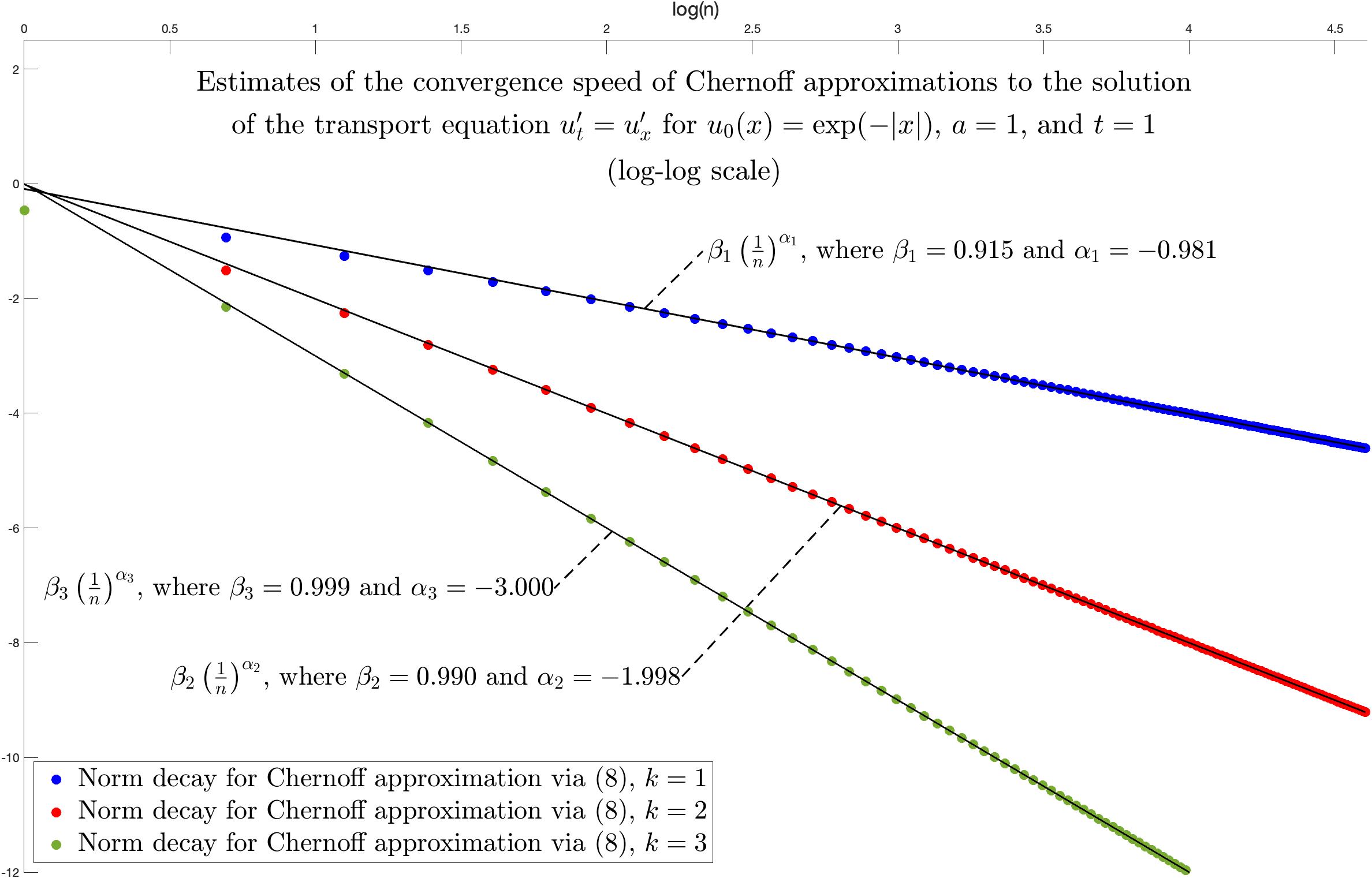}
    \caption{Graphs of estimates of the convergence speed in log-log scale ($t = 1$)}
    \label{fig:tr_exp_apprx_log}
\end{figure}

More than that, Figures \ref{fig:tr_exp_apprx_gen} and \ref{fig:tr_exp_apprx_log}, using both standard and log-log scale, illustrate the results of a non-linear regression applied to analyse the approximation subspaces to which the examined initial condition belongs to for Chernoff functions from Theorem \ref{th:sin_tr}. 

\subsubsection{Numerical demonstration of slow convergence for \boldmath{$u_0 = [x\to\sin(x)]$}}
Third, we will analyse the convergence rate for $u_0=[x\mapsto \sin(x)]$ and Chernoff function $G$ \eqref{eq:tr_apprx_slow} given in Theorem \ref{th:sin_slow_tr} with $w(t) = \frac{1}{t^{\gamma}}$ for $\gamma = \frac{1}{2}$, $\gamma = \frac{1}{3}$, and $\gamma = \frac{1}{6}$.

\begin{figure}[H]
    \centering
    \captionsetup{justification=centering}
    \includegraphics[width=0.75\textwidth]{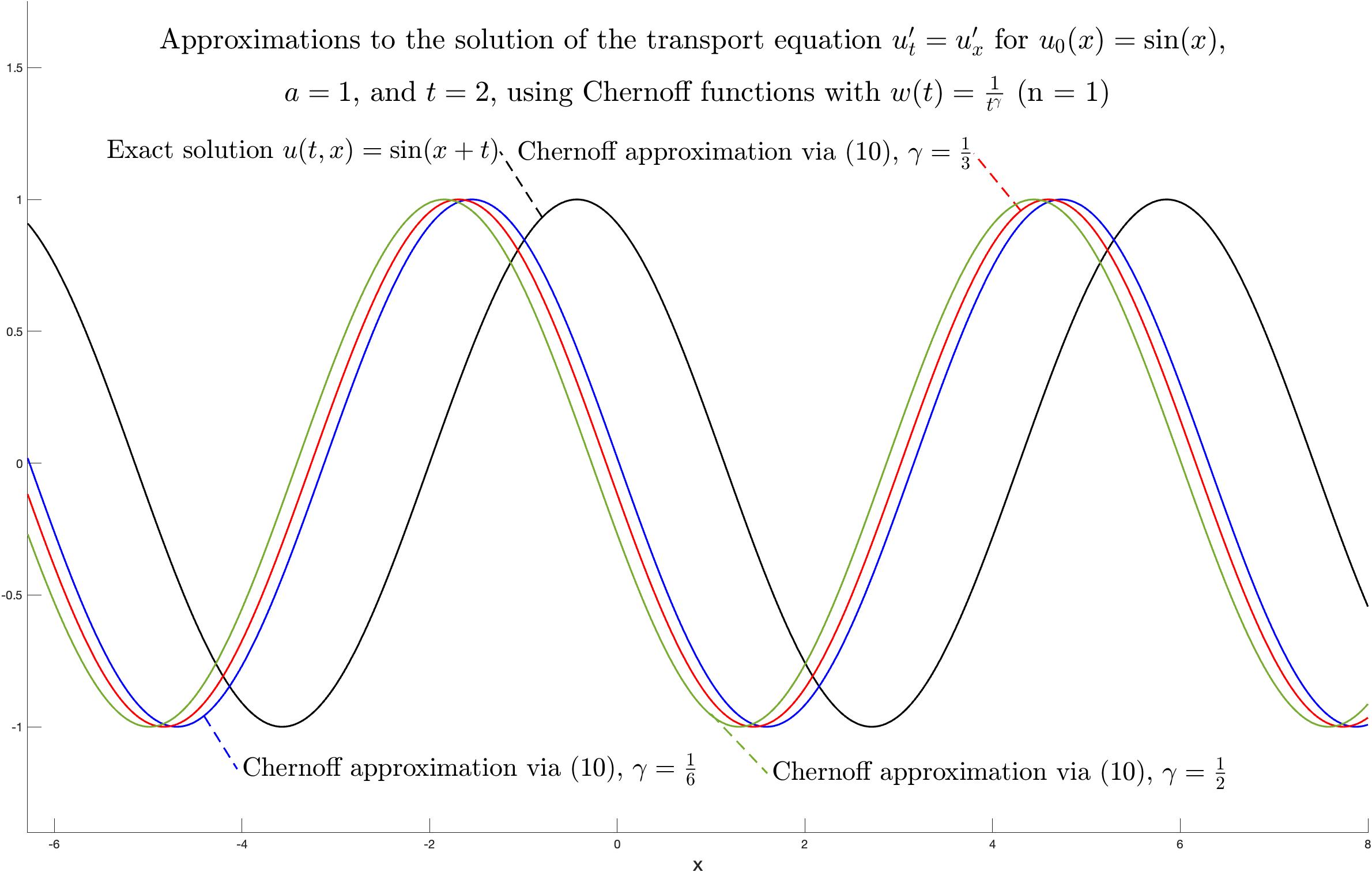}
    \caption{Graphs of approximations to the solution of the transport equation ($t = 2$, $n = 1$)}
    \label{fig:tr_sin_slow_apprx_1}
\end{figure}

\begin{figure}[H]
    \centering
    \captionsetup{justification=centering}
    \includegraphics[width=0.75\textwidth]{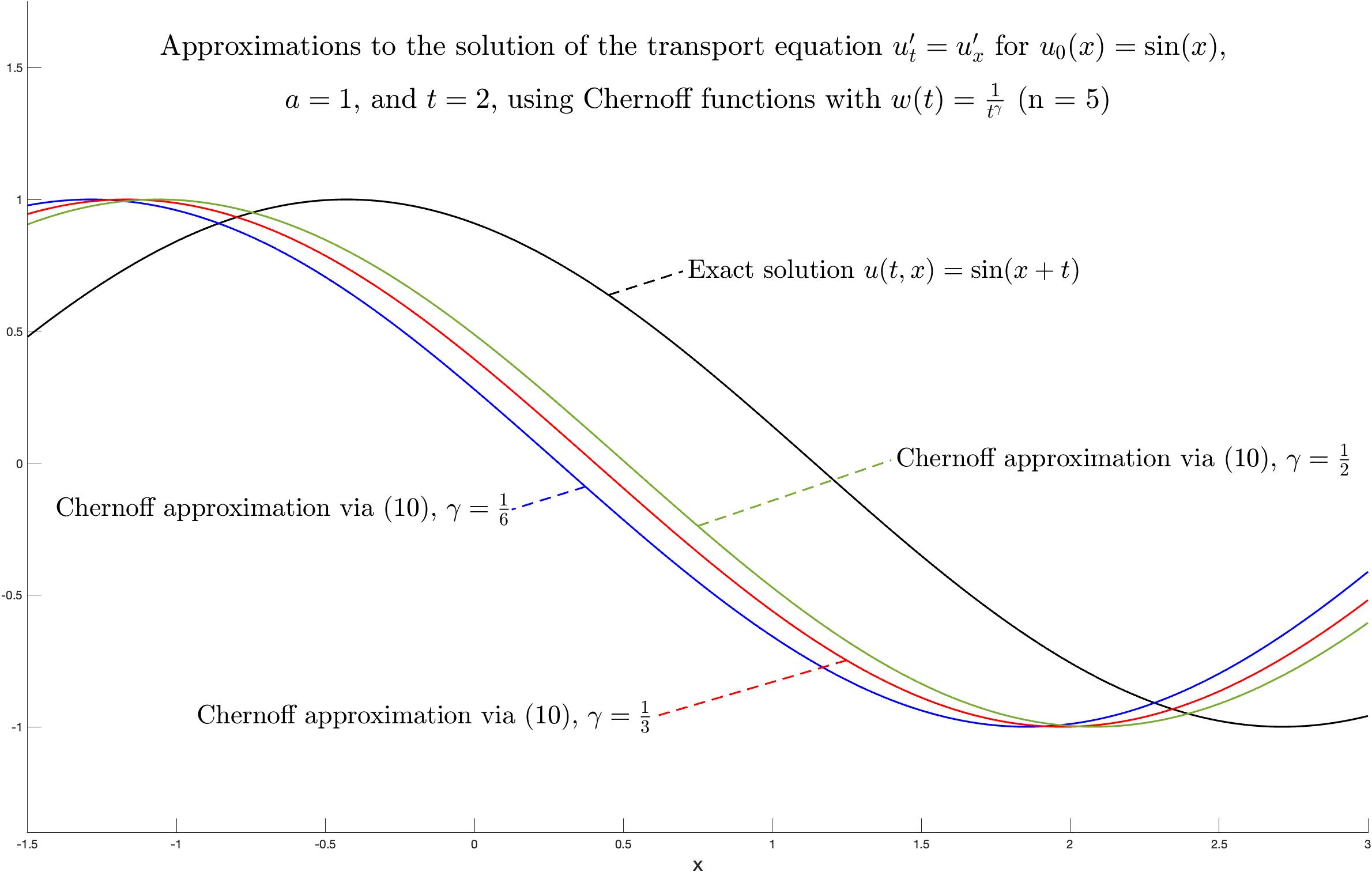}
    \caption{Graphs of approximations to the solution of the transport equation ($t = 2$, $n = 5$)}
    \label{fig:tr_sin_slow_apprx_5}
\end{figure}

Figures \ref{fig:tr_sin_slow_apprx_1} and \ref{fig:tr_sin_slow_apprx_5} demonstrate some common examples of Chernoff approximations for the initial condition $u_0=[x\mapsto \sin(x)]$ and $t=2$ with composition degree $n = 1$ and $n = 5$ respectively.

Here, we will again exploit the periodic nature of the solution and the Chernoff approximations and suggest that the uniform norm is reached on the properly selected interval. So, Figure \ref{fig:tr_sin_slow_apprx_gen} provides a graph of convergence rate when $t = 2$.

Apart from that, the below series of figures also illustrates the order of approximation subspaces to which the examined initial condition belongs to. Thus, Figures \ref{fig:tr_sin_slow_apprx_gen} and \ref{fig:tr_sin_slow_apprx_log}, using both standard and log-log scale, represent the convergence rate and the corresponding obtained regression results for Chernoff functions from Theorem \ref{th:sin_slow_tr} with $\gamma = \frac{1}{2}$, $\gamma = \frac{1}{3}$, and $\gamma = \frac{1}{6}$. 

\begin{figure}[H]
    \centering
    \captionsetup{justification=centering}
    \includegraphics[width=0.75\textwidth]{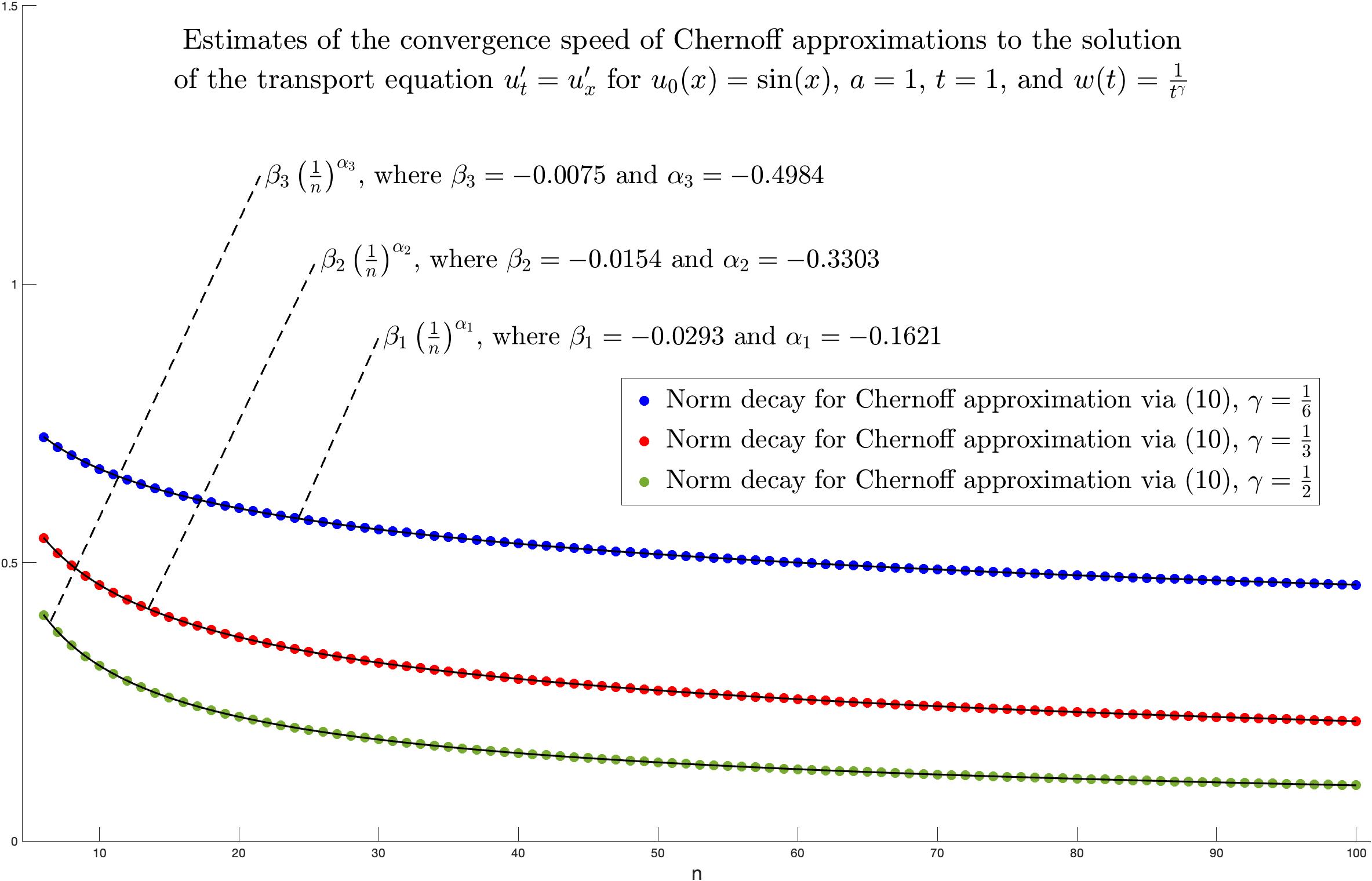}
    \caption{Graphs of estimates of the convergence speed ($t = 1$)}
    \label{fig:tr_sin_slow_apprx_gen}
\end{figure}

\begin{figure}[H]
    \centering
    \captionsetup{justification=centering}
    \includegraphics[width=0.75\textwidth]{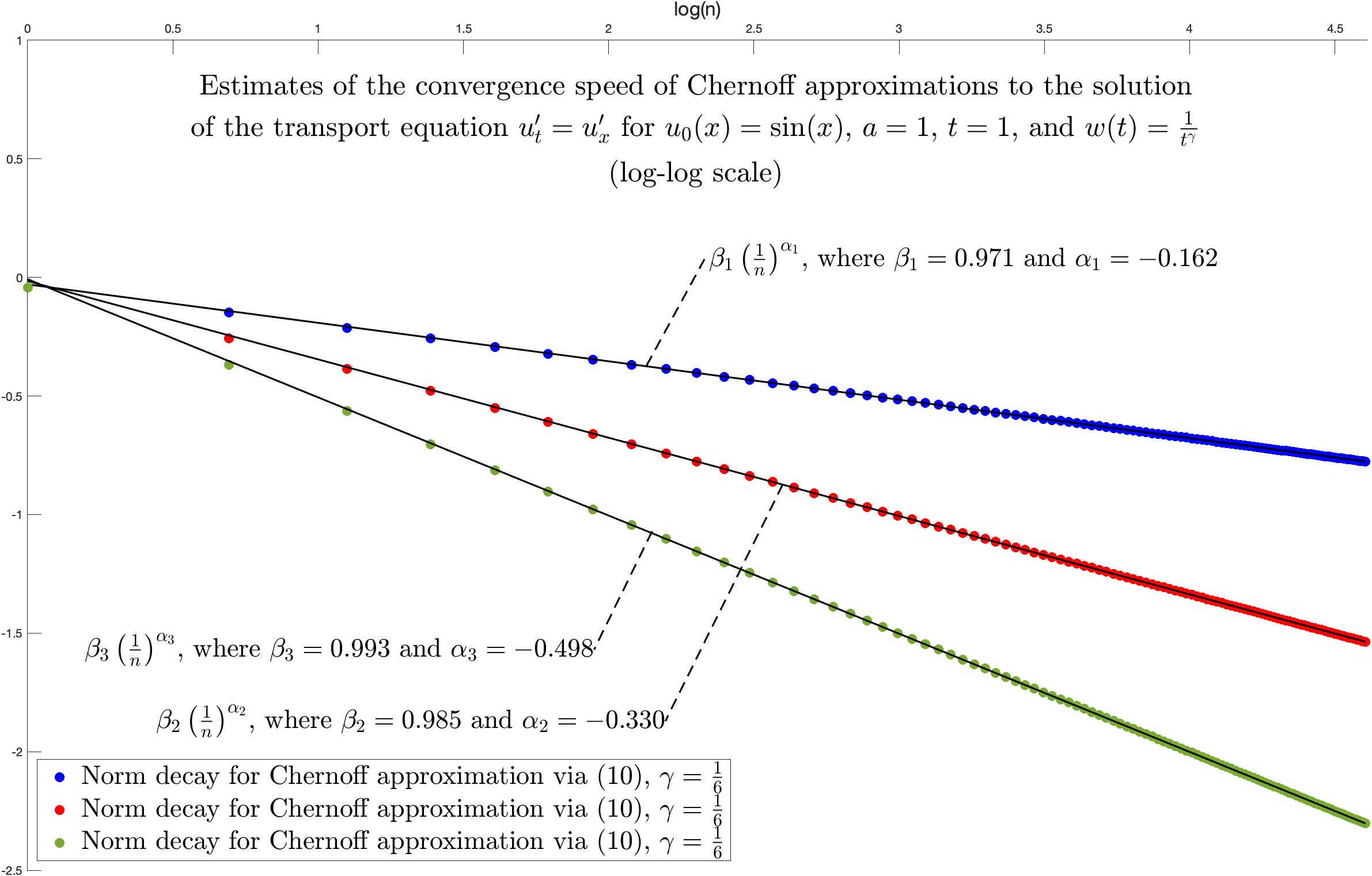}
    \caption{Graphs of estimates of the convergence speed with log-log scale ($t = 1$)}
    \label{fig:tr_sin_slow_apprx_log}
\end{figure}

One can clearly notice that, independent of the considered initial value of the equation, all of the approximations performed as if the examined model function belonged to the domain of generator, i.e., even $u_0 = [x\to\exp(-|x|)]\notin UC^{1}_{b}(\mathbb{R})$ belongs to the higher order approximation subspaces. We will examine later in this chapter that for the heat equation this is not the case already.

\subsection{Heat equation}
\label{sec:heat_num}

\subsubsection{Convergence speed for  \boldmath{$u_0 = [x \to \sin(x)]$}}

\begin{figure}[H]
    \centering
    \includegraphics[width=0.75\textwidth]{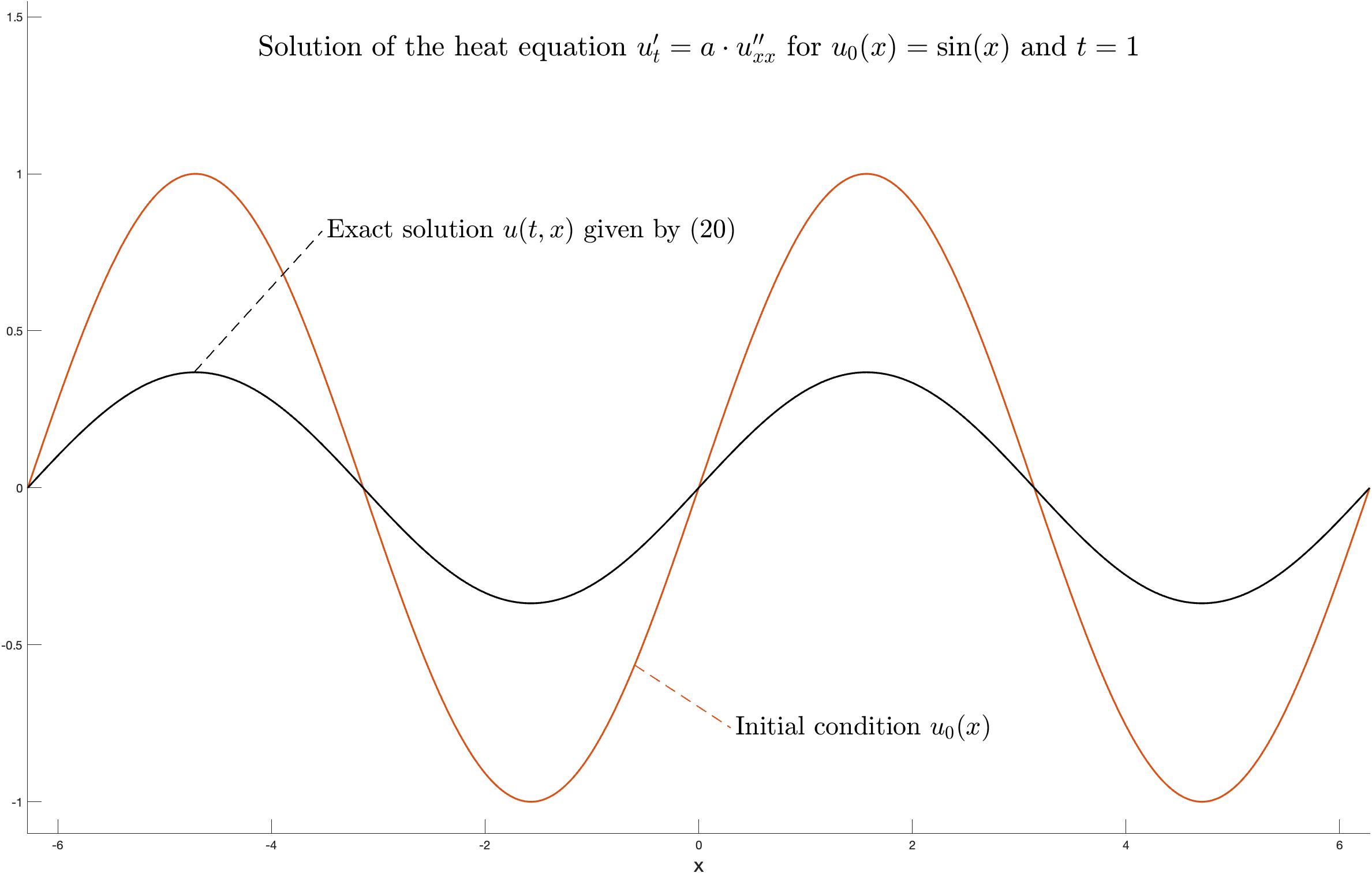}
    \caption{Graphs of initial condition and solution of the heat equation ($t = 1$)}
    \label{fig:heat_sin_solution}
\end{figure}

\begin{figure}[H]
    \centering
    \includegraphics[width=0.75\textwidth]{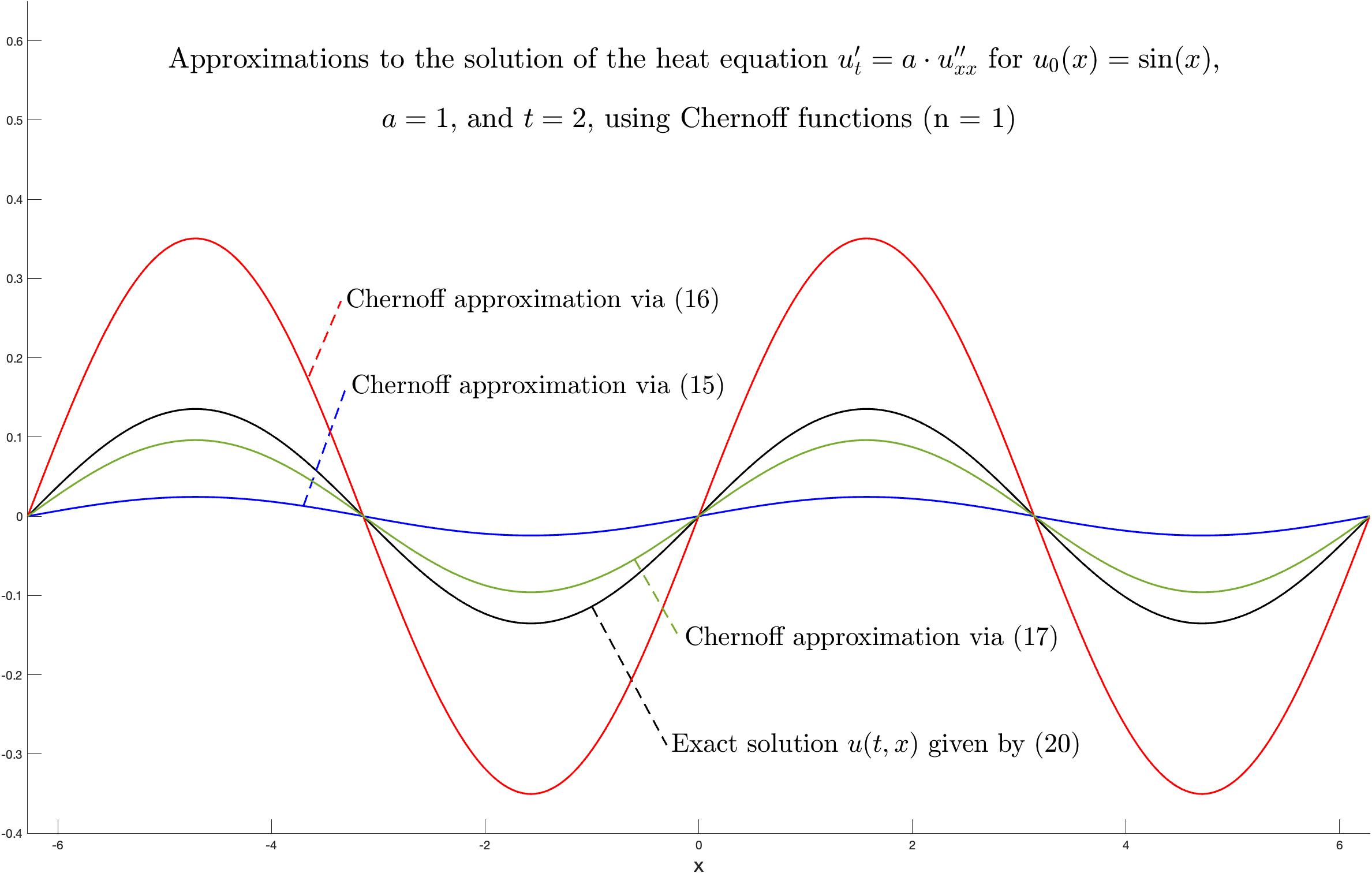}
    \caption{Graphs of approximations to the solution of the heat equation ($t = 2$, $n = 1$)}
    \label{fig:heat_sin_apprx_1}
\end{figure}

Here, we will analyse the convergence speed of Chernoff approximations defined by \eqref{eq:chernoff_heat_first}, \eqref{eq:chernoff_heat_second}, and \eqref{eq:chernoff_heat_third} for $u_0=[x\mapsto \sin(x)]$ to the exact solution given by \eqref{eq:heat_semigroup_sin}.

The graphs of the initial condition and investigated solution for $t=1$ are shown in Figure \ref{fig:heat_sin_solution}. 

\begin{figure}[H]
    \centering
    \includegraphics[width=0.75\textwidth]{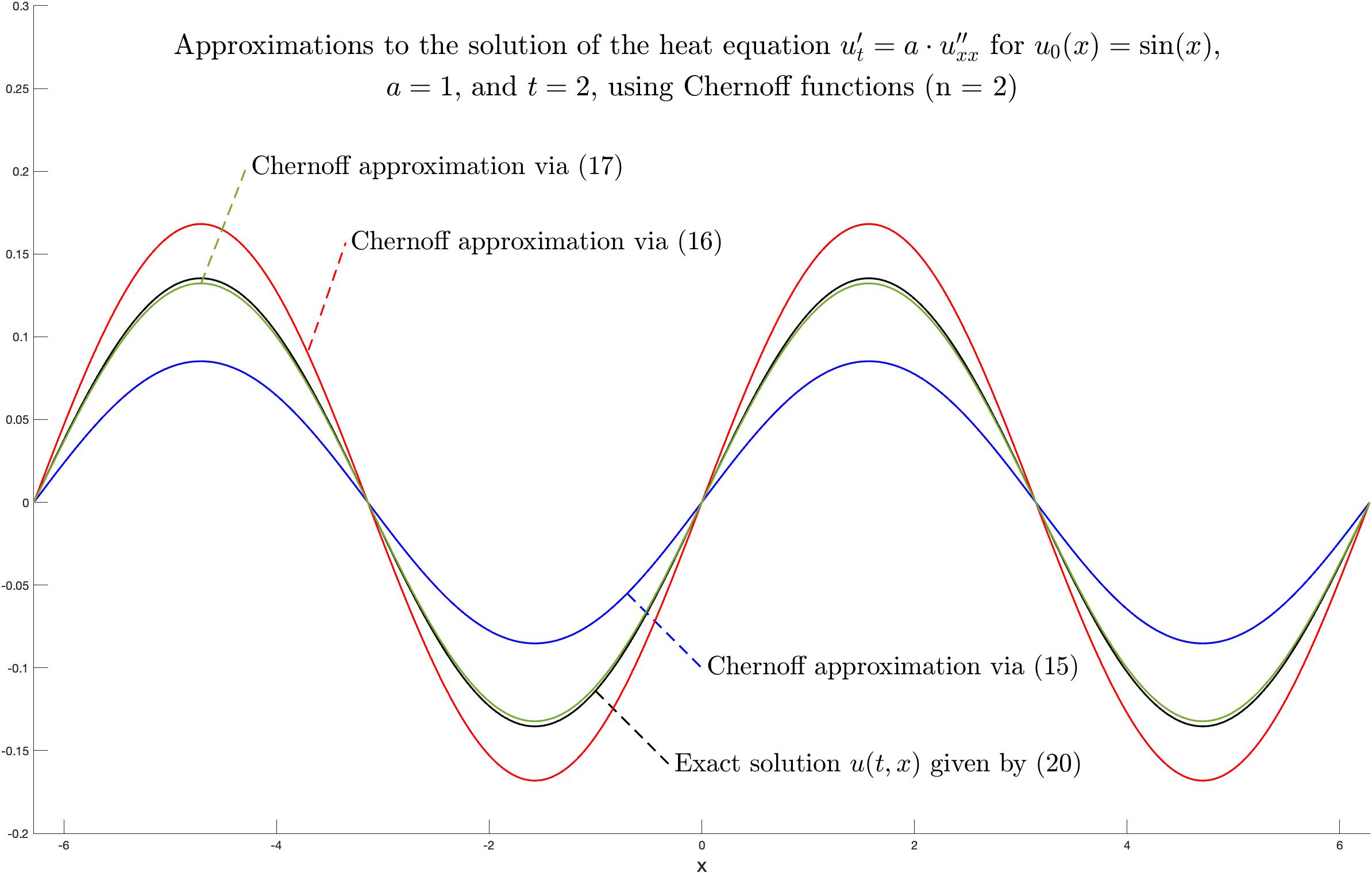}
    \caption{Graphs of approximations to the solution of the heat equation ($t = 2$, $n = 2$)}
    \label{fig:heat_sin_apprx_2}
\end{figure}

Whereas, Figures \ref{fig:heat_sin_apprx_1} and \ref{fig:heat_sin_apprx_2} represent some model examples of Chernoff approximations for the initial condition $u_0 = [x\mapsto \sin(x)]$ and $t=2$ with composition degree $n = 1$ and $n = 2$ respectively.

Exploiting one more time the periodic nature of the initial condition and the Chernoff approximations, we state that the uniform norm in $UC_b(\mathbb{R})$ is reached on the interval of the proper period, e.g., $[0, 2\pi]$. 

\begin{figure}[H]
    \centering
    \includegraphics[width=0.725\textwidth]{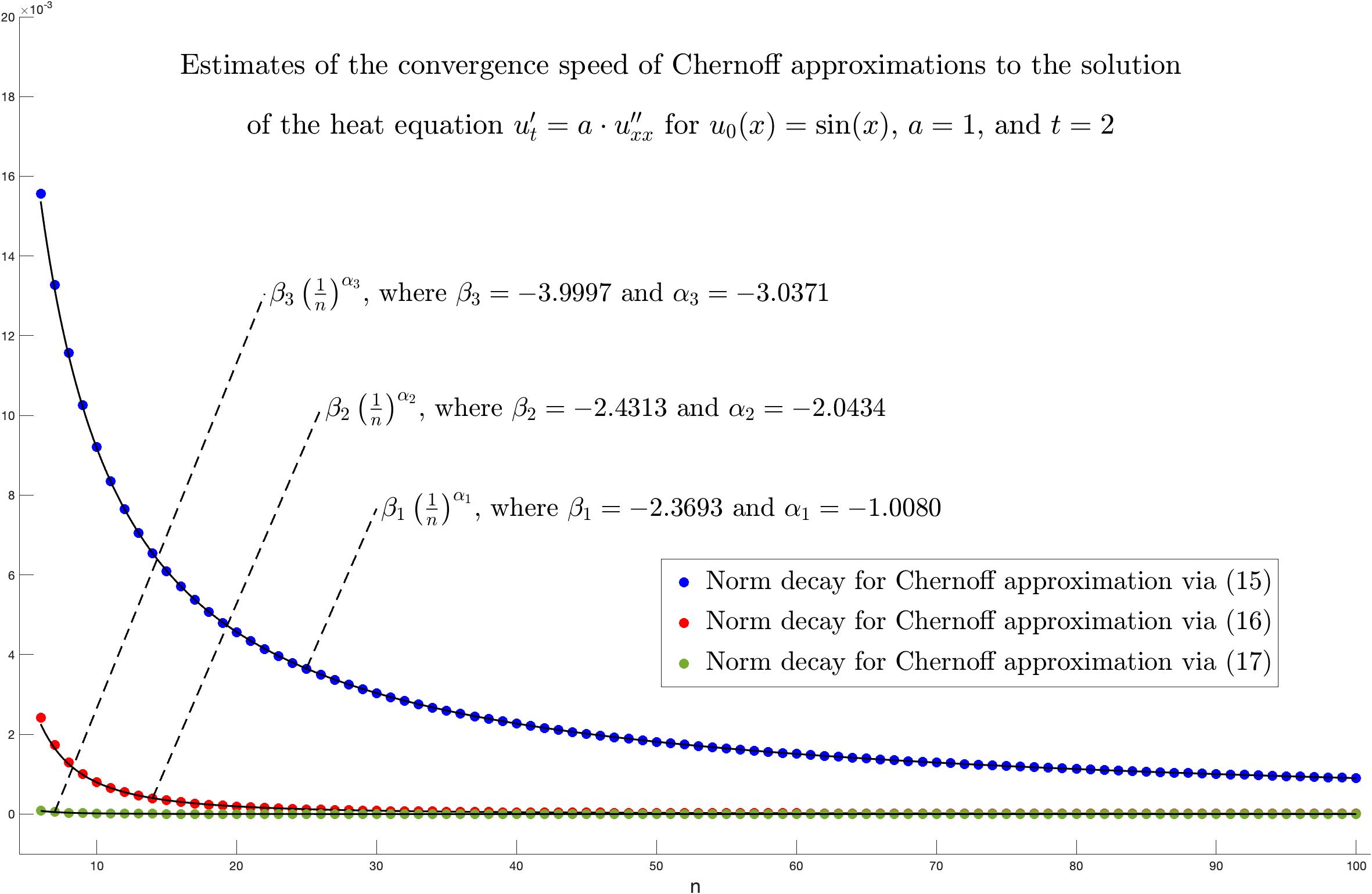}
    \caption{Graphs of estimates of the convergence speed ($t = 2$)}
    \label{fig:heat_sin_apprx_gen}
\end{figure}

\begin{figure}[H]
    \centering
    \includegraphics[width=0.75\textwidth]{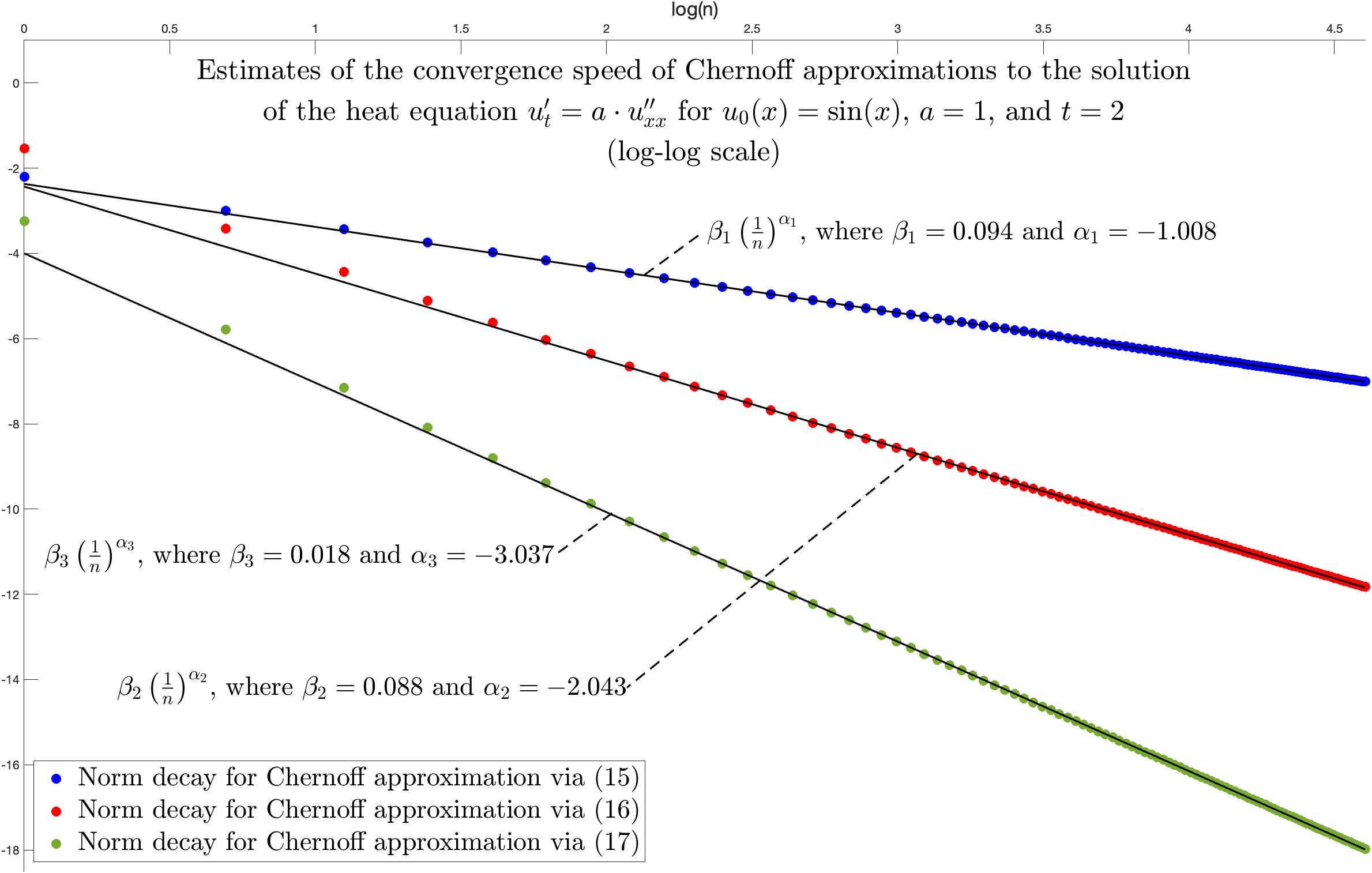}
    \caption{Graphs of estimates of the convergence speed with log-log scale ($t = 2$)}
    \label{fig:heat_sin_apprx_log}
\end{figure}

Figures \ref{fig:heat_sin_apprx_gen} and \ref{fig:heat_sin_apprx_log}, using both standard and logarithmic scale, represent the convergence rate and the corresponding obtained regression results for Chernoff functions from Theorem \ref{th:sin_first_heat} and Proposition \ref{th:sin_second_heat} for $t = 2$. Hence, one we can state that the results of numerical experiments confirmed the estimates obtained in Section 3.

\subsubsection{Convergence speed for \boldmath{$u_0 = [x \to \exp(-|x|)]$}}

\begin{figure}[H]
    \centering
    \includegraphics[width=0.75\textwidth]{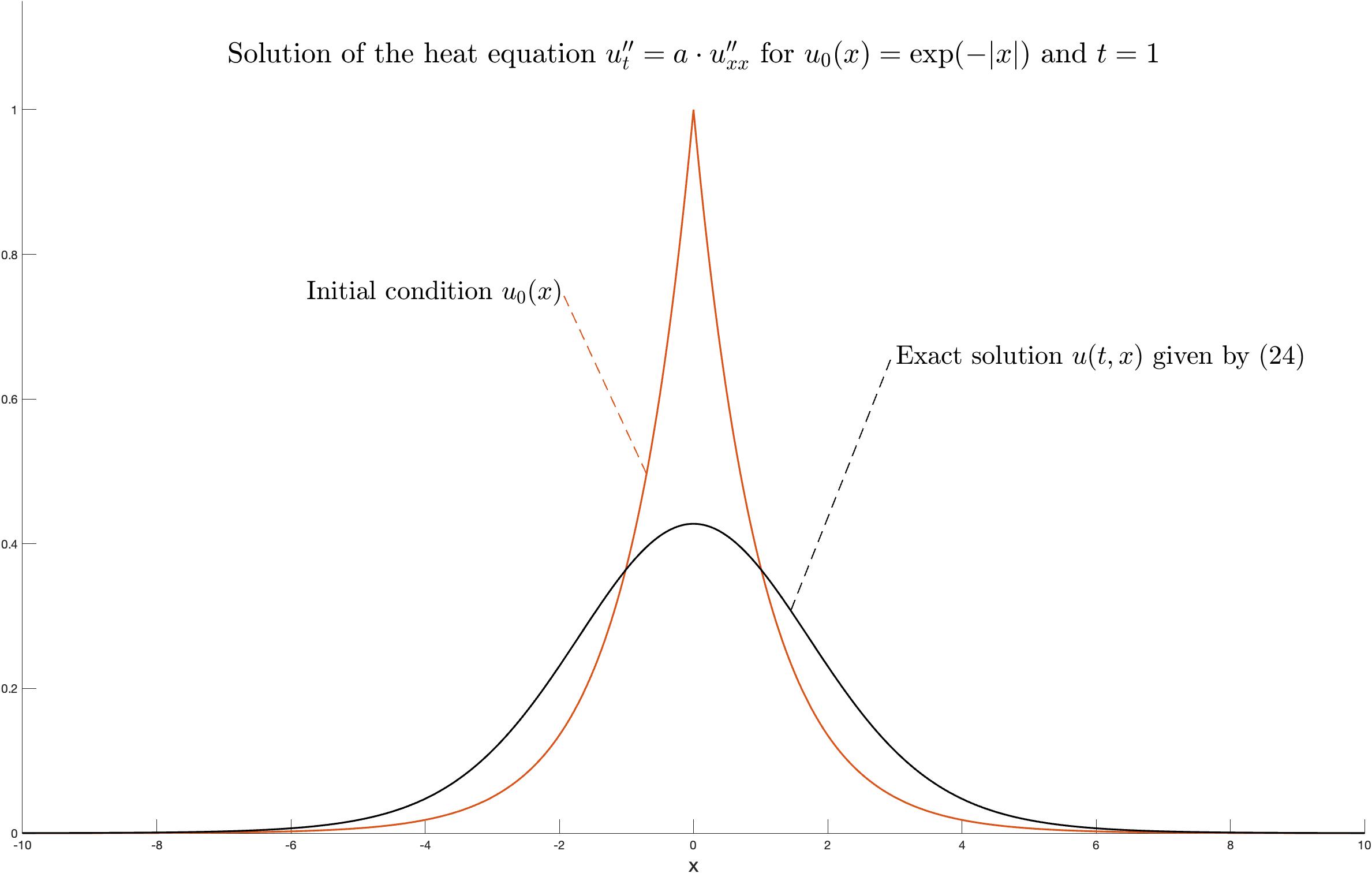}
    \caption{Graphs of initial condition and solution of the heat equation ($t = 1$)}
    \label{fig:heat_exp_solution}
\end{figure}

Here, we will analyse the convergence rate for the initial condition $u_0 = [x\mapsto e^{-|x|}]$.
The graphs of the initial condition and solution given by the $C_0$-semigroup for $t = 1$ are shown in Figure \ref{fig:heat_exp_solution}. 

\begin{figure}[H]
    \centering
    \includegraphics[width=0.75\textwidth]{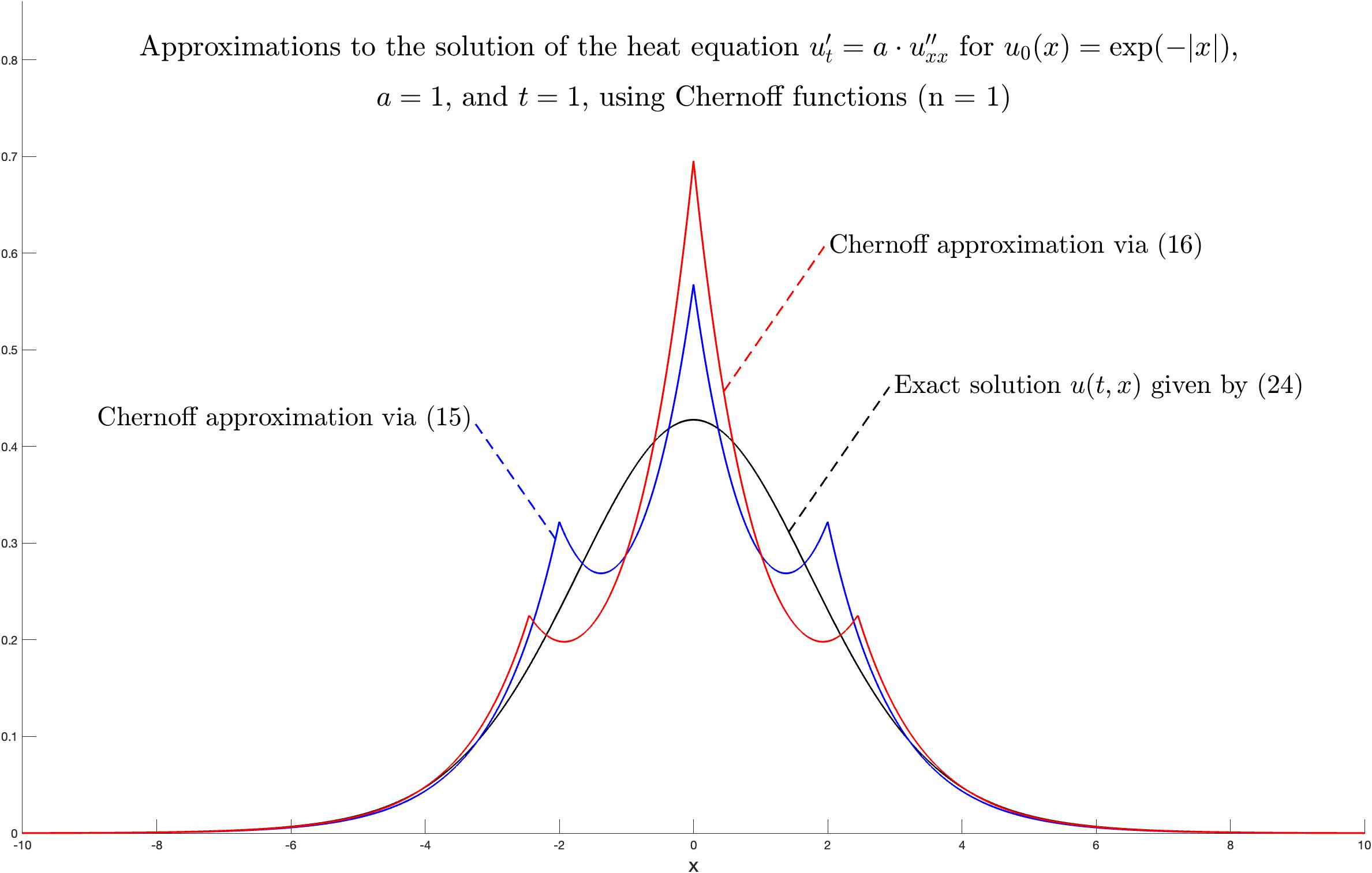}
    \caption{Graphs of approximations to the solution of the heat equation ($t = 1$, $n = 1$)}
    \label{fig:heat_exp_apprx_1}
\end{figure}
\begin{figure}[H]
    \centering
    \includegraphics[width=0.75\textwidth]{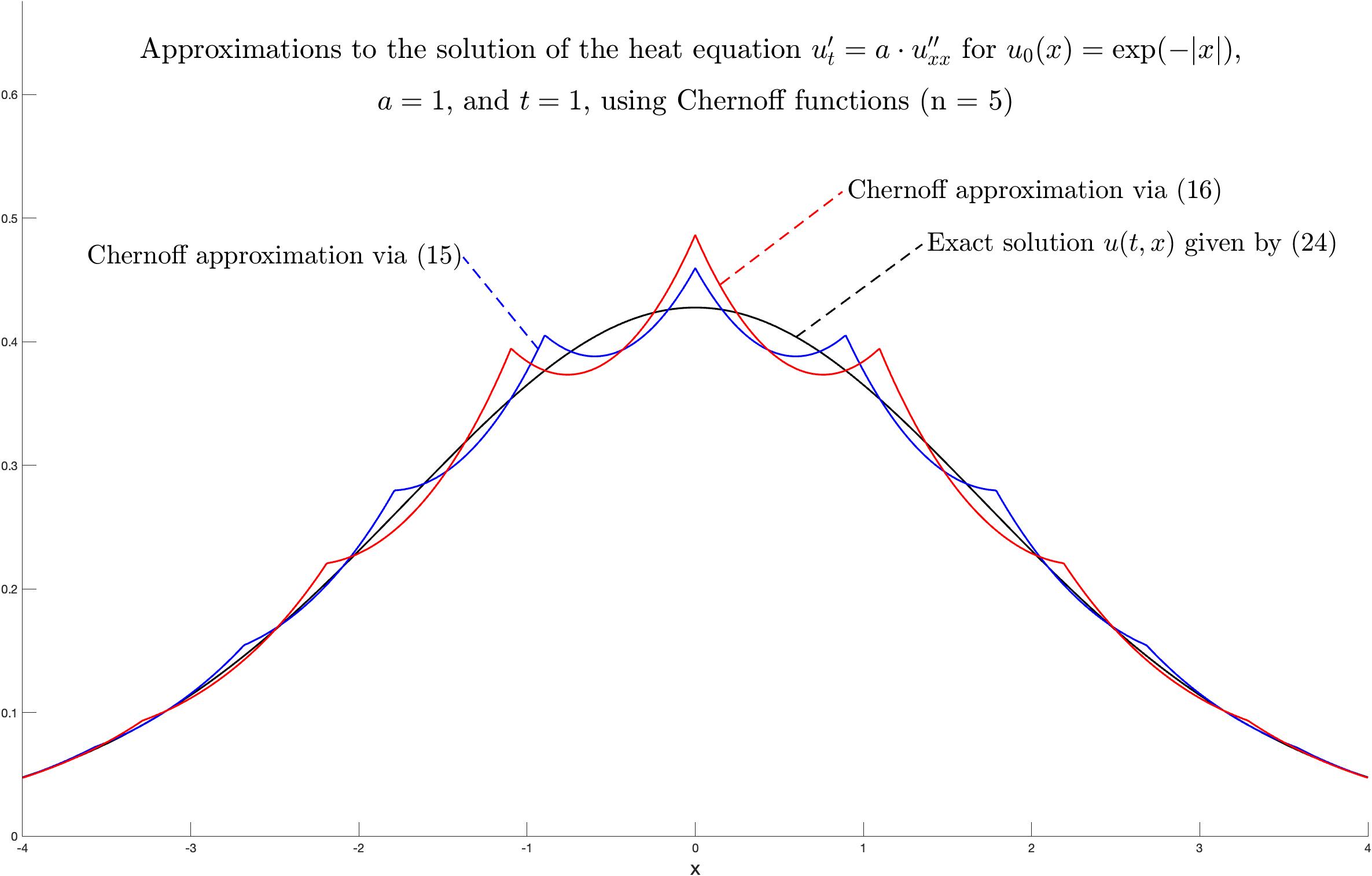}
    \caption{Graphs of approximations to the solution of the heat equation ($t = 1$, $n = 5$)}
    \label{fig:heat_exp_apprx_5}
\end{figure}

Figures \ref{fig:heat_exp_apprx_1} and \ref{fig:heat_exp_apprx_5} demonstrate two model examples of Chernoff approximations for the initial condition $u_0=[x\mapsto e^{-|x|}]$ and $t = 1$ with composition degree $n = 1$ and $n = 5$ respectively. 
\begin{figure}[H]
    \centering
    \includegraphics[width=0.8\textwidth]{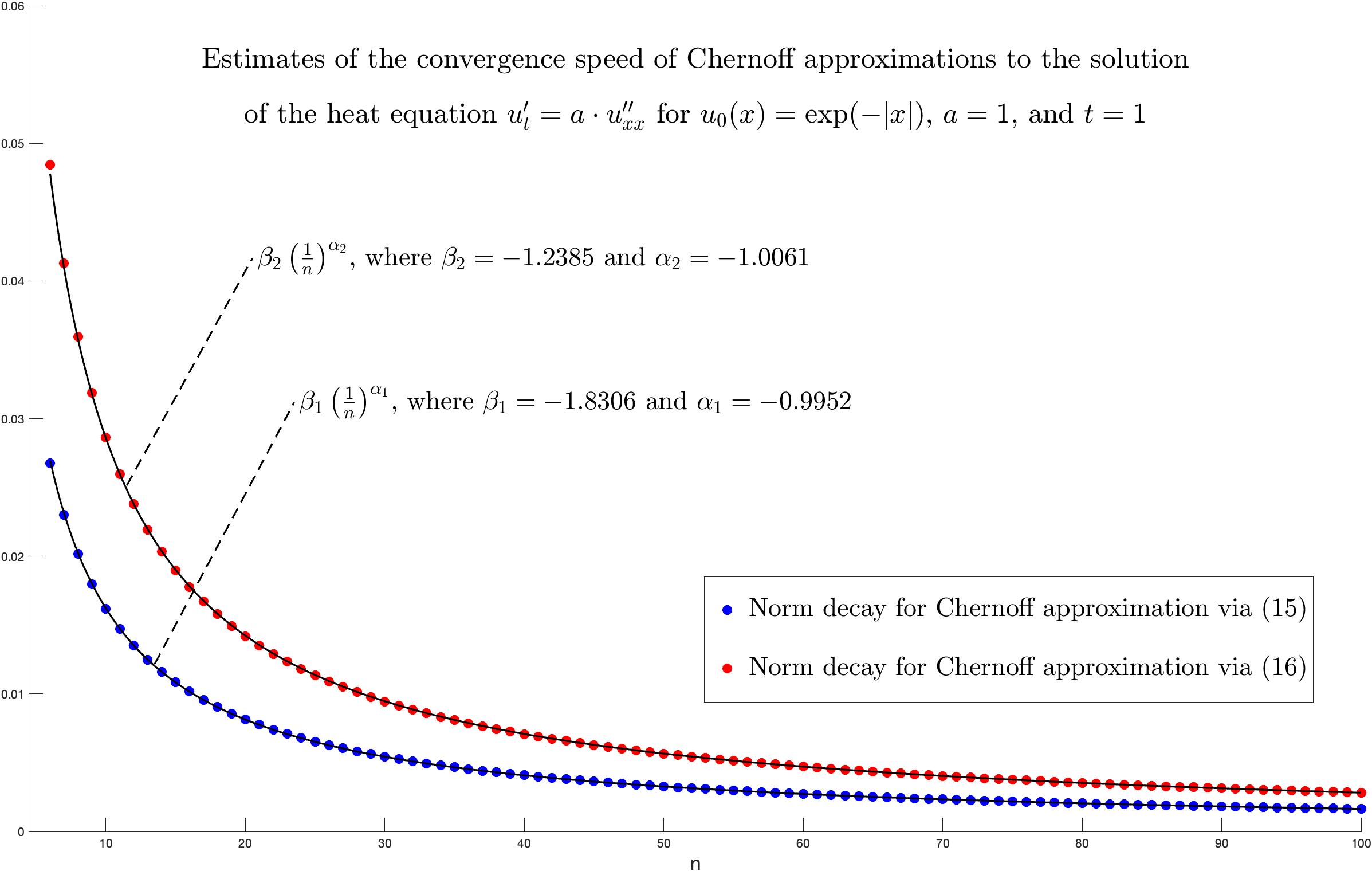}
    \caption{Graphs of estimates of the convergence speed ($t = 1$)}
    \label{fig:heat_exp_apprx_gen}
\end{figure}

Let us once again refer to the decaying property of the initial condition and the Chernoff approximations. We propose that the standard norm in $UC_b(\mathbb{R})$ is reached on the appropriately defined interval (e.g., $[-5, 5]$). So, Figure \ref{fig:heat_exp_apprx_gen} contains two graphs of convergence rate for $t = 1$.

\begin{figure}[H]
    \centering
    \includegraphics[width=0.8\textwidth]{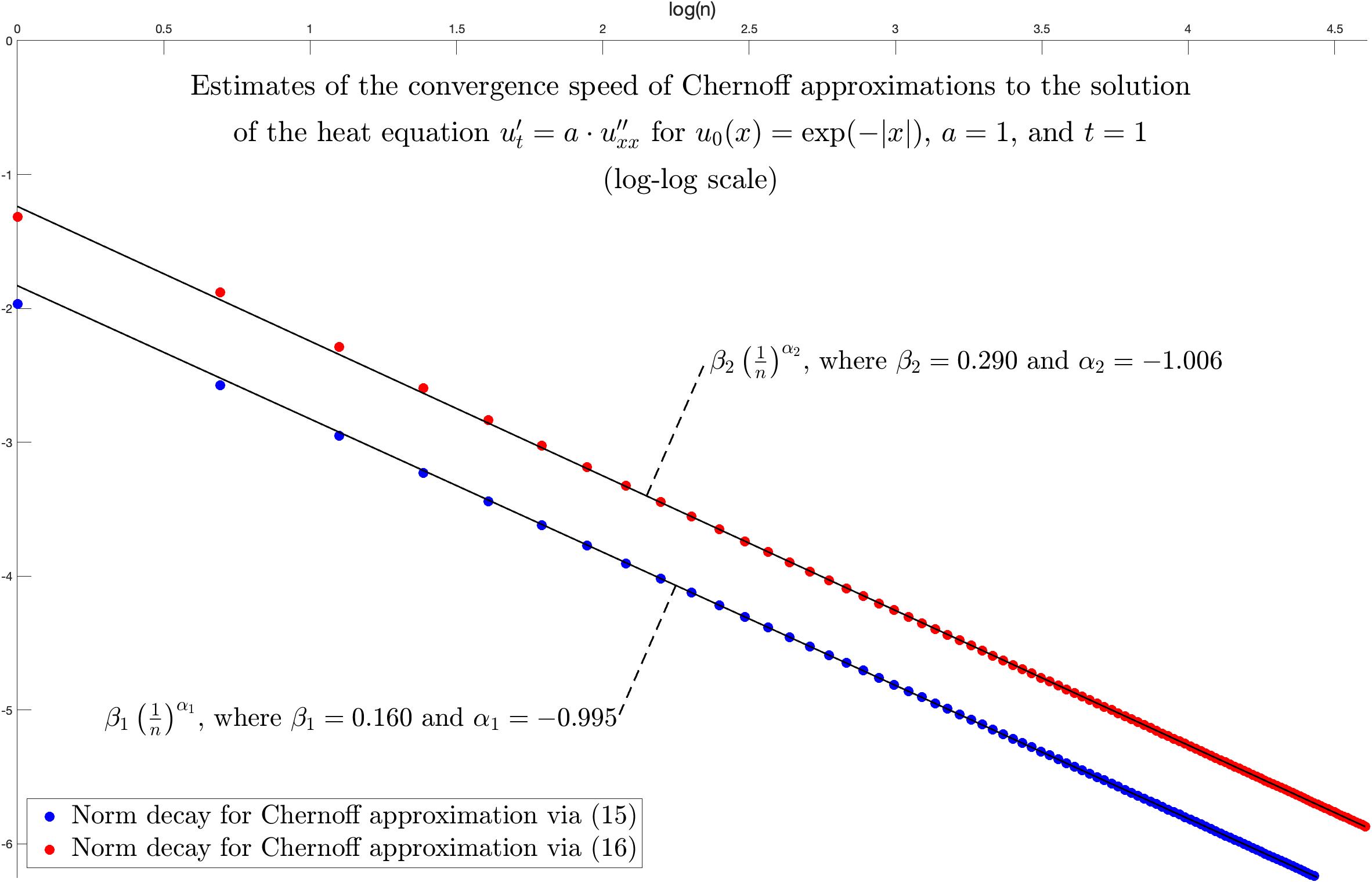}
    \caption{Graphs of estimates of the convergence speed with log-log scale ($t = 1$)}
    \label{fig:heat_exp_apprx_log}
\end{figure}

The above series of figures illustrates the results of a non-linear regression applied to analyse the approximation subspaces to which the examined initial condition belongs to. So, Figures \ref{fig:heat_exp_apprx_gen} and \ref{fig:heat_exp_apprx_log}, using both standard and logarithmic scale, represent the convergence rate and the corresponding obtained regression results for the heat equation.

One can easily derive now that the behaviour of the Chernoff approximations depends on the choice of initial condition, e.g., for $u_0 = [x\to\exp(-|x|)]$ we have it belonging to the approximation subspaces of the same order $\frac{1}{n}$ when applying expressions via different Chernoff functions \eqref{eq:chernoff_heat_first}, \eqref{eq:chernoff_heat_second}, and \eqref{eq:chernoff_heat_third}.

\bigskip
\textbf{Acknowledgements.} The author would like to express deep gratitude to his scientific supervisor I.D.~Remizov for the problem setting and attention to the research, and to O.E.~Galkin, and other members of research group 'Evolution semigroups and their applications' for useful critiques. The publication was prepared within the framework of the Academic Fund Program at HSE University in 2020–2021 (grant No.20-04-022, project Evolution semigroups and their applications) and within the framework of the Russian Academic Excellence Project '5-100'.

\printbibliography

\appendix
\section{Code for numerical experiments}
https://gitlab.com/tervenar/matlab

\end{document}